  \def\R{\mathbb R} 
\def\uu{\Upsilon}
\def\vu{\Psi}
\def\UU{\mathfrak{U}}
\def\om{\omega}
\def\Om{\Omega}
\def\aut#1{\operatorname{Aut}(#1)}
\def\supp{\operatorname{supp}}
\def\be{\beta}
\def\al{\alpha}
\def\ga{\gamma}
\def\ph{\varphi}
\def\de{\delta}
\def\ka{\kappa}
\def\lll{\Lambda}
\def\ggg{\Gamma}
\def\gbm{\ggg^{\scriptscriptstyle{BM}}}
\def\god{\ggg^{\scriptscriptstyle{OD}}}
\def\ode{{\wt{OD}}}
\def\bme{{\wt{BM}}}
\def\nfs/{NSF}
\def\cdp/{CDP}
\def\cdpz/{CDP${}_0$}
\def\cW{\mathcal{W}}
\def\cN{\mathcal{N}}
\def\cG{\mathcal{G}}
\def\ll{{\mathfrak{L}}}
\def\uu{\Upsilon}
\def\ww{{\mathfrak{W}}}
\def\vv{{\mathfrak{V}}}
\def\ppi{\varpi}
\def\vvbm{\vv_{BM}}
\def\vvbmi{\vvbm^*}
\def\vvr{\vv_{R}}
\def\ll{{\mathrm{L}}}
\def\pn{{\mathcal{N}}}
\def\sq#1#2{(#1)_{#2}}
\def\sqn#1{\sq{#1}{n\in\om}}
\def\sqnn#1{\sqn{#1_n}}
\def\cl#1{\overline{#1}}
\def\Int{\operatorname{Int}}
\def\bt{\operatorname{\beta}}
\def\ltu{\operatorname{\cl{lt}}}
\def\oms{\om^*}
\def\limp#1{{\textstyle \lim_{#1}}}
\def\es{\varnothing}
\def\Tau{{\mathcal T}}
\def\nom{{n\in\om}}
\def\sset#1{\{#1\}}
\def\fset#1{\{#1\}}
\def\set#1{\bbset#1\eeset}
\def\bbset#1:#2\eeset{\{#1\,:\,#2\}}
\def\bbsett#1:#2\eesett{\{#1\,:\,\text{#2}\}}
\def\iset#1{\ibbset#1\ieeset}
\def\ibbset#1:#2\ieeset{(#1)_{#2}}
\def\tp{\Tau}
\def\tps{\tp^*}
\def\ppi#1{\Pi(#1)}
\def\cP{{\mathcal P}}
\def\cB{{\mathcal B}}
\def\gP{{\mathfrak{P}}}
\def\pbase#1{\gP\left[#1\right]}
\def\cB{{\mathcal B}}
\def\cF{{\mathcal F}}
\def\cU{{\mathcal U}}
\def\cO{{\mathcal O}}
\def\cD{{\mathcal D}}
\def\Exp#1{\operatorname{Exp}(#1)}
\def\Expne#1{\operatorname{{Exp}}_*(#1)}
\def\gi{{\mathfrak{g}}}
\def\B{\mathbb D}
\def\til{\tilde}
\def\wt#1{\widetilde{#1}}
\def\ts{{\til{s}}}
\def\eqdef{\coloneqq}
\def\Ss{{\mathcal S}}
\def\cX{{\mathcal X}}
\def\cV{{\mathcal V}}
\def\diag{\mathop{\bigtriangleup}}
\def\St{\operatorname{St}}
\def\st{\operatorname{st}}
\def\fun{{}^\frown}
\def\OR{\bigvee}
\newcommand\restrA[2]{{% we make the whole thing an ordinary symbol
  \left.\kern-\nulldelimiterspace % automatically resize the bar with \right
  #1 % the function
  \vphantom{\big|} % pretend it's a little taller at normal size
  \right|_{#2} % this is the delimiter
  }}
\newcommand\restrB[2]{\ensuremath{\left.#1\right|_{#2}}}
\def\restr#1#2{\restrB{#1}{#2}}
\def\pwr#1_#2{#1^{[#2]}}
\def\D{\Delta}
\def\term#1{{\it #1}}
\def\alp{{\al\in P}}
\def\dddm#1(#2){N_{#1}(#2)}
\def\dddb#1(#2){B_{#1}(#2)}
\def\et(#1){}
\def\bitem#1,#2.{ $#2\nrightarrow #1$:\ }
\long\def\edemo#1\endedemo{{\rm #1}}
\def\myparagraph#1{\par\noindent{\emph{#1}}\,}
\newtheorem{assertion}{Assertion}
\newtheorem{proposition}{Proposition}
\newtheorem{theorem}{Theorem}
\newtheorem*{theorem*}{Theorem}
\newtheorem*{lemma*}{Lemma}
\newtheorem{example}{Example}
\theoremstyle{definition}
\newtheorem{definition}{Definition}
\newtheorem{problem}{Problem}
\theoremstyle{remark}
\newtheorem{note}{Remark}
\newtheorem*{note*}{Remark}
\begin{document}
%\maketitle
\begin{frontmatter}

\title{Classes of Baire spaces defined by topological games}

\author{Evgenii Reznichenko} 
\ead{erezn@inbox.ru}

\address{Department of General Topology and Geometry, Mechanics and  Mathematics Faculty, 
M.~V.~Lomonosov Moscow State University, Leninskie Gory 1, Moscow, 199991 Russia}

\begin{abstract}
The article studies topological games that arise in the study of the continuity of operations in groups with topology, such as paratopological and semitopological groups. These games are modifications of the Banach--Mazur game.

Given a two-player game $G(X)$ of the Banach--Mazur type, we define $\Gamma^G$-Baire, $\Gamma^G$-nonmeager and $\Gamma^G$-spaces. A space $X$ is a $\Gamma^G$-Baire if the second player does not have a winning strategy in $G(X)$. The classes of $\Gamma^G$-nonmeager spaces and $\Gamma^G$-spaces are defined similarly, with the help of modifications of the game $G(X)$.

For the games under consideration, equivalent games are found, which facilitates studying the relationship between the resulting classes of spaces and determining which spaces belong to these classes. For this purpose, we introduce a modification of the Banach--Mazur game with four players.

Results of this paper find application in the study the continuity of operations in groups with topology.
\end{abstract}
\begin{keyword}
Baire space
  \sep
nonmeager space
\sep
topological games
\sep
classes of Baire spaces
\sep 
\MSC[2010] 54B10 \sep 54C30 \sep 54C05 \sep 54C20 
\end{keyword}

\end{frontmatter}

%%% begin file('../en/in/intro1.tex') %%%
\section{Introduction}

A space $X$ is called \term{Baire} (\term{nonmeager}) if for any family $(U_n)_n$ of open dense subsets $X$ the intersection $G=\bigcap_n U_n$ is dense in $X$ (nonempty).

Baire spaces play an important role in mathematics. Particularly strong results have been obtained in the class of metric spaces.
We note the following two results, in which, in addition to being Baire, an important role is played by metrizability.
\begin{enumerate}
\item
If $X$, $Y$ and $Z$ are metric spaces,
$X$ is a Baire space and a function $f: X\times Y\to Z$ is separately continuous, then $f$ has points of continuity \cite{baire1899,namioka1974,christensen1981}.
\item
If $G$ is a metric Baire group with separately continuous multiplication, then $G$ is a topological group \cite{arh-rezn2005,cdp2010}.
\end{enumerate}

To extend these results from metric spaces to larger classes of spaces, topological games are widely used. An important role in applications of the Baire property is played by topological games that are modifications of the Banach--Mazur game \cite{Telegarsky1987,Revalski2004}, with the help of which a characterization of Baire spaces, the Banach--Oxtoby theorem,  was proved (see the Theorem \ref{tbmb-bm}): a space $X$ is Baire if and only if the second player $\be$ has no winning strategy in the game $BM(X)$.

A standard scheme for extending results of the first and second types from metric Baire spaces to larger classes  is as follows.
\begin{enumerate}
\item
A modification $\ggg(X)$ of the game $BM(X)$ is defined so that, for the class $\cB$ of spaces on which the player $\be$ in the game $\ggg(X)$ does not have a winning strategy, 
theorems known for metrizable Baire spaces remain valid.
\item
Spaces from the class $\cB$ are found. As a rule, these are Baire spaces from some 'traditional' class of spaces $\cP$.  Then, theorems that: if $X\in \cP$ is a Baire space, then $X\in\cB $ are proved.
\end{enumerate}

In this paper, the  class $\cB$ is  a subclass of the class of Baire spaces obtained by using a modification $G(X)$ of the Banach--Mazur game $BM(X)$. We refer to spaces in this class as $\ggg^B$-Baire spaces; see Section \ref{sec-bmb}.
A space $X$ is $\Gamma^B$-Baire if player $\beta$ does not have a winning strategy in $G(X)$.
The classes of $\Gamma^G$-nonmeager spaces and $\Gamma^G$-spaces are defined similarly, with the help of modifications of the game $G(X)$. 

If $X$ is a Baire space and $X$ is a $\ggg^G$-space, then $X$ is a $\ggg^G$-Baire space (Proposition \ref{pbms1}).
Exploring $\ggg^G$-spaces is much easier than $\ggg^G$-Baire spaces. Some of the $\ggg^G$-spaces are described in Theorem \ref{tbms2}. Proposition \ref{pbms1} and Theorem \ref{tbms2} allow us to find $\ggg^G$-Baire spaces. The author does not know if there is a $\ggg^G$-Baire space that is not a $\ggg^G$-space; see Problem \ref{pqe2} (1).

For the games under consideration, equivalent games are found, which facilitates studying the relationship between the resulting classes of spaces and determining which spaces belong to these classes. For this purpose, we introduce a modification of the Banach--Mazur game with four players; see Section \ref{sec-bme}.

The concept of a Baire space is closely related to the concept of a nonmeager space.
\begin{theorem}\label{tintro1}
\begin{itemize}
\item[\rm(1)]
Baire spaces are nonmeager.
\item[\rm(2)]
An open subset of a Baire space is a Baire space.
\item[\rm(3)]
A space $X$ is  Baire if and only if every open subspace of $X$ is a nonmeager space.
\item[\rm(4)]
A space $X$ is nonmeager if and only if there exists an open nonempty Baire subspace of $X$.
\item[\rm(5)]
A homogeneous nonmeager space is a Baire space.
\end{itemize}
\end{theorem}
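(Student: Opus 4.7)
The plan is to prove the five parts in sequence, each leveraging the earlier ones, assuming throughout that spaces are nonempty. Part (1) is immediate from the definitions: Baireness says countable intersections of dense opens are dense, and in a nonempty space dense subsets are nonempty.

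For (2), given an open $U \subseteq X$ and dense opens $V_n$ in $U$, I would promote each $V_n$ to $W_n := V_n \cup (X \setminus \overline{U})$, which is dense open in $X$; applying Baireness of $X$ and intersecting back with $U$ yields density of $\bigcap_n V_n$ in $U$. Part (3) forward then combines (2) with (1). For (3) backward, given dense opens $V_n \subseteq X$ and a nonempty open $U$, the sets $V_n \cap U$ are dense open in $U$, so nonmeagerness of $U$ forces $U \cap \bigcap_n V_n$ to be nonempty, hence $\bigcap_n V_n$ is dense in $X$. The backward direction of (4) is the same argument with $U$ Baire in place of $U$ nonmeager.

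The main obstacle is the forward direction of (4), which I would prove by contraposition. If no nonempty open subspace of $X$ is Baire, then by (3) every nonempty open of $X$ contains a nonempty open meager subset. Zorn's lemma then yields a maximal pairwise disjoint family $\mathcal V$ of nonempty open meager subsets; maximality forces $V := \bigcup \mathcal V$ to be dense in $X$. The heart of the argument is showing $V$ itself is meager. Writing each $W \in \mathcal V$ as $W = \bigcup_n N_{W,n}$ with $N_{W,n}$ nowhere dense in $X$, I set $A_n := \bigcup_{W \in \mathcal V} N_{W,n}$. The disjointness of $\mathcal V$ is used crucially: inside any nonempty open $O \subseteq X$ meeting some $W$, nowhere-density of $N_{W,n}$ produces a nonempty open subset of $O \cap W$ disjoint from $N_{W,n}$, and disjointness of $\mathcal V$ ensures this subset avoids every $N_{W',n}$ with $W' \neq W$. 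Hence each $A_n$ is nowhere dense and $V = \bigcup_n A_n$ is meager. Since $V$ is also open and dense, $X \setminus V$ is closed with empty interior, hence nowhere dense, so $X = V \cup (X \setminus V)$ is itself meager, contradicting the hypothesis.

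Part (5) then follows quickly: by (4), $X$ has a nonempty open Baire subspace $U$, and by homogeneity its images under self-homeomorphisms cover $X$. It remains to verify that any space covered by open Baire subspaces is Baire. Given dense opens $V_n$ in $X$ and a nonempty open $O$, pick a cover element $B$ meeting $O$; the set $O \cap B$ is a nonempty open subspace of the Baire space $B$, hence Baire by (2), so $\bigcap_n V_n \cap O \cap B$ is dense in $O \cap B$ and in particular nonempty, showing that $\bigcap_n V_n$ meets $O$.
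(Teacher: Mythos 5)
Your proof is correct. Note that the paper itself offers no proof of this theorem: it is stated in the introduction as classical background (the substantive part, the forward direction of (4), is essentially the Banach category theorem), so there is no "paper's approach" to compare against. Your argument is the standard one and all five parts check out: the promotion $W_n = V_n \cup (X\setminus\overline U)$ in (2) is dense open because an open set disjoint from $U$ is automatically disjoint from $\overline U$; the Zorn/disjoint-family argument in (4) correctly exploits disjointness of $\mathcal V$ to make each $A_n$ nowhere dense; and the covering argument in (5) is sound. One small expositional point worth making explicit: in (4) you pass from "$W$ is a meager space" (meager in itself, which is what the negation of (3) delivers) to a decomposition $W=\bigcup_n N_{W,n}$ with $N_{W,n}$ nowhere dense \emph{in $X$}; this is legitimate precisely because $W$ is open in $X$, so a set nowhere dense in $W$ is nowhere dense in $X$, but that half-line of justification should be stated rather than assumed.
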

In the article, generalizations of Baire and nonmeager spaces are constructed in parallel, and the relationships (1)--(5) between Baire and nonmeager spaces are checked.

In \cite{rezn2008,rezn2022-2008} the $\D$-Baire property was found, which implies the continuity of operations in groups. 
The $\D$-Baire property is defined with the help of semineighborhoods of the diagonal. 
Paper \cite{rezn2022gbd} also contains properties of Baire type, which are defined by using semineighborhoods of the diagonal. 
It establishes a relationship between the generalizations of the Baire property obtained with the help of topological games in this paper and 
those 
obtained with the help of 
semineighborhoods of the diagonal.

The results of this paper are used in \cite{rezn2022-1} to study the continuity of group operations in right-topological groups.

%%% end file('../en/in/intro1.tex') %%%
%%% begin file('../en/in/defs.tex') %%%
\section{Definitions and notation}

The sign $\eqdef$ will be used for equality by definition.

\subsection{Definitions and notation from set theory}

The family of all subsets of a set $X$ is denoted by $\Exp X$.
The family of all nonempty subsets of a set $X$ is denoted by $\Expne X$: $\Expne X\eqdef\Exp X \setminus \fset{\es}$.

If $B$ is a subset of a set $A$, then we denote by $B^c=A\setminus B$ the complement to $A$. We use this notation in situations where it is clear from the context which set $A$ is meant.

An \term{indexed set} $x=\iset{x_\al:\al\in A}$ is a function on $A$ such that $x(\al)=x_\al$ for $\al\in A$.
If the elements of an indexed set $\cX=\iset{X_\al: \al\in A}$ are themselves sets, then $\cX$ is also called an \term{indexed family of sets}; $\cX$ is a function on $A $: $\cX(\al)=X_\al$ for $\alp$.
For a nonempty $B\subset A$, we denote
\[
\pwr\cX_B\eqdef\prod_{\al\in B}X_\al=\set{\iset{x_\al:\al\in B}:x_\al\in X_\al\text{ for all }\al\in B}
\]
The projection from $\pwr\cX_B$ onto $X_\al$ will be denoted by $\pi_\al$.
We assume that $\pwr\cX_B=\sset\es$ if $B$ is the empty set:
\[
\pwr\cX_{\es}\eqdef \sset\es.
\]
The Cartesian product $\pwr\cX_B$ is the set of functions $f$ defined on the set $B$ such that $f(\al)\in \cX(\al)$ for all $\al \in B$.
We denote
\[
\prod\cX\eqdef\prod_{\al\in A}X_\al=\pwr \cX_A.
\]

Let $B\cap C=\es$.
As is customary in set theory, we identify a function with its graph.
If $x\in \pwr\cX_B$ and $y\in\pwr\cX_C$, then $z=x\cup y$ is the function defined by 
\[
z\in \pwr\cX_{B\cup C},\
x=\restr zB\text{ and }y=\restr zC
\]
Let us introduce a special notation for $x\cup y$ when $x$ and $y$ are functions:
\[
x\fun y \eqdef x\cup y.
\]
Functions with a finite domain are sets of the form
\[
f=\{(x_1,y_1),(x_2,y_2),\dots ,(x_n,y_n)\},
\]
$f(x_i)=y_i$ for $i=1,2,\dots ,n$.
We will use the notation
\[
\{x_1\to y_1, x_2\to y_2,\dots ,x_n\to y_n\} \eqdef
\{(x_1,y_1),(x_2,y_2),\dots ,(x_n,y_n)\}.
\]
In particular,
\begin{align*}
\{\al\to a\}&=\sset{(\al,a)},
&
\{\al\to a,\be\to b\}&=\sset{(\al,a),(\be,a))}.
\end{align*}

\subsection{Definitions and notation from topology}

We denote by $\aut X$ the set of all homeomorphisms of the space $X$ onto itself.

A subset $M$ of a topological space $X$ is called \term{locally dense}, or \term{nearly open}, or \term{preopen} if $M\subset \Int \cl M$.

Let $M\subset X$. If $M$ is the union of a countable number of nowhere dense sets, then $M$ is called a \term{meager} set\et(meager set). Nonmeager sets are called \term{sets of the second Baire category}.
A subset of $M$ is said to be \term{residual}, or \term{comeager}, if $X\setminus M$ is a meager set.

A space $X$ is called \term{a space of the first Baire category}, or a \term{meager spaces}, if 
the set $X$ is of the first Baire category in the space $X$.
 A space $X$ is called \term{a space of the second Baire category}, or \term{nonmeager spaces}, if $X$ is not a meager space. A space in which every residual set is dense is called a \term{Baire space}.
A space is nonmeager if and only if some open subspace is a Baire space.

A family $\nu$ of nonempty subsets of $X$ is called a \term{$\pi$-net} if for any open nonempty $U\subset X$ there exists an $M\in\nu$ such that $M\subset U$.

A $\pi$-network consisting of open sets is called a \term{$\pi$-base}.

A subset $U\subset X$ is said to be \term{regular open} if $U=\Int{\cl U}$.

A space $X$ is called \term{quasi-regular} if for every nonempty open $U\subset X$ there exists a nonempty open $V\subset X$ such that $\cl V\subset U$.

A space $X$ is \term{semiregular} if $X$ has a base consisting of regular open sets.

A space $X$ is called \term{$\pi$-semiregular} \cite{Ravsky2001} (or nearly regular \cite{Ameen2021}) if $X$ has a $\pi$-base consisting of regular open sets.

For a cardinal $\tau$, a set $G\subset X$ is called a \term{set of type $G_\tau$} if $G$ is an intersection of $\tau$ open sets. A space $X$ is called \term{an absolute $G_\tau$ space} if $X$ is of type $G_\tau$ in some compact extension.

A space \term{$X$ is regular at a point $x\in X$} if for any neighborhood $U$ of the point $x$ there exists a neighborhood $V\ni x$ such that $\cl V\subset U$.

A space \term{$X$ is semiregular at a point $x\in X$} if there is a base at the point $x$ consisting of regular open sets.

A space $X$ is \term{feebly compact} if any locally finite family of open sets is finite.

For $\ga\subset \Exp X$ and $x\in X$ we denote
\begin{align*} 
\St(x,\ga)&\eqdef\set{U\in\ga: x\in \ga},
&
\st(x,\ga)&\eqdef\bigcup\St(x,\ga).
\end{align*}

A space $X$ is called \term{developable}\et(developable) if there exists a sequence of open covers $(\ga_n)_{n\in\om}$ such that for any $x\in X$ the family $\st(x,\ga_n)$ is a base at the point $x$.

A family $\cB$ of open nonempty sets in $X$ is called \term{an outer base of $M\subset X$} if $M\subset U$ for each $U\in\cB$ and for each open $W\supset M$ there exists a $U\in \cB$ such that $M\subset U\subset W$.

If $\sqnn M$ is a sequence of subsets of a space $X$, then the set
\begin{align*}
\ltu_{n\in\om} M_n = \{x\in X:\ &|\set{n\in\om: U\cap M_n\neq\es}|=\om
\\
&\text{ for any neighborhood }U\text{ of }x\}
\end{align*}
is called the \term{upper limit of the sequence of sets $\sqnn M$}.

If $\sqnn x$ is a sequence of points in the space $X$, then we denote
\begin{align*}
\ltu_{n\in\om} x_n & \eqdef \ltu_{n\in\om} \sset{x_n}.
\end{align*}

We denote by $\bt \om$ the space of ultrafilters on $\om$, the Stone-\v{C}ech extension of the discrete space $\om$. We denote by $\oms=\bt\om\setminus \om$ --- the set of nonprincipal ultrafilters.

Let $\sqnn x$ be a sequence of points in a space $X$, and let $p\in \oms$ be a nonprincipal ultrafilter. A point $x\in X$ is called the \term{$p$-limit} of a sequence $\sqnn x$ if $\set{n\in\om: x_n\in U}\in p$ for any neighborhood $U$ of $x$. We will write $x=\limp p x_n=\limp p \sqnn x$ for the $p$-limit $x$.

%%% end file('../en/in/defs.tex') %%%
%%% begin file('../en/in/bmg.tex') %%%
\section{Modifications of the Banach-Mazur game}\label{sec-bmg}

In this section, we use topological games; the basic concepts and terminology for them can be found in \cite{Oxtoby1957,tel-gal-1986,arh2010,rezn2008,rezn2022-2008}. 
A precise definition of a game  is given in Section \ref{sec-bme}.
In this section, we assume that there are two players, $\al$ and $\be$.
Let $G$ be a game in which a player $\ka\in\sset{\al,\be}$ has a winning strategy.
Let us call this game
 $G$ \term{$\ka$-favorable}. If there is no such strategy, then $G$ is a \term{$\ka$-unfavorable} game.

If the definition of the game $G$ depends on only one parameter, namely, some space $X$, that is, $G=\ggg(X)$, then we say that the \term{space $X$ is $(\ka,\ggg)$-favorable} if the game $\ggg(X)$ is $\ka$-favorable and  the space $X$ is \term{$(\ka,\ggg)$-unfavorable} if the game is $\ggg(X)$  is $\ka$-unfavorable.

Let $G_1$ and $G_2$ be two games with players $\al$ and $\be$. We say that the games $G_1$ and $G_2$ are \term{equivalent} if the game $G_1$ is $\ka$-favorable if and only if the game $G_2$ is $\ka$-favorable for all $\ka\in\sset {\al,\be}$. We will write $G_1\sim G_2$ for equivalent games.

Let $(X,\tp)$ be a space. We set $\tps=\tp\setminus\sset\es$ and denote
\begin{align*}
\vv(X)&\eqdef \set{\sqnn V\in {\tps}^\om: V_{n+1}\subset V_n\text{ for }\nom}.
\end{align*}
We put
\begin{align*}
\UU(X)&\eqdef \set{\uu\in (\Expne{\tps})^{\tps}: \uu(U)\text{ is a $\pi$-base in }U\in\tps}.
\end{align*}
Let $\cP$ be some $\pi$-base of the space $X$. Let us define $\uu_t(X),\uu_r(X),\uu_p(X,\cP)\in (\Exp{\tps})^{\tps}$ as follows. For $U\in\tps$ we put
\begin{align*}
\uu_t(X)(U)&=\set{V\in\tps: V\subset U},
&
\uu_p(X,\cP)(U)&=\set{V\in\cP: V\subset U},
\\
\uu_r(X)(U)&=\set{V\in\tps: \cl V\subset U},
&
\uu_{pr}(X,\cP)(U)&=\set{V\in\cP: \cl V\subset U}.
\end{align*}
Obviously, $\uu_t(X),\uu_p(X,\cP)\in\UU(X)$, and if the space $X$ is quasiregular, then $\uu_r(X),\uu_{pr}(X,\cP)\in\UU(X)$.

\paragraph{Games $BM(X,\cV;\uu,\vu)$ and $MB(X,\cV;\uu,\vu)$}
Let $\cV\subset \vv(X)$ and $\uu,\vu\in\UU(X)$.
There are two players, $\al$ and $\be$. These games differ in the first move of a player $\al$.
On the first move, player $\al$ chooses $U_0=X$ in the game $BM(X,\cV;\uu,\vu)$ and $U_0\in\uu(X)$ in the game $MB(X,\cV;\uu,\vu)$. Player $\be$ chooses $V_0\in\vu(U_0)$. On the $n$th move, $\al$ chooses $U_n\in\uu(V_{n-1})$ and $\be$ chooses $V_n\in\vu(U_n)$. After a countable number of moves, the winner is determined: player $\al$ wins if $\sqnn V\in\cV$.

We put
\begin{align*}
BM(X,\cV) &\eqdef BM(X,\cV;\uu_t(X),\uu_t(X)),
\\
MB(X,\cV) &\eqdef MB(X,\cV;\uu_t(X),\uu_t(X)).
\end{align*}

\begin{definition}
Let $X$ be a space. A family $\cV\subset \vv(X)$ is called \term{monolithic}\et(monolithic) if the following condition is satisfied:
\begin{itemize}
\item[{}] Let $\sqnn U\in \vv(X)$ and $\sqnn V\in \cV$. If $U_{n+1}\subset V_n \subset U_n$ for $\nom$, then $\sqnn U\in \cV$.
\end{itemize}
\end{definition}
\begin{note}
The paper
\cite{arh2010} introduced a similar concept of a stable family, and \cite{tel-gal-1986} introduced the concept of a monotone family.
A monotone family is stable and monolithic.
The reason for introducing a new class of monolithic families is that $\vvbm(X)$ (see~Section \ref{sec-bmb}) is a monolithic, but not  monotone or stable family.
\end{note}

\begin{proposition}\label{pbmg1}
Let $X$ be a space, $\cV\subset \vv(X)$, $\uu,\vu\in\UU(X)$.
%Then \begin{align*} BM(X,\cV)&\sim BM(X,\cV;\uu), & MB(X,\cV)&\sim MB(X,\cV;\uu) . \end{align*}
If $\cV$ is a monolithic family, then
\begin{align*}
BM(X,\cV)&\sim BM(X,\cV;\uu,\vu),
&
MB(X,\cV)&\sim MB(X,\cV;\uu,\vu).
\end{align*}
\end{proposition}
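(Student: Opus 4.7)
The plan is to establish both equivalences simultaneously by a common simulation argument; the $MB$ case differs from the $BM$ case only in $\al$'s first move and is handled identically. The key tool is that $\uu(U)$ and $\vu(U)$ are $\pi$-bases in any non-empty open $U\subseteq X$, so every open non-empty $W\subseteq U$ contains an element of $\uu(U)$ and an element of $\vu(U)$; this allows me to refine moves on the fly to satisfy the restricted game's constraints.

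There are four transfers to check (each player, each direction). Given a winning strategy in one game, I run two parallel plays, producing sequences $(U_n,V_n)$ in $BM(X,\cV;\uu,\vu)$ and $(U'_n,V'_n)$ in $BM(X,\cV)$. Two transfers are direct: the $\al$-strategy simple $\to$ restricted and the $\be$-strategy restricted $\to$ simple. In each, I refine only $\al$'s moves (shrinking the strategy's suggestion $U'_n$ to some $U_n\in\uu(V_{n-1})$ with $U_n\subseteq U'_n$), while the opponent's moves are copied verbatim, $V_n=V'_n$. Since the winning condition depends only on $(V_n)$, it transfers at once, and no hypothesis on $\cV$ is needed.

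The other two transfers --- the $\al$-strategy restricted $\to$ simple and the $\be$-strategy simple $\to$ restricted --- are where monolithicity enters. In each, $\be$'s moves coming from the "free" game must also be refined to fit the $\pi$-base $\vu$, producing the interleaved inclusion chain
\[
V'_0\supseteq V_0\supseteq U_1=U'_1\supseteq V'_1\supseteq V_1\supseteq U_2=U'_2\supseteq V'_2\supseteq V_2\supseteq\cdots,
\]
and in particular $V'_{n+1}\subseteq V_n\subseteq V'_n$ for every $n$. Setting $\widetilde U_n:=V'_n$ and $\widetilde V_n:=V_n$ satisfies the hypothesis $\widetilde U_{n+1}\subseteq\widetilde V_n\subseteq\widetilde U_n$, so monolithicity yields the implication $(V_n)\in\cV\Rightarrow(V'_n)\in\cV$. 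For the $\al$-transfer this implication is exactly what the simulated strategy needs; for the $\be$-transfer its contrapositive is what is needed.

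The main obstacle is not a single deep step but the bookkeeping --- correctly identifying in each of the four transfers which moves are copied verbatim and which must be refined, and recognising that monolithicity (weaker than monotonicity or stability, as the preceding remark notes) is precisely the minimal hypothesis under which the interleaved chain collapses to a winning sequence for the correct player.
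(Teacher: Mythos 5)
Your proof is correct, but it reaches the result by a genuinely different route from the paper's. The paper proves Proposition \ref{pbmg1} by reduction: via Proposition \ref{pbmg3} the games $BM$ and $MB$ are recast as $OD$/$DO$ games with $\cW=\ww_v(X,\cV)$ (whose monolithicity is equivalent to that of $\cV$), and the claim then follows from Proposition \ref{pbmg2}, which is itself proved only in Section \ref{sec-bme} by means of the four-player games $\ode(X,\pn,\cW;\Om)$ (Assertions \ref{abme4} and \ref{abme5}, Theorem \ref{tbme1}), where it is packaged as the statement that the coalition $\sset{\ga,\de}$ supplying the $\pi$-bases is a dummy coalition. The combinatorial core there is the same as yours --- two interleaved plays, moves refined into the prescribed $\pi$-bases, and monolithicity applied to the nested chain $V'_{n+1}\subset V_n\subset V'_n$ --- but your direct two-player simulation is more elementary and self-contained, whereas the paper's detour handles the $OD$/$DO$ games (with $\pi$-nets and the marks $M_n$) in the same stroke and reuses the four-player machinery elsewhere. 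Your identification of which two of the four transfers need monolithicity, and of the implication versus its contrapositive, is exactly right. Two points to make explicit in a final write-up: fix once and for all a choice function selecting, for each nonempty open $W$ contained in a set where a $\pi$-base is prescribed, a member of that $\pi$-base inside $W$ (the paper's map $\lll$ plays this role), so that the auxiliary play is determined by the visible history and your simulated strategies are genuine strategies; and note that the refinement of $\be$'s first move $V_0\subset V'_0$ uses that $\vu(U_0)$ is a $\pi$-base in $U_0$, i.e.\ precisely the hypothesis $\vu\in\UU(X)$.
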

\begin{note}
Proposition \ref{pbmg1} will be proved after Proposition \ref{pbmg3}. Proposition \ref{pbmg1} allows one to pass from the game $BM(X,\cV)$ to the game $BM(X,\cV;\uu,\vu)$ in which the players choose open sets not arbitrarily, but in some special way, for example, from some convenient $\pi$-base.
\end{note}

Let
\begin{align*}
\ww(X)\eqdef\{ \sqn{V_n,M_n}\in (\tps\times\Expne X)^\om:&\ M_{n+1}\subset V_n\text{ and }
\\
&V_{n+1}\subset V_n
\text{ for }\nom
\}.
\end{align*}

%----------------------------------------------------------------------

\paragraph{Games $OD(X,\pn,\cW;\uu,\vu)$ and $DO(X,\pn,\cW;\uu,\vu)$}
Let $\pn$ be a $\pi$-net of $X$. Take $\cW\subset \ww(X)$ and $\uu,\vu\in\UU(X)$.
There are two players, $\al$ and $\be$. These games are distinguished by the first move of player $\al$.
On the first move, $\al$ chooses $U_0=X$ in $OD(X,\pn,\cW;\uu,\vu)$ and $U_0\in\uu(X)$ in $DO( X,\pn,\cW;\uu,\vu)$. Player $\be$ chooses $V_0\in\vu(U_0)$ and $M_0\in\pn$, $M_0\subset U_0$. On $n$th move $\al$ chooses $U_n\in\uu(V_{n-1})$ and $\be$ chooses $V_n\in\vu(U_n)$ and $M_n\in\pn $, $M_n\subset U_n$. After a countable number of moves, the winner is determined: player $\al$ wins if $\sqn{V_n,M_n}\in\cW$.

We put
\begin{align*}
OD(X,\pn,\cW) &\eqdef OD(X,\pn,\cW;\uu_t(X),\uu_t(X)),
\\
DO(X,\pn,\cW) &\eqdef DO(X,\pn,\cW;\uu_t(X),\uu_t(X)).
\end{align*}

\begin{definition}
Let $X$ be a space. A family $\cW\subset\ww(X)$ is called \term{monolithic}\et(monolithic) if the following condition is satisfied:
\begin{itemize}
\item[{}] Let $\sqnn U\in \vv(X)$ and $\sqn {V_n,M_n}\in \cW$. If $U_{n+1}\subset V_n \subset U_n$ for $\nom$, then $\sqn {U_n,M_n}\in\cW$.
\end{itemize}
\end{definition}

\begin{proposition}\label{pbmg2}
Let $X$ be a space, $\pn$ be a $\pi$-net of $X$, $\cW\subset \ww(X)$, $\uu,\vu\in\UU(X)$.
% Then \begin{align*} OD(X,\pn,\cW)&\sim OD(X,\pn,\cW;\uu), & DO(X,\pn,\cW)&\sim DO (X,\pn,\cW;\uu). \end{align*}
If $\cW$ is a monolithic family, then
\begin{align*}
OD(X,\pn,\cW)&\sim OD(X,\pn,\cW;\uu,\vu),
\\
DO(X,\pn,\cW)&\sim DO(X,\pn,\cW;\uu,\vu).
\end{align*}
\end{proposition}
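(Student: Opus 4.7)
My plan is to mirror the scheme used for Proposition \ref{pbmg1}, since the game $OD$ differs from $BM$ only by the extra component $M_n\in\pn$ that $\be$ plays each round, and the monolithic condition for $\cW$ leaves the $M_n$'s undisturbed, so they can be transported verbatim between an unrestricted play and a simulated restricted play (and vice versa). For each of $OD$ and $DO$ I need to establish four implications: $\al$-favorability transfers in both directions, and likewise for $\be$-favorability.

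Two of the implications need no monolithicity. For the $\al$ direction from unrestricted to restricted, let $\sigma$ be a winning strategy for $\al$ in $OD(X,\pn,\cW)$; I would define $\sigma'$ in the restricted game by reading $\be$'s actual moves $(V_n',M_n')$ directly as simulated unrestricted moves $\tilde V_n\eqdef V_n'$ and $\tilde M_n\eqdef M_n'$ (legal because $V_n'\subset U_n'\subset\tilde U_n$), computing $\tilde U_{n+1}\eqdef\sigma(\tilde V_0,\tilde M_0,\ldots,\tilde V_n,\tilde M_n)$, and then using the $\pi$-base property of $\uu(V_n')$ to pick $U_{n+1}'\in\uu(V_n')$ with $U_{n+1}'\subset\tilde U_{n+1}$; the actual sequence $\sqn{V_n',M_n'}$ coincides with the simulated one, which lies in $\cW$. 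Symmetrically, a winning strategy $\tau'$ for $\be$ in the restricted game yields a $\tau$ for $\be$ in the unrestricted game by simulating a restricted play that refines $\al$'s actual $U_n$ down to $\tilde U_n\in\uu(\tilde V_{n-1})$ with $\tilde U_n\subset U_n$, and letting $\be$ play exactly $V_n\eqdef\tilde V_n$ and $M_n\eqdef\tilde M_n$; the actual and simulated non-winning conditions again coincide.

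The remaining two implications invoke monolithicity of $\cW$. Assume $\sigma'$ is winning for $\al$ in $OD(X,\pn,\cW;\uu,\vu)$. In the unrestricted game, after $\be$'s move $(V_n,M_n)$, select $\tilde V_n\in\vu(\tilde U_n)$ with $\tilde V_n\subset V_n$ from the $\pi$-base, put $\tilde M_n\eqdef M_n$, and let $\al$ play $U_{n+1}\eqdef\tilde U_{n+1}=\sigma'(\ldots)$. The inclusions $V_{n+1}\subset U_{n+1}=\tilde U_{n+1}\subset\tilde V_n$ and $\tilde V_n\subset V_n$ yield the sandwich $V_{n+1}\subset\tilde V_n\subset V_n$, so applying the monolithic condition with its $U$ role played by $V$ and its $V$ role played by $\tilde V$ to $\sqn{\tilde V_n,\tilde M_n}\in\cW$ produces $\sqn{V_n,M_n}\in\cW$. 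The analogous implication for $\be$ is by contradiction: if the actual $\sqn{V_n',M_n'}$ were in $\cW$, the sandwich $\tilde V_{n+1}\subset V_n'\subset\tilde V_n$, obtained from $\tilde V_{n+1}\subset\tilde U_{n+1}=U_{n+1}'\subset V_n'$ and $V_n'\subset\tilde V_n$, would by monolithicity force $\sqn{\tilde V_n,\tilde M_n}\in\cW$ and contradict the winning strategy $\tau$.

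The $DO$ variants go through by the identical argument, with the only change that round zero is a round like any other: $\al$'s initial move $U_0\in\uu(X)$ needs the same $\pi$-base refinement as later rounds instead of being forced to $X$. The main obstacle I anticipate is bookkeeping, namely verifying at every stage that the chosen $\tilde V_n$, $U_{n+1}'$, and so on are legitimate $\pi$-base selections because the relevant inclusions exhibit the target as an open nonempty subset of the set whose $\pi$-base is used, and checking that the ultimate sandwich chain is oriented in the direction $U_{n+1}\subset V_n\subset U_n$ required by the monolithic condition on $\cW$.
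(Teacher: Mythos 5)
Your argument is correct: all four strategy transfers go through, the $\pi$-base refinements are legitimate at every stage (including move zero of $DO$), and in the two directions that need it the sandwich chains $V_{n+1}\subset\tilde V_n\subset V_n$ and $\tilde V_{n+1}\subset V_n'\subset\tilde V_n$ are oriented exactly as the monolithicity of $\cW$ requires, with the marks $M_n$ carried along unchanged (the only implicit point is that the ``pick some element of the $\pi$-base inside the target'' steps should be made by a choice function fixed in advance so that the constructed strategies are genuine functions of the visible history; the paper does this with its map $\lll$). Your route is, however, organized quite differently from the paper's. The paper does not prove Proposition \ref{pbmg2} directly: it defers the proof to Proposition \ref{pbme2}, which is extracted from a four-player game $\ode(X,\pn,\cW;\Om)$ in which two additional ``nature'' players $\ga$ and $\de$ choose, at each step, the $\pi$-bases from which $\al$ and $\be$ must then pick their sets. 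The technical core there (Assertions \ref{abme4} and \ref{abme5}) is the same interlacing-plus-monolithicity device you use, but it is packaged as the statement that the coalition $\sset{\ga,\de}$ is a dummy coalition (Theorems \ref{tbme1} and \ref{tbme1+1}); the equivalence of $OD(X,\pn,\cW)$ with $OD(X,\pn,\cW;\uu,\vu)$ then falls out by identifying each game with the four-player game under a particular coalition strategy (Assertions \ref{abme8} and \ref{abme9}) and invoking Proposition \ref{pbme1-1}(3). Your direct two-player simulation, decomposed into four explicit implications, is more elementary and self-contained; the paper's detour through the coalition formalism buys a single statement covering all pairs $(\uu,\vu)$ and all first-move sets $\Om$ at once, and that machinery is reused later for Theorem \ref{tbme2} and Proposition \ref{pbme3}, which your ad hoc argument would not supply.
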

Proposition \ref{pbmg2} will be proved later (see Proposition \ref{pbme2}).

Let $\cV\subset \vv(X)$. We put
\begin{align*}
\ww_v(X,\cV)\eqdef\{ \sqn{V_n,M_n}\in \ww(X):&\ \sqnn V\in\cV \}.
\end{align*}

\begin{proposition}\label{pbmg3}
If $X$ is a space, $\cV\subset \vv(X)$, $\pn$ $\pi$-net of $X$, $\cW=\ww_v(X,\cV)$ and $\uu,\vu\in\UU(X)$, then
\begin{align*}
BM(X,\cV;\uu,\vu)&\sim OD(X,\pn,\cW;\uu,\vu),
&
MB(X,\cV;\uu,\vu)&\sim DO(X,\pn,\cW;\uu,\vu).
\end{align*}
\end{proposition}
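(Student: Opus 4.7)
The plan is to translate strategies back and forth between the two games. Two observations drive the proof: (i) the winning condition $\cW=\ww_v(X,\cV)$ depends only on the sequence $\sqnn V$, so the auxiliary sequence $\sqnn M$ played by $\be$ in $OD$ (resp.\ $DO$) is irrelevant to who wins; and (ii) since $\pn$ is a $\pi$-net, for every nonempty open $U\subset X$ there is some $M\in\pn$ with $M\subset U$, so a legal choice $M_n\subset U_n$ is always available. Note that the game rules together with $\uu,\vu\in\UU(X)$ automatically ensure $M_{n+1}\subset U_{n+1}\subset V_n$ and $V_{n+1}\subset V_n$, so every sequence of moves already lies in $\ww(X)$ and membership in $\cW$ is well-defined.

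For the easy directions, a winning strategy for $\al$ in $BM(X,\cV;\uu,\vu)$ lifts to one in $OD(X,\pn,\cW;\uu,\vu)$: $\al$ plays the same $U_n$'s, ignoring $\be$'s extra coordinate; the resulting $\sqnn V\in\cV$ gives $\sqn{V_n,M_n}\in\cW$ by definition of $\ww_v$. Symmetrically, a winning strategy for $\be$ in $BM$ extends to $OD$: $\be$ plays the same $V_n$, and additionally picks any $M_n\in\pn$ with $M_n\subset U_n$ (available by (ii)); the outcome $\sqnn V\notin\cV$ forces $\sqn{V_n,M_n}\notin\cW$.

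For the converse directions, the missing auxiliary moves are simulated. Given a winning strategy for $\al$ in $OD$, $\al$'s $BM$-strategy is defined as follows: after $\be$ plays $V_n$, $\al$ internally selects some $M_n\in\pn$ with $M_n\subset U_n$ and feeds $(V_n,M_n)$ into the $OD$-strategy to produce his next move $U_{n+1}$. The simulated $OD$-play is legal, and because $\al$'s $OD$-strategy is winning, $\sqn{V_n,M_n}\in\cW$, so $\sqnn V\in\cV$, and $\al$ wins the actual $BM$-play. Dually, given a winning strategy for $\be$ in $OD$, $\be$'s $BM$-strategy is obtained by running the $OD$-strategy against $\al$'s $U_n$'s and keeping only the $V_n$-components; since the $OD$-strategy defeats every $\al$-play, $\sqn{V_n,M_n}\notin\cW$, hence $\sqnn V\notin\cV$.

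Exactly the same translation establishes $MB(X,\cV;\uu,\vu)\sim DO(X,\pn,\cW;\uu,\vu)$: $MB$ and $DO$ differ from $BM$ and $OD$ respectively only in that $\al$'s first move is drawn from $\uu(X)$ rather than fixed at $X$, and this choice is transparent under the translation. I do not foresee a real obstacle; the equivalence reduces to the two observations above, and the bookkeeping on strategies is routine.
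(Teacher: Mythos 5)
Your proposal is correct and follows essentially the same route as the paper's (much terser) proof: since $\cW=\ww_v(X,\cV)$ makes the outcome depend only on $\sqnn V$, strategies transfer directly between the games, with the $M_n$'s either ignored or filled in arbitrarily using the $\pi$-net property. Your write-up merely makes explicit the bookkeeping that the paper leaves implicit.
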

\begin{proof}
For $\cW=\ww_v(X,\cV)$ the outcome of the games $OD$ and $DO$ does not depend on the choice of $M_n$, so the strategies from the games $BM$ and $MB$ are suitable for the games $OD$ and $ DO$.
\end{proof}
Proposition \ref{pbmg3} shows that $BM$ ($MB$) games are a special case of $OD$ ($DO$) games.

\begin{proof}[Proof of Proposition \ref{pbmg1}]
Let $\pn$ be some $\pi$-net in the space $X$, and let $\cW=\ww_v(X,\cV)$. From Proposition \ref{pbmg3} it follows that
\begin{align*}
BM(X,\cV;\dots )&\sim OD(X,\pn,\cW;\dots ),
&
MB(X,\cV;\dots )&\sim DO(X,\pn,\cW;\dots ).
\end{align*}
The family $\cW$ is monolithic if and only if the family $\cV$ is monolithic. It remains to apply Proposition \ref{pbmg2}.
\end{proof}

%-------------------------------------------

\begin{proposition}\label{pbmg4}
Let $X$ be a space. Suppose that $\cW\subset \ww(X)$, $\uu,\vu\in\UU(X)$, $\pn_1$ and $\pn_2$ are  $\pi$-nets of the space $X$, and the following conditions are met:
\begin{itemize}
\item[\rm (1)] for $M_1\in \pn_1$ there is an $M_2\in \pn_2$ such that $M_2\subset M_1$ and for $M_2\in \pn_2$ there is an  $M_1\in \pn_1$ such that $M_1\subset M_2$;
\item[\rm (2)] if $\sqn{V_n,M_n}\in \cW$ and $M_n\subset L_n\subset V_n$ for $\nom$, then $\sqn{V_n,L_n}\in \cW$.
\end{itemize}
Then
\begin{align*}
OD(X,\pn_1,\cW;\uu,\vu)&\sim OD(X,\pn_2,\cW;\uu,\vu),
\\
DO(X,\pn_1,\cW;\uu,\vu)&\sim DO(X,\pn_2,\cW;\uu,\vu).
\end{align*}
\end{proposition}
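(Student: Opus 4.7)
The plan is to transport winning strategies between the two games by a parallel-play construction. The $DO$ case is identical to $OD$ apart from $\al$'s first move, so I would concentrate on the $OD$ case; since condition (1) is symmetric in $\pn_1$ and $\pn_2$, one direction of the equivalence suffices, the other following after interchanging the two $\pi$-nets.

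Using condition (1), first fix translation maps $\ph_{21}\colon \pn_2\to\pn_1$ and $\ph_{12}\colon \pn_1\to\pn_2$ satisfying $\ph_{ij}(M)\subset M$. Because $\ph_{ij}(M)\subset M\subset U_n$ whenever $M\subset U_n$, each of these maps sends legal $\pn_i$-moves to legal $\pn_j$-moves. With these dictionaries in hand, the parallel play runs as follows. Suppose $\sigma$ is a winning strategy for $\al$ in $OD(X,\pn_1,\cW;\uu,\vu)$. I would define $\al$'s strategy $\sigma'$ in $OD(X,\pn_2,\cW;\uu,\vu)$ by simulating an $OD(X,\pn_1,\cW;\uu,\vu)$-play alongside the actual one, sharing the $U_n$'s and $V_n$'s but replacing $\be$'s $\pn_2$-move $M_n$ in the simulation by $\ph_{21}(M_n)\in\pn_1$; $\sigma'$'s response at step $n+1$ is whatever $U_{n+1}$ is prescribed by $\sigma$ in the simulated play. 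Dually, given a winning strategy $\tau$ for $\be$ in the $\pn_1$-game, one defines $\tau'$ in the $\pn_2$-game by having $\be$ consult $\tau$ for $(V_n, M_n^{(1)})$ and then play $(V_n, \ph_{12}(M_n^{(1)}))$ in the actual game.

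It remains to transfer the winning verdict. In the $\al$-direction the simulated play yields $\sqn{V_n, \ph_{21}(M_n)}\in\cW$; condition (2) applied with $L_n := M_n$ (using $\ph_{21}(M_n)\subset M_n$) upgrades this to $\sqn{V_n, M_n}\in\cW$. The $\be$-direction uses the contrapositive of condition (2) to pass from non-membership at the smaller $\pn_2$-sequence to non-membership at the enlarged $\pn_1$-sequence. The main obstacle is the second inclusion $L_n\subset V_n$ demanded by condition (2): the game rules guarantee only that $M_n$ and $V_n$ are separately contained in $U_n$, with no direct relation between them a priori. Navigating this is where the argument becomes delicate; one must exploit the freedom in choosing $\ph_{ij}$, together with the $\pi$-net property of $\pn_1$ and $\pn_2$, to arrange (or to sidestep) the required inclusions in each round. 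Once this bookkeeping is in place, $\al$- and $\be$-favorability coincide in the two games.
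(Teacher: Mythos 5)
Your construction is exactly the paper's: fix selection maps $\ph_1\colon\pn_1\to\pn_2$ and $\ph_2\colon\pn_2\to\pn_1$ with $\ph_i(M)\subset M$ (condition (1)), transport $\al$'s strategy by feeding $\ph_2(M_n)$ into the $\pn_1$-strategy, transport $\be$'s strategy by consulting the $\pn_1$-strategy for $(V_n,L_n)$ and playing $M_n=\ph_1(L_n)$, and transfer the verdict via condition (2) and its contrapositive. The only divergence is your closing worry about securing $L_n\subset V_n$: the paper performs no extra bookkeeping there at all — it applies condition (2) with $L_n=M_n$ in the $\al$-direction and simply records $L_n\subset V_n$ in the $\be$-direction — so the step you defer as "delicate" is treated as immediate; you are right that the game rules only guarantee $M_n\subset U_n$, but this imprecision is shared with the published argument (and is harmless for every family $\cW$ to which the proposition is later applied, since for $\ww_l$, $\ww_k$, $\ww_e$ and $\ww_v$ condition (2) holds without the inclusion $L_n\subset V_n$).
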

\begin{proof} Fix $\ph_1: \pn_1\to \pn_2$ and $\ph_2: \pn_2\to \pn_1$ so $\ph_1(M_1)\subset M_1$ for $M_1\in \pn_1$ and $\ph_2(M_2)\subset M_2$ for $M_2\in \pn_2$.

Suppose that the player $\al$ in the game $G_1=OD(X,\pn_1,\cW;\uu,\vu)$ has a winning strategy $s_1$. Let us describe a winning strategy $s_2$ for $\al$ in the game $G_2=OD(X,\pn_2,\cW;\uu,\vu)$. We put
\[
s_2(U_0,V_0,M_0,\dots ,V_{n-1},M_{n-1})=U_n=s_1(U_0,V_0,\ph_2(M_0),\dots ,V_{n-1 },\ph_2(M_{n-1})).
\]

Let player $\be$ in the game $G_1$ have a winning strategy $s_1$. Then a winning strategy $s_2$ for $\be$ in the game $G_2$ is as follows.  On the $k$th move player $\be$  chooses an open $V_k$ and $L_k\in \pn_1$, $L_k\subset V_k$, $M_k=\ph_1(L_k)$.
We put
\begin{align*}
(V_n,L_n)&=s_1(U_0,V_0,L_0,\dots ,V_{n-1},L_{n-1},U_n),
\\
M_n&=\ph_1(L_n),
\\
s_2(U_0,V_0,M_0,\dots ,V_{n-1},M_{n-1},U_n)&=(V_n,M_n).
\end{align*}

For the game $DO$ the proof is similar.
\end{proof}

\begin{definition}
A strategy of player $\al$ in games $BM,MB,OD,DO$ will be called \term{regular} if $\cl{U_{n+1}}\subset V_n$ for $\nom$.
\end{definition}

\begin{proposition}\label{pbmg5}
Let $X$ be a quasi-regular space, $G$ be one of the games
	$BM(X,\cV)$, 
	$MB(X,\cV)$, 
	$OD(X,\pn,\cW)$, 
	$DO(X,\pn,\cW)$, 
where $\cV\subset \vv (X)$, $\pn$ be a $\pi$-net of $X$, $\cW\subset \ww(X)$, $\cV$ and $\cW$ be monolithic families.
\begin{itemize}
\item[\rm (1)] If $\al$ has a winning strategy in $G$, then there is a winning regular strategy.
\item[\rm (2)] Suppose that player $\be$ has chosen a strategy $s$ in $G$ and player $\al$ has a strategy that outperforms the strategy $s$. Then player $\al$ has a regular strategy that outperforms the strategy $s$.
\end{itemize}
\end{proposition}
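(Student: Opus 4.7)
The plan is to use quasi-regularity to shrink each $\al$-move so that its closure lies strictly inside the preceding $\be$-move, and to invoke monolithicity in part~(2) to ensure the winning condition survives the shrinking. I describe the $BM(X,\cV)$ case; the rest are parallel, via an extra initial quasi-regular shrinking in the $MB$ case and Proposition~\ref{pbmg3} to reduce the $\cW$-conditions to $\cV$-conditions for $OD$/$DO$.

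For part~(1), let $s^*$ be a winning $\al$-strategy and define
\[
s'(V_0,\ldots,V_{n-1}) \eqdef U_n',
\]
where $U_n'$ is any nonempty open set with $\cl{U_n'}\subset s^*(V_0,\ldots,V_{n-1})$, obtained by quasi-regularity applied inside the nonempty open set $s^*(V_0,\ldots,V_{n-1})\subset V_{n-1}$. Then $\cl{U_n'}\subset V_{n-1}$, so $s'$ is regular. For any play $(U_0,V_0,U_1',V_1,\ldots)$ in which $\al$ follows $s'$, set $U_n^*=s^*(V_0,\ldots,V_{n-1})$; since $V_n\subset U_n'\subset U_n^*$, the ``phantom'' play $(U_0,V_0,U_1^*,V_1,\ldots)$ is a legal play of $G$ in which $\al$ follows $s^*$, hence $(V_n)\in\cV$. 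In the $OD$/$DO$ case the $M_n$'s satisfy $M_n\subset U_n'\subset U_n^*$, so the same argument preserves $(V_n,M_n)$; monolithicity is not actually used for~(1).

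For part~(2) the phantom trick fails: with $\be$ following a fixed $s$, the substitution $U_n^*\mapsto U_n'$ alters $s$'s subsequent response, so the plays $s^*$ vs $s$ and $s'$ vs $s$ diverge from the first round. To handle this, I pass via Proposition~\ref{pbmg1} to the equivalent game $G_r=BM(X,\cV;\uu_r,\uu_r)$, fix a choice function $\rho$ assigning to each nonempty open $U$ some $\rho(U)$ with $\cl{\rho(U)}\subset U$, and translate $s$ to $s_r\eqdef\rho\circ s$ in $G_r$. Using the constructive content of the proofs of Propositions~\ref{pbmg1} and~\ref{pbmg2} --- applied to a specific opponent strategy rather than to winning strategies in general --- transport the assumption ``$s^*$ beats $s$ in $G$'' to ``a translated $\tilde s^*_r$ beats $s_r$ in $G_r$.'' Then define the required regular $s'$ in $G$ so that, in the play $s'$ vs $s$, the refined $\be$-sequence $V_n^{(r)}\eqdef\rho(V_n)$ is precisely the $\be$-sequence in the $G_r$-play of $\tilde s^*_r$ vs $s_r$; the regularity built into $s'$ yields the sandwich $V_{n+1}\subset V_n^{(r)}\subset V_n$, and monolithicity of $\cV$ upgrades $(V_n^{(r)})\in\cV$ to $(V_n)\in\cV$, so $s'$ beats $s$. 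The main obstacle is precisely this translation step: Propositions~\ref{pbmg1} and~\ref{pbmg2} are stated at the level of existence of winning strategies, and one must specialize their constructive proofs to track a fixed opponent; once that is in hand, the monolithic sandwich is what transfers the winning condition back from $G_r$ to $G$.
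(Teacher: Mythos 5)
Your part (1) is correct, and it is in fact more self-contained than the paper's argument: the paper disposes of both parts in one line by invoking the equivalences $BM(X,\cV)\sim BM(X,\cV;\uu_r(X),\uu_t(X))$, etc., from Propositions \ref{pbmg1} and \ref{pbmg2} (whose proofs run through the four-player machinery of Section \ref{sec-bme}), whereas your phantom-play argument for (1) is direct, and your observation that monolithicity is not needed there is right.

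The gap is in part (2), at exactly the step you flag as ``the main obstacle,'' and it cannot be closed. The constructive content of Propositions \ref{pbmg1}/\ref{pbmg2} (Assertions \ref{abme4} and \ref{abme5}) interleaves the given play with a shadow play against a \emph{constructed} opponent strategy, not against (a translation of) the fixed $s$; since $s$ need not respond monotonically, $s(\ldots,U_n')$ and $s(\ldots,U_n)$ for $U_n'\subset U_n$ are in general incomparable, and there is no way to force the shadow play to be a play against $s$ itself. Indeed, the literal ``local'' statement of (2) is false: take $X=\mathbb{Q}$ and $\cV=\vvbm(\mathbb{Q})$, let $w$ be a winning strategy for $\be$ in $BM(\mathbb{Q})$ whose responses are never closed in $\mathbb{Q}$, and let $s$ follow $w$ as long as all of $\al$'s moves satisfy $\cl{U_{k+1}}\subset V_k$, switching to $V_n=U_n$ forever once $\al$ makes a non-regular move. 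Every regular strategy loses to $s$, yet $\al$ beats $s$ by playing $U_1=V_0$ and then $U_{n+1}=V_n$, which yields $\bigcap_n V_n=V_0\neq\es$. What is true --- and is all that the paper's one-line proof establishes and all that is used downstream (e.g.\ in Proposition \ref{pbmb4-1+1}) --- is the global version: if $G$ is $\be$-unfavorable, then every $\be$-strategy $s$ is beaten by some regular strategy. That version needs no per-opponent tracking: by the equivalence, $G_r=BM(X,\cV;\uu_r(X),\uu_t(X))$ is also $\be$-unfavorable, so the restriction of $s$ to $G_r$ is not winning there, and any $G_r$-strategy beating that restriction is a regular strategy whose play against $s$ in $G$ is the very same play. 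You should either prove this global form or add the hypothesis that $G$ is $\be$-unfavorable; as written, your transport step is asked to prove something stronger than is true.
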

\begin{proof}
If $X$ is a quasi-regular space, then $\uu=\uu_r(X)\in\UU(X)$.
Let $\vu=\uu_t(X)\in\UU(X)$.
Then, by virtue of Propositions \ref{pbmg1} and \ref{pbmg2},
\begin{align*}
BM(X,\cV)&\sim BM(X,\cV;\uu,\vu),
&
MB(X,\cV)&\sim MB(X,\cV;\uu,\vu),
\\
OD(X,\pn,\cW)&\sim OD(X,\pn,\cW;\uu,\vu),
&
DO(X,\pn,\cW)&\sim DO(X,\pn,\cW;\uu,\vu).
\end{align*}
\end{proof}

%%% end file('../en/in/bmg.tex') %%%
%%% begin file('../en/in/bmb.tex') %%%
\section{Generalization of Baire and nonmeager spaces through games}\label{sec-bmb}

Let $(X,\tp)$ be a space, $\tps=\tp\setminus\sset\es$.
We denote
\begin{align*}
\vvbm(X)&\eqdef \set{\sqnn V\in \vv(X): \bigcap_{\nom} V_n\neq\es },
\\
\vvbmi(X)&\eqdef \vv(X)\setminus \vvbm(X),
\\
\vvr(X)&\eqdef \set{\sqnn V \in \vv(X): \cl{V_{n+1}}\subset V_n\text{ for }\nom}.
\end{align*}

We put $BM(X)=BM(X,\vvbm(X))$. This is the classical Banach--Mazur game.  We put $MB(X)=MB(X,\vvbm(X))$.

\begin{theorem}[Banach--Oxtoby \cite{Oxtoby1957}; see also \cite{tel-gal-1986,arh2010}] \label{tbmb-bm}
Let $X$ be a space.
\begin{itemize}
\item[\rm(1)]
$X$ is Baire if and only if $BM(X)$ is $\be$-unfavorable;
\item[\rm(2)]
$X$ is nonmeager if and only if $MB(X)$ is $\be$-unfavorable.
\end{itemize}

\end{theorem}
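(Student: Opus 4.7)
The plan is to prove (1) and (2) in parallel by the standard Banach--Mazur argument; the two statements differ only in whether $\al$'s first move is free ($MB$), letting the construction produce dense families in all of $X$, or forced to be $X$ ($BM$), which confines the construction to the set $V_0=\sigma(X)$. The easier direction in both parts is to build a concrete strategy. Assuming $\be$ has no winning strategy, take dense open sets $(G_n)_\nom$ and a fixed $U\in\tps$ (only used in~(1)), and define a strategy $\sigma$ for $\be$: on the first move reply with a nonempty open $V_0\subset U\cap G_0$ in $BM$, or $V_0\subset U_0\cap G_0$ in $MB$; at step $n\ge 1$ reply to $U_n$ with some $V_n\subset U_n\cap G_n$, nonempty because $G_n$ is dense. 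Some play defeats $\sigma$, so $\bigcap_n V_n\neq\es$, and any point there lies in $U\cap\bigcap_n G_n$ in~(1) or in $\bigcap_n G_n$ in~(2).

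For the main direction, suppose $\be$ has a winning strategy $\sigma$. Build a tree $T$ of plays consistent with $\sigma$: root is the empty play in~(2) and the one-step exchange $(X,\sigma(X))$ in~(1). At a node whose last exchange is $(U_n,V_n)$, use Zorn's lemma to pick a maximal family $\{U_{n+1}^{(i)}\}_{i\in I}$ of $\al$-moves into $V_n$ whose $\be$-responses $V_{n+1}^{(i)}=\sigma(\ldots,U_{n+1}^{(i)})$ are pairwise disjoint, and declare the resulting extended plays the children of the node. Maximality forces $\bigcup_i V_{n+1}^{(i)}$ to be dense in $V_n$: otherwise a nonempty open $W\subset V_n$ would avoid every $V_{n+1}^{(i)}$, and the response $\sigma(\ldots,W)\subset W$ could enlarge the family. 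Let $G_n$ be the union of the $V_n$ appearing at level $n$ of $T$. A straightforward induction then shows in~(2) that each $G_n$ is open and dense in $X$, and in~(1) that each $G_n$ is open and dense in $V_0=\sigma(X)$, so that $G_n\cup(X\setminus\cl{V_0})$ is open and dense in $X$.

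The Baire hypothesis in~(1) or the nonmeagerness hypothesis in~(2) now furnishes a point $x\in V_0\cap\bigcap_n G_n$ or $x\in\bigcap_n G_n$, respectively. By the pairwise disjointness of siblings, $x$ belongs to exactly one set at every level, and these nodes form a branch of $T$; the corresponding play is consistent with $\sigma$ yet has $x\in\bigcap_n V_n$, so $\al$ wins it, contradicting the assumption on $\sigma$. The main obstacle I expect is the Zorn-selection step and the associated bookkeeping: it is essential that the $V_{n+1}^{(i)}$ (not merely the $U_{n+1}^{(i)}$) be pairwise disjoint, for only then does a point in $\bigcap_n G_n$ trace out a unique branch of $T$ rather than several.
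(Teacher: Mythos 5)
The paper does not prove this theorem: it is the classical Banach--Oxtoby characterization, quoted with references to \cite{Oxtoby1957,tel-gal-1986,arh2010}, so there is no proof in the text to compare yours against. Your argument is correct and is essentially the standard proof from those sources --- the easy direction by having $\be$ play into a prescribed sequence of dense open sets, and the hard direction via the Zorn-selected tree of plays whose $\be$-responses are pairwise disjoint at each level and therefore union to dense open sets tracing out a unique losing branch for $\sigma$.
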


\begin{definition}
Let $X$ be a space, let $\cV\subset \vv(X)$ and let $\cV^*=\cV\cup\vvbmi(X)$. We say that the space $X$ is
\begin{itemize}
\item \term{$\gbm(\cV)$-nonmeager} if $MB(X,\cV)$ is $\be$-unfavorable;
\item \term{$\gbm(\cV)$-Baire} if $BM(X,\cV)$ is $\be$-unfavorable;
\item a \term{$\gbm(\cV)$-space} if $BM(X,\cV^*)$ is $\al$-favorable.
\end{itemize}
\end{definition}

\begin{proposition} \label{pbmb1-1}
Let $X$ be a space and let $\cV_1\subset\cV_2\subset \vv(X)$.
\begin{itemize}
\item[\rm (1)] If $X$ is $\gbm(\cV_1)$-nonmeager, then $X$ is $\gbm(\cV_2)$-nonmeager.
\item[\rm (2)] If $X$ is $\gbm(\cV_1)$-Baire, then $X$ is $\gbm(\cV_2)$-Baire.
\item[\rm (3)] If $X$ is a $\gbm(\cV_1)$-space, then $X$ is a $\gbm(\cV_2)$-space.
\end{itemize}
\end{proposition}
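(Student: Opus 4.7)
The plan is to observe that the winning condition for player $\al$ in all of the games $BM(X,\cV)$ and $MB(X,\cV)$ is the membership condition $\sqnn V\in\cV$, so enlarging the family $\cV$ can only help player $\al$ and handicap player $\be$. From this single observation, all three parts follow by the same monotonicity argument, with a contrapositive twist needed for (1) and (2).

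For parts (1) and (2), I would argue by contraposition. Suppose $s$ is a winning strategy for $\be$ in $MB(X,\cV_2)$ (respectively, $BM(X,\cV_2)$). By definition, every play consistent with $s$ produces a sequence $\sqnn V$ with $\sqnn V\notin\cV_2$. Since $\cV_1\subset\cV_2$, such a sequence also satisfies $\sqnn V\notin\cV_1$. But the legal moves and the move format in $MB(X,\cV_1)$ (resp.\ $BM(X,\cV_1)$) are exactly the same as in $MB(X,\cV_2)$ (resp.\ $BM(X,\cV_2)$), since the families $\cV_i$ affect only the winning condition and not the rules for choosing $U_n$ and $V_n$. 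Hence the same $s$ is a winning strategy for $\be$ in the game indexed by $\cV_1$. Taking the contrapositive: if $\be$ has no winning strategy in the $\cV_1$-game, then $\be$ has no winning strategy in the $\cV_2$-game, which is precisely the desired implication.

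For part (3), the key observation is that $\cV_1\subset\cV_2$ implies
\[
\cV_1^*=\cV_1\cup\vvbmi(X)\subset\cV_2\cup\vvbmi(X)=\cV_2^*.
\]
If $s$ is a winning strategy for $\al$ in $BM(X,\cV_1^*)$, then every play against $s$ ends with $\sqnn V\in\cV_1^*$, hence $\sqnn V\in\cV_2^*$, so $s$ is also a winning strategy for $\al$ in $BM(X,\cV_2^*)$. Thus $X$ is a $\gbm(\cV_2)$-space.

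There is essentially no obstacle here: the statement is a pure monotonicity remark about the win condition as a function of $\cV$. The only point one must be careful about is keeping straight the direction of inheritance (winning strategies for $\al$ transfer upward in $\cV$, while winning strategies for $\be$ transfer downward, so absence of a $\be$-winning strategy transfers upward). No properties of the monolithic families or of $\vvbmi(X)$ beyond the inclusion $\cV_1^*\subset\cV_2^*$ are used.
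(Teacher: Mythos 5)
Your proof is correct and rests on the same monotonicity observation as the paper's one-line argument (enlarging $\cV$ only weakens the win condition for $\al$ and strengthens it for $\be$, while the legal moves are unchanged). You are in fact slightly more careful than the paper, which phrases the whole proof in terms of player $\al$ transferring a strategy, whereas parts (1) and (2) concern $\be$-unfavorability and are most cleanly handled, as you do, by transferring a hypothetical winning strategy of $\be$ downward from $\cV_2$ to $\cV_1$ and taking the contrapositive.
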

\begin{proof}
In the games $MB(X,\cV_2)$ and $BM(X,\cV_2)$ player $\al$ i uses the strategy from the games $MB(X,\cV_1)$ and $BM(X,\cV_1)$, respectively.
\end{proof}

\begin{proposition} \label{pbmb1}
Let $X$ be a space, and let $\cV\subset \vv(X)$ be a monolithic family.
\begin{itemize}
\item[\rm(1)]
If $X$ is nonmeager and $X$ is a $\gbm(\cV)$-space, then $X$ is $\gbm(\cV)$-nonmeager.
\item[\rm(2)]
If $X$ is a Baire space and $X$ is a $\gbm(\cV)$-space, then $X$ is a $\gbm(\cV)$-Baire space.
\end{itemize}
\end{proposition}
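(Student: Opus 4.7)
Both assertions are proved by the same strategy-transfer scheme, so I focus on (2); part (1) follows from the analogous argument with $MB$ in place of $BM$ in the target game, appealing to the nonmeager half of Theorem~\ref{tbmb-bm} for the final contradiction. The plan is to argue by contradiction: assume that $\be$ has a winning strategy $\tau$ in $BM(X,\cV)$, and let $\sigma$ be a winning $\al$-strategy in $BM(X,\cV^*)$ supplied by the $\gbm(\cV)$-space hypothesis. From $\sigma$ and $\tau$ I will build a winning $\be$-strategy $\tau^*$ in $BM(X)=BM(X,\vvbm(X))$, which by Theorem~\ref{tbmb-bm} contradicts $X$ being Baire.

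A preliminary observation is that both $\cV^*$ and $\vvbmi(X)$ are monolithic. Indeed, if $\sqnn U\in\vv(X)$ and $\sqnn V\in\cV^*$ satisfy $U_{n+1}\subset V_n\subset U_n$, then either $\sqnn V\in\cV$, whence $\sqnn U\in\cV\subset\cV^*$ by monolithicity of $\cV$, or $\sqnn V\in\vvbmi(X)$, whence $\bigcap_n U_n\subset\bigcap_n V_n=\es$ and so $\sqnn U\in\vvbmi(X)\subset\cV^*$; the same second case shows monolithicity of $\vvbmi(X)$. Now consider the deterministic play of $\sigma$ against $\tau$ inside $BM(X,\cV^*)$: the resulting sequence of $\be$-moves lies in $\cV^*$ (because $\sigma$ wins there) and outside $\cV$ (because $\tau$ wins in $BM(X,\cV)$), so it sits in $\cV^*\setminus\cV\subset\vvbmi(X)$. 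In particular this distinguished sequence has empty intersection; this is the \emph{empty-intersection engine} that $\tau^*$ will tap into.

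The strategy $\tau^*$ will be built by running, in parallel with the real game, an auxiliary $BM(X,\cV^*)$-play whose $\al$-moves $\widehat U_n$ are obtained by applying $\sigma$ to the real $\be$-moves $V_0,\ldots,V_{n-1}$ played so far, and whose $\be$-moves $\widehat V_n$ are then given by $\tau$ on this coupled history. At each step $\tau^*$ plays a nonempty open $V_n\subset U_n\cap\widehat V_n$, and the construction is arranged inductively so that a cofinal subsequence of $(V_n)$ is contained in the corresponding $\widehat V_n$'s. The same $\sigma$-versus-$\tau$ analysis applied to this coupled auxiliary play places $(\widehat V_n)$ in $\vvbmi(X)$, and the monolithicity of $\vvbmi(X)$ then transfers the empty intersection from the cofinal subsequence to the full real sequence, giving $\bigcap_n V_n=\es$; hence $\tau^*$ wins in $BM(X)$, contradicting Baireness by Theorem~\ref{tbmb-bm}.

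The main technical obstacle is precisely this coupling: since $\al$'s real move $U_n$ can be any nonempty open subset of $V_{n-1}$, there is no a priori reason for $U_n\cap\widehat V_n$ to be nonempty, and a rigid auxiliary play decoupled from $\al$'s real choices can simply be evaded. I expect to resolve this by a double-step or analogous interleaving scheme in which $\be$'s move on one round pre-empts $\al$'s freedom on the next by forcing $U_{n+1}\subset\widehat V_n$ along a cofinal set of rounds, so that the required intersections stay nonempty at the coupled steps. Once the coupling is inductively maintainable, the monolithicity transfer and Banach--Oxtoby contradiction finish (2); case (1) then follows by the same template with $MB$ in place of $BM$, the auxiliary $BM(X,\cV^*)$-play still supplied by $\sigma$, and the target contradiction drawn from the nonmeager half of Theorem~\ref{tbmb-bm}.
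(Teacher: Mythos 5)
Your overall plan is sound and is, modulo packaging, the paper's own argument read contrapositively: the paper translates $BM$/$MB$ into the games $OD$/$DO$ and proves the statement via the four‑player game $\ode$ (Theorem \ref{tbme2}), where the extra player $\de$ performs exactly the interleaving you are trying to set up, while you work with two players and aim to manufacture a winning $\be$-strategy in $BM(X)$. Your preliminary observations are correct (monolithicity of $\vvbmi(X)$ and of $\cV^*$, and the fact that any single play in which $\al$ follows $\sigma$ and $\be$ follows $\tau$ lands in $\cV^*\setminus\cV\subset\vvbmi(X)$). The genuine gap is that the proof stops at the decisive point: the coupling of the auxiliary $\sigma$-versus-$\tau$ play to the real opponent's moves is where the entire content of the proposition lives, and ``I expect to resolve this by a double-step or analogous interleaving scheme'' is a plan, not an argument. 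As you yourself observe, with a decoupled auxiliary play the prescription $V_n\subset U_n\cap\widehat V_n$ can fail because the intersection may be empty, so until the interleaving is specified and checked for legality at every round, nothing has been proved.

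The gap is closable, and more simply than a double-step: compose $\sigma$ and $\tau$ inside each round and let the real opponent's reply serve as the next $\be$-move of the $\sigma$-play. Concretely, maintain an auxiliary play $X=A_0\supset B_0\supset A_1\supset\cdots$ of $BM(X,\cV^*)$ with $\al$ following $\sigma$, and an auxiliary play $X=C_0\supset D_0\supset C_1\supset\cdots$ of $BM(X,\cV)$ with $\be$ following $\tau$. Put $V_0\eqdef D_0=\tau(X)$; upon receiving the real move $U_n\subset V_{n-1}$ ($n\ge1$), declare $B_{n-1}\eqdef U_n$ (legal, since $U_n\subset V_{n-1}=D_{n-1}\subset A_{n-1}$), compute $A_n=\sigma(A_0,B_0,\dots,B_{n-1})\subset U_n$, declare $C_n\eqdef A_n$ (legal, since $A_n\subset U_n\subset D_{n-1}$), compute $D_n=\tau(C_0,D_0,\dots,C_n)\subset A_n$, and answer $V_n\eqdef D_n\subset U_n$. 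The rules of $BM(X)$ then force $U_{n+1}\subset V_n=D_n$, so every declaration is legal and no nonempty-intersection problem ever arises. Now $(U_{n+1})_{n\in\om}\in\cV^*$ because $\sigma$ wins, $(V_n)_{n\in\om}\notin\cV$ because $\tau$ wins, and $V_{n+1}\subset U_{n+1}\subset V_n$; monolithicity of $\cV$ shows that $(U_{n+1})_{n\in\om}\in\cV$ would force $(V_n)_{n\in\om}\in\cV$, so $(U_{n+1})_{n\in\om}\in\vvbmi(X)$ and $\bigcap_n V_n=\es$, contradicting Theorem \ref{tbmb-bm}. Note that monolithicity is needed precisely here, to pass membership in $\cV$ between the two interleaved sequences --- not, as you suggest, to transfer an empty intersection along a cofinal subsequence, which for nested sequences is automatic. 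Part (1) is the same with an index shift: the real opening move $U_0$ of $MB(X)$ is fed in as $B_0$, and the auxiliary $\tau$-play is a play of $MB(X,\cV)$ opened by $C_0=A_1=\sigma(X,U_0)$. This is exactly the construction the paper encodes by giving player $\de$ the moves $D_n$ and invoking the Baire property through the $\be$-unfavorability of $BM(U)$.
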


\begin{note}
In \cite{arh2010} Theorem 4.3 was proved, which is similar to \ref{pbmb1}.  The paper \cite{arh2010} considered the game $G_{\cV}$, which differs slightly from $BM(X,\cV)$ by the payoff function: in the game $BM(X,\cV)$ player $\al$ wins if $\sqnn V\in\cV$, and in the game $G_{\cV}$, if $\sqnn U\in\cV$. For $\cV$  used in most of applications, the games $G_{\cV}$ and $BM(X,\cV)$ are equivalent. Below, after Proposition \ref{pbmb3}, we give another proof of Proposition \ref{pbmb1}.
\end{note}

We put
\begin{align*}
\ww_e(X)\eqdef\{ \sqn{V_n,M_n}\in \ww(X):&\ \bigcap_{\nom} V_n=\es \}.
\end{align*}

\begin{definition}
Let $X$ be a space, $\pn$ be a $\pi$-net $X$, $\cW\subset\ww(X)$ and $\cW^*=\cW\cup \ww_e(X)$.
We say that the space $X$
\begin{itemize}
\item \term{$\god(\pn,\cW)$-nonmeager} if $DO(X,\pn,\cW)$ is $\be$-unfavorable;
\item \term{$\god(\pn,\cW)$-Baire} if $OD(X,\pn,\cW)$ is $\be$-unfavorable;
\item a \term{$\god(\pn,\cW)$-space} if $OD(X,\pn,\cW^*)$ is $\al$-favorable.
\end{itemize}
\end{definition}

\begin{proposition} \label{pbmb2-1}
Let $X$ be a space, $\pn_1,\pn_2$ be $\pi$-nets $X$, $\pn_2\subset \pn_1$ and $\cW_1\subset\cW_2\subset \ww(X)$.
\begin{itemize}
\item[\rm (1)] If $X$ is $\god(\pn_1,\cW_1)$-nonmeager, then $X$ is $\god(\pn_2,\cW_2)$-nonmeager.
\item[\rm (2)] If $X$ is $\god(\pn_1,\cW_1)$-Baire, then $X$ is $\god(\pn_2,\cW_2)$-Baire.
\item[\rm (3)] If $X$ is a $\god(\pn_1,\cW_1)$-space, then $\god(\pn_2,\cW_2)$-space.
\end{itemize}
\end{proposition}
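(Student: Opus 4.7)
My plan is a direct monotonicity argument based on two structural observations: since $\pn_2\subset\pn_1$, every choice $M_n\in\pn_2$ is automatically a legal choice in the $\pn_1$-game; and since $\cW_1\subset\cW_2$, also $\cW_1^*=\cW_1\cup\ww_e(X)\subset\cW_2\cup\ww_e(X)=\cW_2^*$. Neither the move sets for $\al$ nor the opening formats depend on $\pn$ or $\cW$, so strategies of both players can be transported between the games by literal copying.

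For (1) and (2) I proceed by contrapositive. Suppose $\be$ has a winning strategy $s$ in $DO(X,\pn_2,\cW_2)$ (respectively $OD(X,\pn_2,\cW_2)$). Every $\be$-move prescribed by $s$ has the form $(V_n,M_n)$ with $M_n\in\pn_2\subset\pn_1$, so $s$ is equally a legal $\be$-strategy in $DO(X,\pn_1,\cW_1)$ (respectively $OD(X,\pn_1,\cW_1)$). Against this transferred strategy, any run produces exactly the same sequence $\sqn{V_n,M_n}$ as in the original game; by hypothesis $\sqn{V_n,M_n}\notin\cW_2$, and from $\cW_1\subset\cW_2$ it follows that $\sqn{V_n,M_n}\notin\cW_1$, so $\be$ wins in the transferred game as well.

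For (3) I argue directly. Suppose $\al$ has a winning strategy $s$ in $OD(X,\pn_1,\cW_1^*)$. In the game $OD(X,\pn_2,\cW_2^*)$, player $\be$ plays $M_n\in\pn_2\subset\pn_1$, so every history of $\be$-moves in the new game is also a legitimate input for $s$, and $\al$ may apply $s$ round by round. The resulting sequence satisfies $\sqn{V_n,M_n}\in\cW_1^*\subset\cW_2^*$, hence $\al$ wins.

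I do not foresee any real obstacle. The only thing to verify is the routine bookkeeping that the first-move conventions ($U_0=X$ for $OD$, $U_0\in\uu_t(X)$ for $DO$) and the allowed $\al$-moves $U_n\in\uu_t(V_{n-1})$ are identical in the two games being compared, which is immediate from the definitions recalled just before the statement.
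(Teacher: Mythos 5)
Your proof is correct and follows essentially the same route as the paper, which disposes of all three items by literally transporting strategies across the games using the inclusions $\pn_2\subset\pn_1$ and $\cW_1\subset\cW_2$ (hence $\cW_1^*\subset\cW_2^*$). Your write-up is in fact slightly more careful than the paper's one-line proof, since for items (1) and (2) you correctly pass to the contrapositive and transfer $\be$'s winning strategy, whereas the paper's phrasing mentions only $\al$'s strategy; this is a presentational refinement, not a different argument.
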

\begin{proof}
In the games $OD(X,\pn_2,\cW_2)$ and $DO(X,\pn_2,\cW_2)$ player $\al$  uses the strategy from the games $OD(X,\pn_1,\cW_1)$ and $ DO(X,\pn_1,\cW_1)$, respectively.
\end{proof}

For $\cW\subset \ww(X)$ we set
\begin{align*}
\vv_w(X,\pn,\cW)\eqdef\{ \sqnn V & \in \vv(X) : \text{ if }M_n\subset V_n\text{ and }M_n\in \pn
\\
&\text{ for }\nom\text{, then } \sqn{V_n,M_n}\in\cW
\}.
\end{align*}

\begin{proposition} \label{pbmb2}
Let $X$ be a space, $\pn$ $\pi$-network $X$, $\cW\subset\ww(X)$, and $\cV=\vv_w(X,\pn,\cW)$.
\begin{itemize}
\item[\rm(1)]
If $X$ is $\gbm(\cV)$-nonmeager, then $X$ is $\god(\pn,\cW)$-nonmeager.
\item[\rm(2)]
If $X$ is $\gbm(\cV)$-Baire, then $X$ is $\god(\pn,\cW)$-Baire.
\end{itemize}
\end{proposition}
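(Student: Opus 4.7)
The plan is to prove both parts by contraposition via a direct strategy-transfer argument. For part~(2), suppose $\be$ has a winning strategy $s$ in $OD(X,\pn,\cW)$; I will construct a winning strategy $s'$ for $\be$ in $BM(X,\cV)$, contradicting that $X$ is $\gbm(\cV)$-Baire. Part~(1) is proved by the identical construction with $BM$, $OD$ replaced by $MB$, $DO$, since those pairs of games differ only in whether $\al$'s first move is forced to equal $X$ or may be any nonempty open set, a distinction that plays no role in the simulation.

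The definition of $s'$ is a hidden parallel $OD$-simulation. At round $n$ of $BM$, when $\al$ plays $U_n$, I reuse $U_n$ as $\al$'s $n$-th move $U_n^{OD}$ in the simulation. This is legal because by induction the $\be$-response in the previous round was reported as $V_{n-1}\eqdef V_{n-1}^{OD}$, so $U_n\subset V_{n-1}$ from the $BM$-rule coincides with $U_n^{OD}\in\uu_t(V_{n-1}^{OD})$ from the $OD$-rule. Apply $s$ to the current $OD$-history to obtain $(V_n^{OD},M_n^{OD})$, and let $s'$ report $V_n\eqdef V_n^{OD}$ back to the $BM$-game, while the auxiliary sequence $(M_n^{OD})_n$ is retained for the analysis.

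For the winning analysis, $s$ being winning in $OD(X,\pn,\cW)$ yields $((V_n^{OD},M_n^{OD}))_n\notin\cW$. Each $M_n^{OD}\in\pn$, and the game rule gives $M_{n+1}^{OD}\subset U_{n+1}^{OD}=U_{n+1}\subset V_n^{OD}=V_n$, so the sequence $(M_n^{OD})_n$ is an admissible witness with respect to the $\ww(X)$-containment built into $\vv_w(X,\pn,\cW)$. The definition of $\vv_w$ then forces $(V_n)\notin\cV$, i.e., $\al$ loses the $BM(X,\cV)$-play against $s'$. Thus $s'$ is winning for $\be$, contradicting $\gbm(\cV)$-Baireness, and part~(2) follows.

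The main technical obstacle is the bookkeeping of containments: the simulation directly provides only $M_n^{OD}\subset U_n$, not the $M_n\subset V_n$ appearing verbally in the definition of $\vv_w$, and reconciling these requires the one-step shift supplied by the $OD$-rule $U_{n+1}\subset V_n$, which yields the $\ww(X)$-compatible containment $M_{n+1}\subset V_n$. Once this shift is in place, the argument for both parts reduces to the same simulation, and no further hypotheses on $\cW$ are needed.
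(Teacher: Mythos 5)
Your overall plan --- contraposition plus a round-by-round simulation in which $\al$'s $BM$-moves are fed to a winning $\be$-strategy $s$ of $OD(X,\pn,\cW)$ and the first component $V_n^{OD}$ is reported back --- is exactly the route the paper takes (its proof is essentially one sentence). The simulation itself is sound: the play you build is a legal $OD$-play consistent with $s$, so $\sqn{V_n,M_n}\notin\cW$ for the pairs it produces, and the remark that parts (1) and (2) differ only in the first move is correct.

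The genuine gap is in the last step, and you half-see it yourself. To conclude $\sqnn V\notin\cV=\vv_w(X,\pn,\cW)$ you must exhibit a sequence $\sqnn L$ with $L_n\in\pn$ and $L_n\subset V_n$ \emph{for the same index $n$}, such that $\sqn{V_n,L_n}\notin\cW$; that is the hypothesis of the implication defining $\vv_w$, and it is strictly stronger than the $\ww(X)$-containment $L_{n+1}\subset V_n$. The simulation only guarantees $M_n^{OD}\subset U_n$, and in the $OD$-game $\be$ may choose $M_n$ anywhere in $U_n$, even disjoint from $V_n$, so $(M_n^{OD})_n$ need not be an admissible witness. Your ``one-step shift'' produces the re-indexed sequence $(M_{n+1}^{OD})_n$, which does satisfy the containment $M_{n+1}^{OD}\subset V_n$, but for that sequence you no longer know $\sqn{V_n,M_{n+1}^{OD}}\notin\cW$: non-membership was established only for the unshifted pairing. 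So the witness you offer is inadmissible and the admissible one is not known to fail $\cW$. For comparison, the paper's proof instead has $\be$ pick $M_n\in\pn$ with $M_n\subset V_n$ inside the simulated game, which makes the witness admissible but leaves the complementary issue of why the modified play is still lost by $\al$; in either version what actually closes the argument is some monotonicity of $\cW$ in the second coordinate (compare condition (2) of Proposition \ref{pbmg4}, satisfied by the concrete families $\ww_l$ and $\ww_k$), which you explicitly claim not to need. As written, your final paragraph asserts rather than proves the key containment, so the proof is incomplete.
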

\begin{proof}
Statement  (1) is equivalent to saying that if $OD(X,\pn,\cW)$ is $\be$-favorable, then $BM(X,\cV)$ is $\be$-favorable. The strategy for $\be$ in the game $BM(X,\cV)$ is that $\be$ chooses $V_n$ according to the winning strategy in the game $OD(X,\pn)$ and $M_n\in \pn$, $M_n\subset V_n$, arbitrarily. Statement (2) can be proved in the same way as (1).
\end{proof}

\begin{proposition} \label{pbmb3}
Let $X$ be a space, $\pn$ be a $\pi$-net $X$, and $\cW\subset\ww(X)$ be a monolithic family.
\begin{itemize}
\item[\rm(1)]
If $X$ is nonmeager and $X$ is a $\god(\pn,\cW)$-space, then $X$ is $\god(\pn,\cW))$-nonmeager.
\item[\rm(2)]
If $X$ is a Baire space and $X$ is a $\god(\pn,\cW)$-space, then $X$ is a $\god(\pn,\cW)$-Baire space.
\end{itemize}
\end{proposition}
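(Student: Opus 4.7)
The plan is to argue by contradiction, adapting the classical Banach--Mazur tree-of-plays argument used in the proof of the Banach--Oxtoby theorem (Theorem~\ref{tbmb-bm}).

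For part~(2), I will suppose $\sigma$ is $\al$'s winning strategy in $OD(X,\pn,\cW^*)$ and, toward a contradiction, that $\tau$ is $\be$'s winning strategy in $OD(X,\pn,\cW)$. Since $\cW\subseteq\cW^*$, the strategy $\tau$ is a legal opponent for $\sigma$ in $OD(X,\pn,\cW^*)$, and the unique $\sigma$-vs-$\tau$ play yields $\sqn{V_n,M_n}\in\cW^*\setminus\cW\subseteq\ww_e(X)$, so $\bigcap_n V_n=\es$. Thus a single application of $\sigma$ against $\tau$ gives no contradiction, and the Baireness of $X$ must be brought in. The goal will be to use Baireness to produce a \emph{second} $\tau$-consistent play for which $\bigcap V_n\ne\es$ while the $\al$-moves are still controlled by $\sigma$, so that $\sigma$'s winning property transfers and forces $\sqn{V_n,M_n}\in\cW^*\setminus\ww_e(X)\subseteq\cW$, contradicting $\tau$'s winning.

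A tree of $\tau$-consistent plays will be built inside the first $\be$-move $V_0$, which is Baire as an open subspace of $X$ by Theorem~\ref{tintro1}(2). At each level $n$, using a maximal disjoint family of nonempty open $\al$-moves together with the standard observation that $\al$ can shrink its choice $U_n$ into any nonempty open $W\subseteq V_{n-1}$ (forcing $V_n=\tau(\ldots,U_n)\subseteq W$), one obtains that the union of $\tau$'s $V$-responses at level $n$ is a dense open $G_n\subseteq V_0$. Baireness of $V_0$ gives $\bigcap_n G_n\ne\es$, and a diagonal argument extracts a single $\tau$-consistent branch with $\bigcap V_n\ne\es$. The main obstacle will be reconciling this branch with $\sigma$, since a priori only the $\sigma$-vs-$\tau$ branch is $\sigma$-consistent. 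I plan to handle it via monolithicity of $\cW^*$ (which follows from monolithicity of $\cW$ together with the elementary fact that $\ww_e(X)$ is monolithic, since $\bigcap V_n=\es$ and $U_{n+1}\subseteq V_n\subseteq U_n$ force $\bigcap U_n=\es$) combined with Proposition~\ref{pbmg2}: because $OD(X,\pn,\cW^*)$ is equivalent to $OD(X,\pn,\cW^*;\uu,\vu)$ for any $\uu,\vu\in\UU(X)$, one may take $\sigma$'s $\al$-moves from a chosen $\pi$-base $\cB$ and build the tree from $\cB$-refinements of $\sigma$'s suggestions; monolithicity then transports the $\cW^*$-winning property through such refinements, and the Baire-extracted branch inherits $\sqn{V_n,M_n}\in\cW^*$, completing the contradiction.

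Part~(1) follows the same scheme with $OD$ replaced by $DO$ and Baireness replaced by nonmeagerness. By Theorem~\ref{tintro1}(4), $X$ nonmeager gives a nonempty open Baire subspace $Y\subseteq X$; the tree construction is carried out inside $Y$, and $\al$'s (now free) first move in the $DO$-game is chosen to lie in $Y$ --- this is precisely why the conclusion must be phrased for $DO$ rather than $OD$. The main obstacle --- aligning the Baire-extracted branch with $\sigma$ --- is then handled exactly as in part~(2), via monolithicity of $\cW^*$ and Proposition~\ref{pbmg2}.
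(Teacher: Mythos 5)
Your overall plan --- steal $\sigma$'s $\cW^*$-win while using Baireness to rule out the $\ww_e(X)$ alternative, via a Banach--Oxtoby tree of $\tau$-consistent plays --- is the same idea the paper implements (there packaged as the four-player game $\ode$ and Theorem~\ref{tbme2}), and two of your supporting points are correct: $\cW^*=\cW\cup\ww_e(X)$ is monolithic, and part~(1) reduces to an open Baire subspace serving as $\al$'s free first move in $DO$. The gap is in the step you yourself flag as the main obstacle. The tree cannot be built ``from $\cB$-refinements of $\sigma$'s suggestions'': at a node ending in $V_n$, the strategy $\sigma$ offers a \emph{single} set $U^{\sigma}\subseteq V_n$, which in general is not dense in $V_n$; if every child refines $U^{\sigma}$, the union of $\tau$'s responses at the next level is dense only in $U^{\sigma}$, the sets $G_n$ stop being dense open in $V_0$, and Baireness of $V_0$ no longer yields a point of $\bigcap_n G_n$. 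If instead you branch over a maximal disjoint family of arbitrary nonempty open $W\subseteq V_n$ (which is what density requires), the extracted branch is not $\sigma$-consistent and you cannot conclude $\sqn{V_n,M_n}\in\cW^*$. Proposition~\ref{pbmg2} does not repair this: it only lets you fix \emph{in advance} a single $\uu\in\UU(X)$ assigning a $\pi$-base to each open set, whereas here the family from which $\al$'s move must be drawn depends on the branching set $W$, chosen adaptively \emph{inside} $V_n$ during the play.

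The missing device is an interleaved shadow play. One must branch over intermediate shrinkings $D_n$ with $D_n\subseteq V_n$, consult $\sigma$ on a \emph{modified} history in which suitable subsets $\wt V_n\subseteq D_n$ are substituted for $\be$'s moves (with $\wt M_n=M_n$), take the real move $U_{n+1}$ to be a refinement of the resulting shadow suggestion $\wt U_{n+1}$, and only then let $\tau$ answer. This forces $\wt V_{n+1}\subseteq V_{n+1}\subseteq\wt V_n\subseteq V_n$, so monolithicity of $\cW^*$ transfers $\sqn{\wt V_n,\wt M_n}\in\cW^*$ from the ($\sigma$-consistent) shadow play to the ($\tau$-consistent) real play, while the free choice of the $D_n$ simultaneously simulates a Banach--Mazur game on the Baire set $U_0$, whose $\be$-unfavorability (Theorem~\ref{tbmb-bm}) supplies a branch with $\bigcap_n V_n\neq\es$. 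This is exactly the content of the paper's Assertion~\ref{abme4}, Theorem~\ref{tbme1} (the adaptively chosen $\pi$-bases form a dummy coalition when the payoff family is monolithic) and Theorem~\ref{tbme2}; it is a substantive construction, not a consequence of Proposition~\ref{pbmg2} plus monolithicity as you cite them. With that construction substituted for your reconciliation step, the rest of your argument goes through.
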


\begin{note}
In \cite{rezn2008,rezn2022-2008} (Proposition 3) Proposition \ref{pbmb3} is proved for a few specific $\cW$ and $\pn$, 
but the idea of the proof is also valid for the general case. 
Below we give a proof of Proposition \ref{pbmb3} (see also Proposition \ref{pbme3}).
\end{note}

\begin{proof}[Proof of Proposition \ref{pbmb1}]
Let $\cV^*=\cV\cup\vvbmi(X)$, $\cW=\ww_v(X,\cV)$, and $\cW^*=\cW\cup \ww_e(X)$. Proposition \ref{pbmg3} implies that the games $BM(X,\cV)$, $MB(X,\cV)$ and $BM(X,\cV^*)$ are equivalent to the games $OD(X,\pn,\cW)$, $DO(X,\pn,\cW)$ and $OD(X,\pn,\cW^*)$, respectively. Consequently, the properties of being $\gbm(X,\cV)$-nonmeager, $\gbm(X,\cV)$-Baire, and $\gbm(X,\cV)$-spaces coincide with the properties of being $\god(X, \pn,\cW)$-nonmeager, $\god(X,\pn,\cW)$-Baire, and $\god(X,\pn,\cW)$-spaces, respectively. The fact that $\cV$ is monolithic implies that $\cW$ is monolithic. Now Proposition \ref{pbmb1} follows from Proposition \ref{pbmb3}.
\end{proof}

For $q\in\sset{l,k}$ we define the families $\ww_q(X)\subset \ww(X)$. We say that a  sequence $\sqn{V_n,M_n}\in \ww(X)$ belongs to $\ww_q(X)$ if condition $(\ww_q)$ is met:
\begin{itemize}
\item[$(\ww_l)$] $\ltu_{\nom} M_n \cap \bigcap_{\nom} V_n\neq\es$;
\item[$(\ww_k)$] there is a sequence $\sqnn x$ such that
\begin{itemize}
\item[$(LSQ)$] $x_n\in M_n$ for $\nom$ and for every $p\in\oms$ there is an $x\in \bigcap_{\nom} V_n$ such that $x=\limp p x_n$.
\end{itemize}
\end{itemize}

Note that if $X$ is a regular space, then the condition $(LSQ)$ is equivalent to the condition
\begin{itemize}
\item[]
\begin{itemize}
\item[$(LSQ)'$] 
$x_n\in M_n$ for $\nom$, the subspace $\cl{\set{x_n:\nom}}$ is compact and $\ltu_{\nom} x_n \subset \bigcap_{\nom} V_n$.
\end{itemize}
\end{itemize}

We denote
\begin{align*}
\ll(\sqn{V_n,M_n})=\{\sqnn x\in X^\om:\ & \sqnn x
\text{ satisfies }(LSQ)\}.
\end{align*}
A sequence $\sqn{V_n,M_n}$ is included in $\ww_k(X)$ if and only if $\ll(\sqn{V_n,M_n})\neq\es$.

We put
\begin{align*}
\pn_o(X)&\eqdef \tps, & \pn_p(X)&\eqdef \set{\sset x:x\in X},
\\
\vv_o(X)&\eqdef \vv_w(X,\pn_o(X),\ww_l(X)),
&
\vv_p(X)&\eqdef \vv_w(X,\pn_p(X),\ww_l(X)),
\end{align*}
\begin{align*}
\vv_f(X)\eqdef \{\sqnn U\in\vvbm (X) :\
& \text{for some }x\in \bigcap_{\nom} U_n
\\
&\text {the family }\sqnn U\text { forms a base at the point }x
\},
\\
\vv_k(X)\eqdef \{\sqnn U\in\vvbm (X) :\
& \bigcap_{\nom} U_n=M\text{ is compact and the family }\sqnn U
\\
&\text {is an outer base of the set }M
\}.
\end{align*}

Note that
\begin{itemize}
\item
$\sqnn V \in \vv_o(X)$ if and only if $\sqnn V \in \vv_{BN}(X)$ and for any sequence  $\sqnn M$, $M_n\subset V_n $ for $\nom$ of open nonempty sets we have $\ltu_{\nom} M_n\subset \bigcap_\nom V_n$;
\item
$\sqnn V \in \vv_p(X)$ if and only if $\sqnn V \in \vv_{BN}(X)$ and for any sequence of points $\sqnn x$, $x_n\in V_n$ for $\nom$, we have $\ltu_{\nom} x_n\subset \bigcap_\nom V_n$.
\end{itemize}

The following proposition is easily verified.
\begin{proposition} \label{pbmb4-1}
For any space $X$ the families $\vv_r(X)$ and $\ww_{q}(X)$ are monolithic for $r\in\sset{o,p,f,k}$ and $q\in\sset{l,k} $.
\end{proposition}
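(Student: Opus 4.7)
The plan is to unpack monolithicity for each family and verify it using two elementary observations about any chain $U_{n+1}\subset V_n\subset U_n$ for $\nom$. First, the intersections coincide: $\bigcap_\nom U_n=\bigcap_\nom V_n$ (the inclusion $\supset$ follows from $V_n\subset U_n$, while $\subset$ follows from $U_{n+1}\subset V_n$ together with the fact that $\bigcap_\nom U_n=\bigcap_\nom U_{n+1}$). Second, if $\sqnn V$ is a neighbourhood base at some $x\in\bigcap_\nom V_n$, or an outer base of some $M\subset\bigcap_\nom V_n$, then $\sqnn U$ has the same property, since any neighbourhood $W$ of the target that already contains some $V_n$ also contains $U_{n+1}\subset V_n$.

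For the two $\ww$-families monolithicity is immediate from the first observation. If $\sqn{V_n,M_n}\in\ww_l(X)$, any witness $z\in\ltu_\nom M_n\cap\bigcap_\nom V_n$ lies also in $\bigcap_\nom U_n$, so $\sqn{U_n,M_n}\in\ww_l(X)$. If $\sqn{V_n,M_n}\in\ww_k(X)$ with witnessing sequence $(x_n)$, then the same sequence witnesses $\sqn{U_n,M_n}\in\ww_k(X)$, because every $p$-limit $x=\limp{p} x_n$ lying in $\bigcap_\nom V_n$ still lies in $\bigcap_\nom U_n$. The families $\vv_f(X)$ and $\vv_k(X)$ are similarly immediate: the intersection is preserved (and remains compact in the $\vv_k$ case), and the base or outer-base property transfers from $\sqnn V$ to $\sqnn U$ by the second observation.

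The only case calling for a small trick is $\vv_o(X)$ and $\vv_p(X)$, defined by $\vv_w(X,\pn_r(X),\ww_l(X))$ for $r\in\sset{o,p}$. Given $\sqnn V\in\vv_r(X)$, $\sqnn U\in\vv(X)$ with $U_{n+1}\subset V_n\subset U_n$, and an arbitrary admissible choice $\sqnn M$ with $M_n\in\pn_r(X)$ and $M_n\subset U_n$, one cannot apply the hypothesis on $\sqnn V$ directly, since in general $M_n\not\subset V_n$. The fix is an index shift: set $L_n\eqdef M_{n+1}$. Each $L_n$ again belongs to $\pn_r(X)$ (both $\pn_o(X)=\tps$ and $\pn_p(X)=\set{\sset x:x\in X}$ contain $L_n$ as soon as they contain $M_{n+1}$), and $L_n\subset U_{n+1}\subset V_n$. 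Hence the hypothesis yields $\sqn{V_n,L_n}\in\ww_l(X)$, i.e.\ $\ltu_\nom L_n\cap\bigcap_\nom V_n\neq\es$. Since the upper limit is tail-invariant, $\ltu_\nom L_n=\ltu_\nom M_n$, and by the first observation the intersection equals $\bigcap_\nom U_n$; this gives $\sqn{U_n,M_n}\in\ww_l(X)$, as required. I do not foresee any genuine obstacle beyond recognizing this shift.
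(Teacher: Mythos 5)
Your proof is correct. The paper gives no argument for this proposition (it is stated as "easily verified"), and your verification supplies exactly what is omitted, including the one genuinely non-trivial point — that for $\vv_o(X)$ and $\vv_p(X)$ an admissible choice $M_n\subset U_n$ need not satisfy $M_n\subset V_n$, which you handle correctly via the index shift $L_n=M_{n+1}\subset U_{n+1}\subset V_n$ together with tail-invariance of $\ltu$ and the identity $\bigcap_\nom U_n=\bigcap_\nom V_n$.
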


For $r\in\sset{o,p,f,k}$ we define the games
\begin{align*}
BM_r(X)&\eqdef BM(X,\cV), & MB_r(X)&\eqdef MB(X,\cV),
\\
BM_r^*(X)&\eqdef BM(X,\cV^*), & MB_r^*(X)&\eqdef MB(X,\cV^*),
\\
\text{where}\quad\quad\quad\quad&&&
\\
\cV&=\vv_r(X), & \cV^*&=\cV\cup \vvbmi(X).
\end{align*}

For $t\in\sset{o,p}$ and $q\in\sset{l,k}$ we define the games
\begin{align*}
OD_{t,q}(X)&\eqdef OD(X,\cN,\cW), & DO_{t,q}(X)&\eqdef DO(X,\cN,\cW),
\\
OD_{t,q}^*(X)&\eqdef OD(X,\cN,\cW^*), & DO_{t,q}^*(X)&\eqdef DO(X,\cN,\cW^*),
\\
\text{where}\hfill\quad\quad&& \cN&=\pn_t(X),
\\
\cW&=\ww_q(X), & \cW^*&=\cW\cup \ww_e(X).
\end{align*}

\begin{definition} \label{dbmb1}
Let $X$ be a space, $r\in\sset{o,p,f,k}$. We say that the space $X$
\begin{itemize}
\item \term{$\gbm_r$-nonmeager} if $X$ is $(\be,MB_r)$-unfavorable;
\item \term{$\gbm_r$-Baire} if $X$ is $(\be,BM_r)$-unfavorable;
\item a \term{$\gbm_r$-space} if $X$ is $(\al,BM_r^*)$-favorable.
\end{itemize}
The class of $\gbm_r$-spaces will be denoted as $\gbm_r$.

Let $t\in\sset{o,p}$ and $q\in\sset{l,k}$.
We say that the space $X$
\begin{itemize}
\item \term{$\god_{t,q}$-nonmeager} if $X$ is $(\be,DO_{t,q})$-unfavorable;
\item \term{$\god_{t,q}$-Baire} if $X$ is $(\be,OD_{t,q})$-unfavorable;
\item a \term{$\god_{t,q}$-space} if $X$ is $(\al,OD_{t,q}^*)$-favorable.
\end{itemize}
The class of $\god_{t,q}$-spaces will be denoted as $\god_{t,q}$.
\end{definition}

\begin{definition}
Let $X$ be a space and let $t\in\sset{o,p,k,f}$. 
We say that a point $x$ is 
\term{$q_t$-point} if there exists a $\sqnn V\in\vv_t(X)$ such that $x\in \bigcap_\nom V_n$.
\end{definition}

Recall that a point $x\in X$ is called a \term{$q$-point} if there exist a $\sqnn V\in\vv_{BM}(X)$ such that $x\in\bigcap_\nom V_n$ and  any sequence $\sqnn x$, $x_n\in V_n$ for $\nom$, accumulates to some point; see \cite{Michael1964}. If $X$ is a regular space, then $x$ is a $q$-point if and only if $x$ is a $q_p$-point. Spaces of point-countable type are precisely spaces in which each point is a $q_k$-point. A point is a $q_f$-point if and only if this point has a countable base.

The following proposition is a direct consequence of the definitions.

\begin{proposition}\label{pbmb4q}
Let $X$ be a space, and let $t\in\sset{o,p,k,f}$. If $X$ is a $\gbm_t$-nonmeager ($\gbm_t$-Baire) space, then there are $q_t$-points in $X$ (the set of $q_t$-points is dense in $X$).
\end{proposition}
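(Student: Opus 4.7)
The plan is to prove both statements by contraposition, exploiting a single tautological observation: a point $x \in X$ is a $q_t$-point precisely when it lies in $\bigcap_{\nom} V_n$ for some $\sqnn V \in \vv_t(X)$, so the $q_t$-points are exactly the elements of such intersections.

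A preliminary step is to verify that $\vv_t(X) \subset \vvbm(X)$ for every $t \in \sset{o,p,f,k}$. For $t \in \sset{f,k}$ this is built into the definition. For $t = o$, given $\sqnn V \in \vv_o(X)$, choosing $M_n = V_n \in \pn_o(X)$ forces $\sqn{V_n,V_n} \in \ww_l(X)$, whence $\bigcap_\nom V_n \neq \es$; for $t = p$, picking any $x_n \in V_n$ and applying the definition of $\vv_p(X) = \vv_w(X,\pn_p(X),\ww_l(X))$ similarly yields $\bigcap_\nom V_n \neq \es$.

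For the nonmeager assertion, I would suppose that $X$ has no $q_t$-point and derive $\vv_t(X) = \es$: any $\sqnn V \in \vv_t(X)$ has nonempty intersection by the preliminary step, and every element of this intersection is, by definition, a $q_t$-point. But if $\vv_t(X) = \es$ then $\al$ can never win a play of $MB_t(X)$, so every strategy of $\be$ is trivially winning, contradicting the $\gbm_t$-nonmeager hypothesis.

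For the Baire assertion, I would suppose that the $q_t$-points are not dense in $X$, fix a nonempty open $U \subset X$ containing no $q_t$-point, and describe a winning strategy for $\be$ in $BM_t(X)$: respond to the mandatory first move $U_0 = X$ by $V_0 = U$, and to each subsequent $\al$-move $U_n$ by any admissible $V_n \subset U_n$ (say $V_n = U_n$). Induction gives $V_n \subset U$ for all $n$, so $\bigcap_\nom V_n \subset U$ contains no $q_t$-point; if $\sqnn V$ belonged to $\vv_t(X)$, this intersection would be nonempty and consist entirely of $q_t$-points, contradicting the choice of $U$. Hence $\sqnn V \notin \vv_t(X)$, $\be$ wins, and $X$ fails to be $\gbm_t$-Baire. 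The only nontrivial step is the preliminary inclusion $\vv_t(X) \subset \vvbm(X)$ for $t \in \sset{o,p}$; everything else is a direct unwinding of the definitions, and I do not expect a genuine obstacle.
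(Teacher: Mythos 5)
Your proof is correct and is exactly the intended argument: the paper offers no proof beyond the remark ``It follows from the definitions,'' and your unwinding — $\vv_t(X)\subset\vvbm(X)$ for all four values of $t$, so every play won by $\al$ produces a nonempty intersection consisting of $q_t$-points, after which $\be$ wins trivially (respectively, by first restricting into an open set free of $q_t$-points) — is precisely that unwinding. No gaps; the only step deserving comment, the inclusion $\vv_t(X)\subset\vvbm(X)$ for $t\in\sset{o,p}$, is the one you verified and is also asserted in the paper's ``Note that'' remarks following the definition of $\vv_o$ and $\vv_p$.
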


\begin{definition}[\cite{rezn2008,rezn2022-2008}]
Let $X$ be a space, and let $Y\subset X$. We call $Y$ \term{$C$-dense} if $\cl Y=X$ and for any countable family $\gamma$ of open subsets of $X$ the family $\gamma$ is locally finite if and only if the family $\set{U\cap Y:U\in\gamma}$ is locally finite in $Y$.
\end{definition}

For a Tychonoff $X$, $Y$ is $C$-dense in $X$ if and only if $Y$ is dense in $X$ and $C$-embedded in $X$.

\begin{proposition}[\cite{rezn2008,rezn2022-2008}]\label{pbmb4-1+1}
If $X$ is a quasi-regular space, then $Y\subset X\subset \cl Y$.
Let $\ggg\in\sset{\gbm_o,\god_{o,l}}$.
\begin{itemize}
\item[\rm(1)]
 If $Y$ is a $\ggg$-nonmeager ($\ggg$-Baire) space, then $X$ is a $\ggg$-nonmeager ($\ggg$-Baire) space.
\item[\rm(2)]
Let $X$ be a quasi-regular space and $Y$ be $C$-dense in $X$. A space $Y$ is $\ggg$-nonmeager ($\ggg$-Baire) if and only if $Y$ is $\ggg$-nonmeager ($\ggg$-Baire).
\end{itemize}
\end{proposition}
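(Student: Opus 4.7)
The plan is to transfer winning strategies between the games on $X$ and on $Y$ via the natural subspace correspondence $U\mapsto U\cap Y$, using that $Y$ is dense in $X$. Since the $\ggg$-nonmeager and $\ggg$-Baire versions differ only in $\al$'s first move (choosing $X$ vs.\ an arbitrary open set), a single construction handles all four assertions. I would also apply Propositions~\ref{pbmg1} and \ref{pbmg3} silently to pass freely between the $BM$ and $OD$ forms of the game.

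For (1), suppose $\be$ has a winning strategy $s_X$ in $\ggg(X)$; we build $s_Y$ on $Y$. Given $\al$'s move $U_n^Y$ on $Y$, choose any open $\tilde U\subset X$ with $\tilde U\cap Y=U_n^Y$ and set $U_n^X=\tilde U\cap V_{n-1}^X$, so that $U_n^X\cap Y=U_n^Y$ and $U_n^X\subset V_{n-1}^X$. Apply $s_X$ to obtain $V_n^X$ (and $M_n^X$ in the $OD$ case), and let $s_Y$ respond with $V_n^Y=V_n^X\cap Y$ and $M_n^Y=M_n^X\cap Y$; density of $Y$ makes these nonempty. If $\al$ defeats $s_Y$ on $Y$, pick $y\in\bigcap V_n^Y\cap\ltu^Y M_n^Y$: then $y\in\bigcap V_n^X$, and for every neighbourhood $U$ of $y$ in $X$, $U\cap Y$ meets infinitely many $M_n^Y=M_n^X\cap Y$, hence $U$ meets infinitely many $M_n^X$, giving $y\in\ltu^X M_n^X$. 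Thus $\al$ also wins the corresponding play against $s_X$, a contradiction. For $\gbm_o$ the same argument applies with $M_n^X$ ranging over arbitrary test sequences open in $X$ and $M_n^Y=M_n^X\cap Y$.

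For (2), assume $Y$ is $C$-dense in the quasi-regular $X$. Suppose $s_Y$ is winning for $\be$ on $Y$; we build $s_X$ on $X$. Given $U_n^X$, set $U_n^Y=U_n^X\cap Y$, run $s_Y$ to get $V_n^Y$ (and $M_n^Y$), and lift to $V_n^X$ open in $X$ with $V_n^X\cap Y=V_n^Y$ and, using quasi-regularity together with Proposition~\ref{pbmg5}, $\cl V_n^X\subset U_n^X$ (a regular strategy). Suppose $\al$ defeats $s_X$ on $X$, so $\sqn{V_n^X}\in\vv_o(X)$; we must find a witness in $Y$ to $\sqn{V_n^Y}\in\vv_o(Y)$. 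The $\ltu$-transfer of Part (1) reduces this to locating a single point of $Y$ in $\bigcap V_n^X$: once $y\in Y\cap\bigcap V_n^X$ is in hand, test families $M_n^Y\subset V_n^Y$ in $Y$ lift to $M_n^X=\tilde M_n\cap V_n^X$ in $X$ with $M_n^X\cap Y=M_n^Y$, and the $\vv_o(X)$ condition together with the argument of Part~(1) in reverse yields the required witness in $Y$. The main obstacle, and the whole reason $C$-density is invoked, is thus the extraction of a $Y$-point from $\bigcap V_n^X$: if $Y\cap\bigcap V_n^X=\es$ then $Y$ would be covered by the countable family $\{Y\setminus\cl^Y V_n^Y\}$, whose local-finiteness behaviour (via the $C$-dense equivalence) forces the corresponding family $\{X\setminus\cl V_n^X\}$ to cover a neighbourhood of $Y$ in a controlled way, contradicting—after selecting the test sequence $M_n^X$ appropriately supported near $Y$ using quasi-regularity—the condition $\sqn{V_n^X}\in\vv_o(X)$. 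Both hypotheses are essential precisely in this extraction step.
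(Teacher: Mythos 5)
Part (1) of your proposal is correct and is essentially the paper's argument: lift $\al$'s moves from $Y$ to $X$ so that $U_n^X\cap Y=U_n^Y$ and $U_n^X\subset V_{n-1}^X$, push $\be$'s answers down by intersecting with the dense set $Y$, and observe that a point of $\ltu_{\nom}M_n^Y\cap\bigcap_{\nom}V_n^Y$ is automatically a point of $\ltu_{\nom}M_n^X\cap\bigcap_{\nom}V_n^X$. The trouble is in Part (2), which is where the two hypotheses actually do work. Your central reduction --- ``once a single point $y\in Y\cap\bigcap_{\nom}V_n^X$ is located, the rest follows from the argument of Part (1) in reverse'' --- is not valid. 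The condition $\sqnn{V_n^Y}\in\vv_o(Y)$ quantifies over \emph{all} test sequences $M_n^Y\subset V_n^Y$ and demands $\ltu_{\nom}M_n^Y\cap\bigcap_{\nom}V_n^Y\neq\es$ for each of them; knowing one point of $Y$ in $\bigcap_{\nom}V_n^X$ gives no control over where $\ltu_{\nom}M_n^Y$ sits, and Part (1)'s transfer does not reverse: it sends a point of the subspace upper limit into the ambient one, and a point of $\ltu_{\nom}M_n^X\cap\bigcap_{\nom}V_n^X$ need not lie in $Y$, let alone in $\ltu_{\nom}M_n^Y$.

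The missing mechanism, which is the actual content of the paper's proof, is this: $C$-density is an \emph{equivalence of local finiteness} for countable families of open sets, and $\ltu_{\nom}M_n=\es$ exactly when $\iset{M_n:\nom}$ is locally finite. Hence for a lifted test sequence ($M_n^X\cap Y=M_n^Y$, $M_n^X\subset V_n^X$), the hypothesis $\ltu_{\nom}M_n^X\neq\es$ coming from $\sqnn{V_n^X}\in\vv_o(X)$ yields $\ltu_{\nom}M_n^Y\neq\es$ \emph{in $Y$}. Quasi-regularity then enters through Proposition \ref{pbmg5}\,(2), applied to player $\al$: replacing $\al$'s defeating strategy on $X$ by a regular one gives $\cl{U_{n+1}^X}\subset V_n^X$, whence $\ltu_{\nom}M_n^Y\subset\bigcap_{\nom}\left(\cl{V_{n+1}^X}\cap Y\right)\subset\bigcap_{\nom}V_n^Y$, so nonemptiness of the upper limit already places it inside the intersection. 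Your sketch instead tries to impose the closure condition on $\be$'s lifted moves ($\cl{V_n^X}\subset U_n^X$ with $V_n^X\cap Y=V_n^Y$), which is in general impossible (take $V_n^Y=U_n^Y$) and is not what Proposition \ref{pbmg5} provides --- that proposition regularizes $\al$'s strategy, not $\be$'s. So the overall architecture of your Part (2) is salvageable, but the two decisive steps (the downward $\ltu$ transfer via $C$-density, and the confinement $\ltu_{\nom}M_n^Y\subset\bigcap_{\nom}V_n^Y$ via a regular strategy for $\al$) are absent and are replaced by an appeal to a reversal that does not exist.
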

\begin{proof}
(1) Each strategy of player $\al$ in the game on $Y$ is assigned a strategy on $X$. Let $U_0,V_0,\dots $  be the open subsets of $X$ constructed on the $n$th move. According to the strategy on $Y$, the player $\al$ chooses a set $U_n'\subset Y$ open in $Y$ depending on the sets $U_0\cap Y,V_0 \cap Y,\dots $\,. The open set $U_n\subset X$ is chosen in such a way that $U_n \cap Y= U_n'$ and $U_n\subset V_{n-1}$. If $\al$ wins the $Y$ game, then it wins the $X$ game as well.

(2) By virtue of (1), it suffices to show that if $Y$ is $C$-dense in $X$, then if $X$ is a $\ggg$-nonmeager ($\ggg$-Baire) space, then so is $Y$. By Proposition \ref{pbmg5}, $\al$ has a regular strategy on $X$. The regular strategy of player $\al$ in the game on $X$ is associated with the strategy on $Y$. Let open sets $U_0',V_0',\dots $ of the space $Y$ be constructed on the $n$th move. In accordance with the strategy on $X$, player $\al$ chooses a set $U_n\subset \cl{U_n}\subset V_{n-1}$ open in $X$ depending on the sets $U_0=\Int{\cl{U_0'}},V_0=\Int{\cl{V_0'}},\dots$\,. We set $U_n'=U_n\cap Y$. If $\al$ wins the $X$ game, then it wins the $Y$ game as well.
\end{proof}

%pbmb1 pbmb3
Propositions \ref{pbmb1-1}, \ref{pbmb2-1}, and \ref{pbmb2} and Theorem \ref{tbmb-bm} imply

\begin{proposition} \label{pbmb4}
Let $X$ be a space.
In the diagrams below, the arrow
\[
A \to B
\]
means that
\begin{itemize}
\item[\rm (1)] if $X$ is an $A$-nonmeager space, then $X$ is a $B$-nonmeager space;
\item[\rm (2)] if $X$ is an $A$-Baire space, then $X$ is a $B$-Baire space;
\item[\rm (3)] if $X$ is an $A$-space, then $X$ is a $B$-space.
\end{itemize}
The bottom arrow means that $A$-nonmeager and $A$-Baire imply nonmeager and Baire.

{
\def\b#1{\gbm_{#1}}
\def\o#1#2{\god_{#1,#2}}
\def\a#1{\arrow[#1]}
\[
\begin{tikzcd}
&\b f\a r & \b k \a{ddr}\a{dr}&
\\
\b o \a{ddr} &&& \b p \a{lll} \a{ddl}
\\
\o ok \a{dr} &&& \o pk \a{lll}\a{dl}
\\
&\o ol \a d & \o pl \a l&
\\
& \gbm &&
\end{tikzcd}
\]}
\end{proposition}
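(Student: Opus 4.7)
The plan is to verify each arrow in the diagram by exhibiting a set-theoretic inclusion between the relevant families, or by a direct strategy simulation, and then invoking Propositions \ref{pbmb1-1}, \ref{pbmb2-1}, \ref{pbmb2}, or Theorem \ref{tbmb-bm}. These three propositions handle, respectively, enlarging $\cV$ on the $\gbm$-side, shrinking $\pn$ or enlarging $\cW$ on the $\god$-side, and converting a $\gbm(\vv_w(X,\pn,\cW))$-property into the corresponding $\god(\pn,\cW)$-property.

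First I will establish the chain $\vv_f(X)\subset\vv_k(X)\subset\vv_p(X)\subset\vv_o(X)$ and $\ww_k(X)\subset\ww_l(X)$. The first inclusion uses that in a $T_1$ space, a sequence $\sqnn U$ forming a base at a point $x$ satisfies $\bigcap_{\nom} U_n=\sset x$, which is compact and outer-based by $\sqnn U$. The second is an accumulation argument: if $M=\bigcap_{\nom} V_n$ is compact with outer base $\sqnn V$, then any $x_n\in V_n$ is eventually inside every open neighbourhood of $M$ (finite subcover plus a tail), so $\ltu_{\nom} x_n\cap M\neq\es$. The third is immediate, since any cluster point of open sets $M_n\subset V_n$ is a cluster point of any pointwise selection $x_n\in M_n$. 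The inclusion $\ww_k\subset\ww_l$ holds because the $p$-limit witnessing the LSQ condition lies in $\ltu_{\nom} M_n\cap\bigcap_{\nom} V_n$. Combining these inclusions with Proposition \ref{pbmb1-1} delivers the arrows $\gbm_f\to\gbm_k\to\gbm_p\to\gbm_o$, and combining with Proposition \ref{pbmb2-1} (with $\pn_1=\pn_2$ fixed) delivers $\god_{ok}\to\god_{ol}$ and $\god_{pk}\to\god_{pl}$.

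The diagonal arrows $\gbm_o\to\god_{ol}$ and $\gbm_p\to\god_{pl}$ follow directly from Proposition \ref{pbmb2}, since by definition $\vv_o=\vv_w(X,\pn_o,\ww_l)$ and $\vv_p=\vv_w(X,\pn_p,\ww_l)$. For $\gbm_k\to\god_{pk}$ I need the extra inclusion $\vv_k(X)\subset\vv_w(X,\pn_p,\ww_k)$: given $\sqnn V\in\vv_k$ with $M=\bigcap_{\nom} V_n$ compact and any $x_n\in V_n$, the set $K=M\cup\sset{x_n:\nom}$ is compact (a finite subcover of $M$ absorbs the tail of $(x_n)$ by the outer-base property), hence every $p\in\oms$ admits a $p$-limit in $K$, and Hausdorff separation of $M$ from any point outside $M$ forces this limit into $M$. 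Then Propositions \ref{pbmb1-1} and \ref{pbmb2} chain to yield $\gbm_k\to\god_{pk}$.

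The remaining arrows $\god_{pk}\to\god_{ok}$ and $\god_{pl}\to\god_{ol}$ do not fit Proposition \ref{pbmb2-1} because $\pn_o$ and $\pn_p$ are incomparable, so I argue by direct simulation: any winning strategy of $\be$ in $DO(X,\pn_o,\ww_q)$ ($q\in\sset{l,k}$) is mimicked in $DO(X,\pn_p,\ww_q)$ by selecting some $x_n\in M_n$ from each open response $M_n$, and the inclusion $\ltu_{\nom}\sset{x_n}\subset\ltu_{\nom} M_n$ ensures that the $\ww_q$-failure witnessing $\be$'s victory transfers. Finally, the bottom arrow from $\god_{ol}$ to plain Baire (and nonmeager) follows from $\ww_l\subset\ww_v(X,\vvbm(X))$, Proposition \ref{pbmb2-1}, the equivalence $DO(X,\pn_o,\ww_v(X,\vvbm(X)))\sim MB(X,\vvbm(X))$ of Proposition \ref{pbmg3}, and Theorem \ref{tbmb-bm}. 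The trickiest step will be the inclusion $\vv_k\subset\vv_w(X,\pn_p,\ww_k)$ underlying $\gbm_k\to\god_{pk}$, which hinges on the compactness-and-Hausdorff argument forcing every $p$-limit to lie in $M$.
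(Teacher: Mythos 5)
Your proposal is correct and takes essentially the route the paper intends: the paper's own proof of Proposition \ref{pbmb4} consists solely of citing Propositions \ref{pbmb1-1}, \ref{pbmb2-1}, \ref{pbmb2} and Theorem \ref{tbmb-bm}, and your chain of family inclusions, together with the direct simulation for $\god_{p,k}\to\god_{o,k}$ and $\god_{p,l}\to\god_{o,l}$ (which, as you rightly note, Proposition \ref{pbmb2-1} does not cover, since neither of $\pn_o(X)$, $\pn_p(X)$ contains the other), supplies exactly the details the paper leaves implicit. Three small remarks. First, in justifying $\vv_p(X)\subset\vv_o(X)$ you state the relation between upper limits backwards: what is needed (and true) is $\ltu_{\nom}\sset{x_n}\subset\ltu_{\nom}M_n$ for $x_n\in M_n$, not the reverse. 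Second, the Hausdorff hypothesis in your key step $\vv_k(X)\subset\vv_w(X,\pn_p(X),\ww_k(X))$ is avoidable, and the paper imposes no separation axiom here: if no point of the compact set $M=\bigcap_{\nom}V_n$ were a $p$-limit of $\sqnn x$, cover $M$ by finitely many open sets $U$ with $\set{n:x_n\in U}\notin p$; their union $W$ contains some $V_m$ by the outer-base property, hence a tail of $\sqnn x$, so $\set{n:x_n\in W}$ is cofinite and therefore belongs to $p$ --- a contradiction. Third, for item (3) of the proposition (the ``$A$-space implies $B$-space'' implications) note that Proposition \ref{pbmb2} as stated only addresses the nonmeager and Baire versions, and your simulation for $\god_{p,q}\to\god_{o,q}$ is described only from $\be$'s side; the ``-space'' versions require the analogous transfer of $\al$'s winning strategies (via Proposition \ref{pbmg3} and the inclusion of the starred families, respectively by having $\al$ replace $\be$'s open $M_n$ by a point of it before consulting its $\pn_p$-strategy) --- an omission you share with the paper's one-line proof.
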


%%% end file('../en/in/bmb.tex') %%%
%%% begin file('../en/in/bms.tex') %%%
\section{$\gbm_r$ and $\god_{t,q}$ spaces}\label{sec-bms}

In this section, we study  $\gbm_r$ and $\god_{t,q}$ spaces. The relationship between  these spaces is shown by the following statement, which follows from Propositions \ref{pbmb1} and \ref{pbmb3}.

\begin{proposition}\label{pbms1}
Let $X$ be a space, and let $\ggg\in\sset{\gbm_r,\god_{t,q}}$, where $r\in\sset{o,p,f,k}$, $t\in\sset{o,p}$ and $q\in\sset{l,k}$.
\begin{itemize}
\item[\rm(1)]
If $X$ is nonmeager and $X$ is a $\ggg$-space, then $X$ is $\ggg$-nonmeager.
\item[\rm(2)]
If $X$ is a Baire space and $X$ is a $\ggg$-space, then $X$ is a $\ggg$-Baire space.
\end{itemize}
\end{proposition}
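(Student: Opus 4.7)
The plan is to observe that Proposition \ref{pbms1} is essentially a bookkeeping consequence of Propositions \ref{pbmb1}, \ref{pbmb3}, and \ref{pbmb4-1}, which have already done the substantive work; I will just need to match the definitions correctly in each case.

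First I would split into the two cases indicated by the hypothesis $\ggg \in \sset{\gbm_r, \god_{t,q}}$. In the case $\ggg = \gbm_r$, I set $\cV = \vv_r(X)$ and $\cV^* = \cV \cup \vvbmi(X)$. By Definition \ref{dbmb1}, the property that $X$ is $\gbm_r$-nonmeager (respectively $\gbm_r$-Baire, $\gbm_r$-space) is by construction the same as $X$ being $\gbm(\cV)$-nonmeager (respectively $\gbm(\cV)$-Baire, $\gbm(\cV)$-space), since the games $BM_r(X)$, $MB_r(X)$, $BM_r^*(X)$ are precisely $BM(X,\cV)$, $MB(X,\cV)$, $BM(X,\cV^*)$. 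Proposition \ref{pbmb4-1} gives that $\cV = \vv_r(X)$ is monolithic for $r \in \sset{o,p,f,k}$, so the hypotheses of Proposition \ref{pbmb1} are met, and both (1) and (2) follow directly from its two parts.

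In the case $\ggg = \god_{t,q}$, I set $\pn = \pn_t(X)$ and $\cW = \ww_q(X)$, with $\cW^* = \cW \cup \ww_e(X)$. Again by Definition \ref{dbmb1}, the three properties $\god_{t,q}$-nonmeager, $\god_{t,q}$-Baire, $\god_{t,q}$-space unfold verbatim as the $\god(\pn,\cW)$-properties from the earlier definition, because $OD_{t,q}(X)$, $DO_{t,q}(X)$, $OD_{t,q}^*(X)$ coincide with $OD(X,\pn,\cW)$, $DO(X,\pn,\cW)$, $OD(X,\pn,\cW^*)$. Proposition \ref{pbmb4-1} ensures $\cW = \ww_q(X)$ is monolithic for $q \in \sset{l,k}$, and $\pn_t(X)$ is a $\pi$-net of $X$ (for $t=o$ it is $\tps$, and for $t=p$ it is $\set{\sset x : x \in X}$, which is a $\pi$-net in every space). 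Thus Proposition \ref{pbmb3} applies and yields (1) and (2) in this case.

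There is no real obstacle here; the content has already been established in Propositions \ref{pbmb1} and \ref{pbmb3}, and the monolithicity step has been separated out in Proposition \ref{pbmb4-1}. The only thing one must be careful about is to check that the two $\pi$-nets $\pn_o(X)$ and $\pn_p(X)$ really are $\pi$-nets (trivial) and that the bookkeeping between the names $BM_r$, $MB_r$, $OD_{t,q}$, $DO_{t,q}$ and the generic $BM(X,\cV)$, $MB(X,\cV)$, $OD(X,\pn,\cW)$, $DO(X,\pn,\cW)$ lines up as claimed.
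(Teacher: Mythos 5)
Your proposal is correct and follows exactly the paper's route: the paper derives Proposition \ref{pbms1} directly from Propositions \ref{pbmb1} and \ref{pbmb3}, with the monolithicity of $\vv_r(X)$ and $\ww_q(X)$ supplied by Proposition \ref{pbmb4-1}, which is precisely the case split and bookkeeping you describe. Your write-up merely makes explicit the definitional unwinding that the paper leaves implicit.
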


\begin{proposition}\label{pbms2}
If $X$ is a space, $\uu,\vu\in\UU(X)$, $r\in\sset{o,p,f,k}$, $t\in\sset{o,p}$ and $q\in\sset{l,k}$, then
\begin{align*}
BM_r^*(X)&\sim BM(X,\cV^*;\uu,\vu),
&
OD_{t,q}^*(X)&\sim OD(X,\pn_t(X),\cW^*;\uu,\vu),
\end{align*}
where $\cV^*= \vv_r(X)\cup \vvbmi(X)$ and $\cW^*= \ww_q(X)\cup \ww_e(X)$. If $\cP$ is a $\pi$-base in $X$, then
 $\uu_p(X,\cP)\in\UU(X)$, and if the space $X$ is quasi-regular, then $\uu_r(X),\uu_{pr}(X,\cP)\in\UU(X )$.
 Moreover, the following assertions hold:
\begin{itemize}
\item[\rm(1)]
$X\in\gbm_r$ if and only if $BM(X,\cV^*;\uu,\vu)$ is $\al$-favorable.
\item[\rm(2)]
$X\in\god_{t,q}$ if and only if $OD(X,\pn_t(X),\cW^*;\uu,\vu)$ is $\al$-favorable.
\item[\rm(3)]
$X\in\god_{o,q}$ if and only if $OD(X,\cP,\cW^*;\uu,\vu)$ is $\al$-favorable if and only if $OD(X,\cP,\cW^*;\wt\uu,\wt\uu)$ is $\al$-favorable, where $\wt\uu=\uu_p(X,\cP)$.
\item[\rm(4)]
if $X\in\gbm_r$, then for any $\nom$ there exists a winning strategy $s$ for player $\al$ such that the following condition is satisfied: if player $\be$ chooses $V_k=X$ at step $k<n$, then $\al$ chooses $U_{k+1}=X$.
\item[\rm(5)]
if $X\in\god_{t,q}$, then for any $\nom$ there exists a winning strategy $s$ for player $\al$ such that the condition is satisfied: if player $\be$ chooses $V_k=X$  at step $k<n$, then $\al$ chooses $U_{k+1}=X$.
\item[\rm(6)]
if $X\in\god_{o,q}$, then there exists a winning strategy $s$ for player $\al$ such that the following condition is satisfied: if player $\be$  chooses $V_n=X$ at step $n$ and $M_n=X$, then $\al$ chooses $U_{n+1}=X$.
\end{itemize}
\end{proposition}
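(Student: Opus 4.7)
The proposition splits into three groups, which I would handle in turn.

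For parts (1) and (2), the key step is to verify that the augmented families $\cV^*=\vv_r(X)\cup\vvbmi(X)$ and $\cW^*=\ww_q(X)\cup\ww_e(X)$ are monolithic. The $\vv_r$ and $\ww_q$ cases reduce to Proposition~\ref{pbmb4-1}, while for $\vvbmi$ and $\ww_e$ the inclusion $U_{n+1}\subset V_n$ forces $\bigcap U_n\subset\bigcap V_n=\es$, which gives $\sqnn U\in\vvbmi$ (resp.\ $\sqn{U_n,M_n}\in\ww_e$). Monolithicity together with Propositions~\ref{pbmg1} and~\ref{pbmg2} then yields the required equivalences. The memberships $\uu_p(X,\cP),\uu_r(X),\uu_{pr}(X,\cP)\in\UU(X)$ follow routinely from the definitions (using quasi-regularity for $\uu_r$ and $\uu_{pr}$).

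For (3) I combine (2) with Proposition~\ref{pbmg4} applied to $\pn_1=\pn_o(X)=\tps$ and $\pn_2=\cP$. Condition~(1) of~\ref{pbmg4} is the $\pi$-base property of $\cP$; condition~(2) for $\cW^*$ is immediate for $\ww_e$ (which is independent of $M_n$), uses the monotonicity $M_n\subset L_n\Rightarrow\ltu M_n\subset\ltu L_n$ for $\ww_l$, and inherits the same witness $x_n\in M_n\subset L_n$ for $\ww_k$. The second equivalence, passing to $\wt\uu=\uu_p(X,\cP)$, is another direct application of~\ref{pbmg2}.

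For parts (4)--(6), given a winning strategy $\sigma$ for $\al$, I build $s$ by maintaining a \emph{virtual} play recording only the non-trivial $\be$-moves, to which $\sigma$ is applied, while $s$ responds to each trivial $\be$-move by $U_{k+1}=X$. In (4) and (5) the rule is required only for $k<n$, so the virtual play is started at the first index $k_0\le n$ at which $V_{k_0}\neq X$ (or $k_0=n$ if no earlier such index exists); since $U_{k_0}=X$ in the actual play, $\sigma$ can be invoked legitimately with $V_{k_0}$ as its first $\be$-move. The resulting sequence prepends finitely many copies of $X$ (resp., of $(X,X)$-triples) to a winning tail, and preservation of $\cV^*$ (resp.\ $\cW^*$) follows because the defining conditions of $\vv_r$ and $\ww_q$---intersections, upper limits, $p$-limits, and base/outer-base properties---depend only on cofinite tails.

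Part (6) is the main obstacle, since trivial $\be$-moves can occur at any index and may interleave with non-trivial ones whenever $\al$ plays $U=X$. The plan is: by~(2), the game is equivalent to one using $\uu'\in\UU(X)$ defined by $\uu'(X)=\tps\setminus\{X\}$ and $\uu'(U)=\uu_t(X)(U)$ for $U\neq X$ (which lies in $\UU(X)$ provided $X$ admits a proper nonempty open subset). A winning strategy $\sigma$ in the $\uu'$-game never outputs $X$, so once $\be$ makes any non-trivial move, the corresponding non-$X$ response by $s$ locks all subsequent $V_n,U_n,M_n$ into proper subsets of $X$ and rules out any later trivial move. Thus trivial steps form an initial consecutive block, reducing the situation to that of (5); preservation of $\cW^*$ under prepending an arbitrary (possibly infinite) block of $(X,X)$-triples is then verified directly from the definitions: the tail intersection equals the full intersection, $\ltu M_n=X$ whenever infinitely many $M_n=X$, and any witness sequence for $\ww_k$ on the tail extends by arbitrary $x_n\in X$ on the prepended block.
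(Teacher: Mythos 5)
Your treatment of (1)--(5) coincides with the paper's: the same reduction via Propositions \ref{pbmg1}, \ref{pbmg2}, \ref{pbmb4-1} and \ref{pbmg4} (with the same easy checks that $\vvbmi(X)$ and $\ww_e(X)$ are monolithic and that $\ww_q$ is monotone in the sets $M_n$), and the same ``answer $X$ to trivial moves, then restart the given winning strategy at the first non-trivial index'' construction for (4) and (5), which is sound there because in those items triviality of $V_k$ automatically confines the trivial moves to an initial block. For (6) you take a genuinely different route, and it is the more careful one. The paper's proof of (6) sets $U_{k+1}=X$ when $V_k=M_k=X$ and otherwise applies the old strategy $\ts$ to the entire history from the first non-trivial index $n_0$; this tacitly assumes the ``otherwise'' branch is taken at every step after $n_0$, which can fail: if the non-triviality at $n_0$ is only $M_{n_0}\neq X$ while $V_{n_0}=X$ and $\ts$ happens to answer with $X$, then $\be$ may again play $V=M=X$, the first branch overrides $\ts$, and the history later fed to $\ts$ is no longer a $\ts$-play. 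Your preliminary passage to $\uu'$ with $\uu'(X)=\tps\setminus\sset X$ (legitimate by Proposition \ref{pbmg2} once $\cW^*$ is monolithic, and excluding only the indiscrete case, where the claim is vacuous) forces the strategy never to return $X$ after a non-trivial move, so the trivial moves do form an initial consecutive block and the interleaving problem disappears; your verification that prepending a finite or infinite block of $(X,X)$-moves preserves $\ww_e$, $\ww_l$ and $\ww_k$ is correct. In short, your argument reproduces the paper's proof where it is sound and repairs the one place where its bookkeeping is loose.
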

\begin{proof}
The equivalence of games and assertions (1) and (2) follow from Propositions \ref{pbmg1} and \ref{pbmb4-1}. Assertion (3) follows from Proposition \ref{pbmg4}.

Let us prove (4). Let $\ts$ be a winning strategy for $\al$. We define a strategy $s$. Suppose that  $k>0$ and on the first $k$ moves sets $U_0$, $V_0$, $U_1$, \dots , $U_k$, $V_k$ are chosen.
Player $\alpha$ chooses $U_{k+1}$ as prescribed by the strategy $s$. If $V_k=X$ and $k<n$, then $U_{k+1}=X$. Otherwise, 
$U_{k+1}=\ts(U_{n_0},V_{n_0},U_ {n_0+1},\dots ,U_k,V_k)$ for $n_0=\min \set{l,n: l<n, V_l\neq X}$.

Let us prove (5). Let $\ts$ be a winning strategy for $\al$. We define a strategy $s$. Suppose that  $k>0$ and  on the first  $k$ moves sets $U_0$, $V_0$, $M_0$, $U_1$, \dots , $U_k$, $V_k$, $M_k$  are selected. Player $\alpha$ chooses $U_{k+1}$ as prescribed by the strategy $s$. If $V_k=X$ and $k<n$, then $U_{k+1}=X$. Otherwise,  $U_{k+1}=\ts(U_{n_0},V_{n_0},M_ {n_0},U_{n_0+1},\dots ,U_k,V_k,M_k)$ for $n_0=\min \set{l,n: l<n, V_l\neq X}$.

Let us prove (6). Let $\ts$ be a winning strategy for $\al$. We define a strategy $s$. Suppose that $k>0$ and  on the first $k$ moves sets $U_0$, $V_0$, $M_0$, $U_1$, \dots , $U_k$, $V_k$, $M_k$. Player $\alpha$  chooses $U_{k+1}$ as prescribed by the strategy $s$. If $V_k=X$ and $M_k=X$, then $U_{k+1}=X$. Otherwise,  $U_{k+1}=\ts(U_{n_0},V_ {n_0},M_{n_0},U_{n_0+1},\dots ,U_k,V_k,M_k)$ for $n_0=\min \set{l<k: V_l\neq X \text{ or }M_l\neq X}$.
\end{proof}

%------------------------------------------------------------------------

Let $D$ be an index set, $(X_\de,\tp_\de)$ be a space, and $\tps_\de=\tp_\de\setminus \sset\es$ for $\de\in D$; we set $ X=\prod_{\de\in D}X_\de$. For $\sq{U_\de}{\de\in D}\in\prod_{\de\in D}{\tps_\de}$ we denote $\supp (\sq{U_\de}{\de\in D})=\set{\de\in D: U_\de\neq X_\de}$. The family
\[
\pbase{\sq{X_\de}{\de\in D}}\eqdef \set{\prod_{\de\in D}U_\de: \sq{U_\de}{\de\in D} \in\prod_{\de\in D}{\tps_\de}\text{ and } |\supp (\sq{U_\de}{\de\in D})|<\om}
\]
is a base of the space $X$.

Proposition \ref{pbms2} implies the following assertion.
\begin{proposition}\label{pbms3}
Let $r\in\sset{o,p,f,k}$, $q\in\sset{l,k}$,
$D$ be an index set, $(X_\de,\tp_\de)$ be a space for  each $\de\in D$,  and $X=\prod_{\de\in D}X_\de$. Let $\cB=\pbase{\sq{X_\de}{\de\in D}}$, $\cV^*= \vv_r(X)\cup \vvbmi(X)$, $\cW^* = \ww_q(X)\cup \ww_e(X)$, and $\uu=\uu_p(X,\cB)$.
 Then
\begin{itemize}
\item[\rm(1)]
$X\in\gbm_r$ if and only if $BM(X,\cV^*;\uu,\uu)$ is $\al$-favorable;
\item[\rm(2)]
$X\in\god_{p,q}$ if and only if $OD(X,\pn_p(X),\cW^*;\uu,\uu)$ is $\al$-favorable;
\item[\rm(3)]
$X\in\god_{o,q}$ if and only if $OD(X,\cB,\cW^*;\uu,\uu)$ is $\al$-favorable.
\end{itemize}
\end{proposition}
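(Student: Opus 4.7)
The plan is to treat Proposition \ref{pbms3} as a direct specialization of Proposition \ref{pbms2}, substituting the canonical product base $\cB = \pbase{\sq{X_\de}{\de \in D}}$ for the abstract $\pi$-base $\cP$ appearing there, and taking $\uu = \vu = \uu_p(X, \cB)$.

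First I would check the one hypothesis that really needs verification, namely that the candidate $\uu_p(X, \cB)$ lies in $\UU(X)$. Since $\cB$ is, by its very definition, the canonical base of the Tychonoff product topology on $X$, it is in particular a $\pi$-base of $X$; and then for each nonempty open $U \subset X$ the family
\[
\uu_p(X, \cB)(U) = \set{V \in \cB : V \subset U}
\]
is a $\pi$-base of $U$. Hence $\uu \eqdef \uu_p(X, \cB)$ belongs to $\UU(X)$.

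Next, with this in hand, all three assertions are immediate readings of the corresponding parts of Proposition \ref{pbms2}. Part (1) of \ref{pbms3} is exactly \ref{pbms2}(1) applied with the pair $(\uu, \vu) = (\uu_p(X, \cB), \uu_p(X, \cB))$; part (2) is \ref{pbms2}(2) specialized to $t = p$ with the same choice of $(\uu, \vu)$; and part (3) is the second equivalence in \ref{pbms2}(3) applied with $\cP = \cB$, since then $\wt\uu = \uu_p(X, \cP)$ coincides with our $\uu$.

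The main and only obstacle is the routine check that $\cB$ gives rise to an element of $\UU(X)$; once that observation is in place, the three equivalences drop out of \ref{pbms2} with no further work, and there is nothing combinatorial or strategic left to verify about the games themselves.
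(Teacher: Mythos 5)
Your proposal is correct and matches the paper exactly: the paper derives Proposition \ref{pbms3} directly from Proposition \ref{pbms2} by specializing to the canonical product base $\cB$ and to $\uu=\vu=\uu_p(X,\cB)$, which is precisely your argument. The only verification needed, that $\uu_p(X,\cB)\in\UU(X)$ because $\cB$ is a base (hence a $\pi$-base) of the product topology, is the one you supply.
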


\begin{proposition}\label{pbms4}
If  $r\in\sset{o,p}$,  and $X\in\gbm_r$, $Y\in \gbm_k$, then $X\times Y\in \gbm_r$.
\end{proposition}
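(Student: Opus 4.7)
By Proposition~\ref{pbms2}(1), fix winning $\alpha$-strategies $s_X$ in $BM_r^*(X)$ and $s_Y$ in $BM_k^*(Y)$; by the same proposition applied to $X\times Y$, it suffices to win $BM(X\times Y,\cV^*;\uu,\uu)$ with $\cV^*=\vv_r(X\times Y)\cup\vvbmi(X\times Y)$, $\cB$ the box base of $X\times Y$, and $\uu=\uu_p(X\times Y,\cB)$. In this variant of the game both players are confined to basic open boxes, so every $\beta$-move has the form $V_n=A_n\times B_n$ with $A_n\subset X$ and $B_n\subset Y$ open nonempty.

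Let $\alpha$ play coordinate-wise: $U_{n+1}=U^X_{n+1}\times U^Y_{n+1}$, where $U^X_{n+1}=s_X(A_0,\ldots,A_n)$ and $U^Y_{n+1}=s_Y(B_0,\ldots,B_n)$. The inclusions $A_n\subset U^X_n$ and $B_n\subset U^Y_n$ ensure that the $X$- and $Y$-projections of the play form legal runs of $BM_r^*(X)$ and $BM_k^*(Y)$ with $\alpha$ following $s_X$ and $s_Y$ respectively. Hence $s_X$ forces $\sqnn A\in\vv_r(X)\cup\vvbmi(X)$ and $s_Y$ forces $\sqnn B\in\vv_k(Y)\cup\vvbmi(Y)$.

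If $\bigcap_\nom A_n=\es$ or $\bigcap_\nom B_n=\es$, then $\bigcap_\nom(A_n\times B_n)=\es$ and the outcome already belongs to $\vvbmi(X\times Y)\subset\cV^*$. Otherwise $\sqnn A\in\vv_r(X)$ and $\sqnn B\in\vv_k(Y)$; write $M=\bigcap_\nom B_n$, which is compact with $\sqnn B$ as its outer base. I claim $\sqn{A_n\times B_n}\in\vv_r(X\times Y)$. For $r=p$: given any $(x_n,y_n)\in A_n\times B_n$ and any $p\in\oms$, projection produces coordinate $p$-limits $x\in\bigcap_\nom A_n$ from the $\vv_p(X)$-property of $\sqnn A$, and $y\in M$ from the outer-base argument (see the obstacle). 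For $r=o$: any open nonempty $N_n\subset A_n\times B_n$ projects to open nonempty $P_n=\pi_X(N_n)\subset A_n$ and $Q_n=\pi_Y(N_n)\subset B_n$ with $\ltu_\nom N_n\subset\ltu_\nom P_n\times\ltu_\nom Q_n$; then the $\vv_o(X)$-property places $\ltu_\nom P_n$ in $\bigcap_\nom A_n$ and the outer-base argument places $\ltu_\nom Q_n$ in $M$. Either way $\sqn{A_n\times B_n}\in\vv_r(X\times Y)$.

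The main obstacle is the auxiliary \emph{outer-base lemma}: any cluster point of a sequence $y_n\in B_n$ must lie in $M$. The idea is to separate $y\notin M$ from the compact set $M$ by disjoint open sets (using a regularity assumption implicit in working with compact cores), then use the outer-base property to sink some $B_{n_0}$ into the neighbourhood of $M$ disjoint from a neighbourhood of $y$, contradicting $y_n\in B_n\subset B_{n_0}$ for $n\geq n_0$. Everything else is bookkeeping for parallel coordinate play.
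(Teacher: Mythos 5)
Your strategy is exactly the paper's: reduce via Proposition \ref{pbms3} to the variant of the game played on the box $\pi$-base, so that $\be$'s moves are boxes $V_n=A_n\times B_n$, and let $\al$ answer coordinate-wise with the winning strategies from $BM_r^*(X)$ and $BM_k^*(Y)$. The paper's own proof consists of precisely this reduction and the coordinate-wise definition of $U_n$, and stops there; the payoff verification you attempt to supply is left implicit in the paper.

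Two slips in that verification should be repaired. First, your ``outer-base lemma'' is misstated and its proof uses a hypothesis you do not have: without regularity a cluster point of a sequence $y_n\in B_n$ need only lie in $\bigcap_{\nom}\cl{B_n}$, which may be strictly larger than $M=\bigcap_{\nom}B_n$. The statement you actually need holds with no separation axioms: for \emph{every} $p\in\oms$ the limit $\limp p y_n$ exists and lies in $M$. Indeed, otherwise each $z\in M$ has a neighborhood $W_z$ with $\set{n: y_n\in W_z}\notin p$; a finite subcover of the compact set $M$ gives an open $W\supset M$ with $\set{n: y_n\in W}\notin p$, while the outer-base property yields $B_{n_0}\subset W$, so $y_n\in W$ for all $n\ge n_0$ and $\set{n: y_n\in W}$ is cofinite, hence in $p$ --- a contradiction. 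Second, for $r=o$ the inclusion $\ltu_{\nom}N_n\subset\ltu_{\nom}P_n\times\ltu_{\nom}Q_n$ points the wrong way: it only bounds $\ltu_{\nom}N_n$ from above, whereas the condition $(\ww_l)$ defining $\vv_o$ demands that $\ltu_{\nom}N_n\cap\bigcap_{\nom}(A_n\times B_n)$ be \emph{nonempty}, and your argument never produces a point of $\ltu_{\nom}N_n$. Instead shrink each $N_n$ to a nonempty open box $P_n\times Q_n\subset N_n$, take $x\in\ltu_{\nom}P_n\cap\bigcap_{\nom}A_n$ using $\vv_o(X)$, extend the filter generated by the sets $\set{n: P_n\cap U\neq\es}$ ($U$ a neighborhood of $x$) to an ultrafilter $p\in\oms$, pick $y_n\in Q_n$ and put $y=\limp p y_n\in M$; then $(x,y)$ lies in $\ltu_{\nom}(P_n\times Q_n)\cap\bigcap_{\nom}(A_n\times B_n)$, which is contained in $\ltu_{\nom}N_n\cap\bigcap_{\nom}(A_n\times B_n)$. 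The same ultrafilter device (choose $p$ realizing a cluster point of $(x_n)$ guaranteed by $\vv_p(X)$, then apply the compactness lemma to $(y_n)$) repairs the $r=p$ case, where $\vv_p(X)$ gives a cluster point but not a $p$-limit for an arbitrarily prescribed $p$.
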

\begin{proof} Let $s_X$ and $s_Y$ be  winning strategies for $\al$ on $X$ and $Y$, respectively.
Let us describe a winning strategy for $\al$. It follows from Proposition \ref{pbms3} that it suffices to consider the case when the player $\be$ chooses sets of the form $V_n=V_{X,n}\times V_{Y,n}\subset X\times Y$. At the $n$th step, we put $U_{X,n}=s_X(U_{X,0},V_{X,0},\dots ,V_{X,n-1})$, $U_ {Y,n}=s_Y(U_{Y,0},V_{Y,0},\dots ,V_{Y,n-1})$, and $U_n=U_{X,n}\times U_{ Y,n}$.
\end{proof}

\begin{proposition}\label{pbms5}
Let $r\in\sset{k,f}$, and let $X_n\in\gbm_r$ for $\nom$. Then $X=\prod_\nom X_n\in \gbm_r$.
\end{proposition}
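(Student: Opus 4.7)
The plan is to construct a winning strategy for player $\al$ in the product game $BM_r^*(X)$ by running local strategies coordinatewise. By Proposition \ref{pbms3}, it suffices to produce a winning strategy for $\al$ in $BM(X,\cV^*;\uu_p(X,\cB),\uu_p(X,\cB))$, where $\cV^*=\vv_r(X)\cup\vvbmi(X)$ and $\cB=\pbase{\sq{X_n}{\nom}}$; this reformulation lets us assume that both players always play basic open boxes of finite support. For each $n\in\om$ fix, via Proposition \ref{pbms2}(4) with threshold $n$, a winning strategy $s_n$ for $\al$ in $BM_r^*(X_n)$ with the additional property that whenever $\be$ plays $X_n$ at a step $k<n$, the strategy $s_n$ responds with $X_n$.

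Define $\al$'s product strategy by taking, given the history $U_0,V_0,\ldots,V_{m-1}$ with $V_i\in\cB$,
\[
U_{m,n}=s_n(V_{0,n},V_{1,n},\ldots,V_{m-1,n})\quad(n\in\om),\qquad U_m=\prod_{n\in\om}U_{m,n}.
\]
The set $U_m$ lies in $\cB$: whenever $n\geq m$ and $n\notin\bigcup_{i<m}\supp(V_i)$, one has $V_{i,n}=X_n$ for every $i<m$ and $m-1<n$, so the patience property of $s_n$ forces $U_{m,n}=X_n$. Thus $\supp(U_m)\subset\bigcup_{i<m}\supp(V_i)\cup\{0,1,\ldots,m-1\}$ is finite, and clearly $U_m\subset V_{m-1}$.

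For any resulting play $(U_i,V_i)_{i\in\om}$, the projected sequence $(U_{i,n},V_{i,n})_{i\in\om}$ on coordinate $n$ is a legal play of $BM_r^*(X_n)$ in which $\al$ follows $s_n$; hence for each $n$ either $\bigcap_iV_{i,n}=\es$, so that $\bigcap_iV_i=\es$ and $(V_i)_{i\in\om}\in\vvbmi(X)\subset\cV^*$ secures the win, or $(V_{i,n})_{i\in\om}\in\vv_r(X_n)$. Assume the second alternative holds for every $n$. For $r=k$, each $K_n=\bigcap_iV_{i,n}$ is compact with outer base $(V_{i,n})_{i\in\om}$, and $K=\prod_nK_n=\bigcap_iV_i$ is compact by Tychonoff; for $r=f$, pick $x_n\in\bigcap_iV_{i,n}$ at which $(V_{i,n})_{i\in\om}$ is a base, and set $x=(x_n)_{n\in\om}\in\bigcap_iV_i$.

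Given an open $W\supset K$ (resp.\ $W\ni x$), using that $\cB$ is a base of $X$ (for $r=f$) or the generalized tube lemma applied to $K\subset W$ (for $r=k$, covering the compact $K$ by finitely many basic boxes in $W$, taking the union $F$ of their supports, and applying the classical tube lemma in the finite product $\prod_{n\in F}X_n$), we find a basic $B=\prod_nW_n\in\cB$ with $K\subset B\subset W$ (resp.\ $x\in B\subset W$) and $F=\supp(B)$ finite. For each $n\in F$, the local outer base (resp.\ base) property at coordinate $n$ supplies $i_n$ with $V_{i_n,n}\subset W_n$; taking $i=\max_{n\in F}i_n$ gives $V_i\subset B\subset W$. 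Hence $(V_i)_{i\in\om}\in\vv_r(X)$, and $\al$ wins. The delicate point is the simultaneous finite-support requirement on $\al$'s moves, which is exactly what Proposition \ref{pbms2}(4) delivers; the tube-lemma extension is the secondary technical ingredient, converting local outer bases on the compact $K_n$'s into an outer base of the compact product $K$.
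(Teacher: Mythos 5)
Your proposal is correct and follows essentially the same route as the paper: reduce to basic boxes via Proposition \ref{pbms3}, take coordinatewise winning strategies $s_n$ with the patience property of Proposition \ref{pbms2}(4) to keep $\al$'s moves of finite support, and play the product strategy. The paper leaves the finite-support check and the verification of the winning condition (Tychonoff plus the tube-lemma construction of an outer base of $\prod_n K_n$, resp.\ the local base at $(x_n)_n$) implicit, and you have simply supplied those details.
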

\begin{proof}
Let us describe a winning strategy for $\al$.
Let $s_n$ be a winning strategy for $\al$ on $X_n$ that satisfies condition (4) of \ref{pbms2}. We set $\cB=\pbase{\sqnn X}$.
It follows from proposition \ref{pbms3} that it suffices to consider the case when the player $\be$ chooses sets of the form $V_k=\prod_\nom V_{n,k}\in\cB$. At the $k$th step, we put $U_{n,k}=s_n(U_{n,0},V_{n,0},\dots ,V_{n,k-1})$ for $\nom$ and $U_k=\prod_\nom U_{n,k}$.
\end{proof}

%Let $(X,\tp)$ be a space, $\tps=\tp\setminus\sset\es$.

\begin{assertion} \label{abng1}
Let $(X,\tp)$ be a quasi-regular space, $\tps=\tp\setminus\sset\es$, $\ga_n\subset \tps$, $\cl{\bigcup\ga_n}=X$ for $ \nom$, $\cV\subset \vv(X)$, and $\cV^*=\cV\cup \vvbmi(X)$. Suppose that the following condition is met:
\begin{itemize}
\item
if $\sqnn U\in\vvr(X)$, $U_0=X$ and for each $n>0$ there exists $W_n\in\ga_n$, such that $U_{n}\subset W_n$, then either $\bigcap_{\nom}U_n=\es$ or $\sqnn U \in\cV$.
\end{itemize}
Then $BM(X,\cV^*)$ is $\al$-favorable.
\end{assertion}
\begin{proof}
Let us describe a winning strategy for $\al$. Let $U_0=X$. For $n>0$, at the $n$th step the player $\al$ chooses $U_n\in\tps$ in such a way that the following conditions are satisfied:
\begin{itemize}
\item $\cl{U_n}\subset V_{n-1}$;
\item $\cl{U_n}\subset W_{n}$ for some $W_n\in\ga_n$.
\end{itemize}
\end{proof}

%------------------------------------------------------------------------

%\begin{proposition}\label{pbms6}
\begin{theorem}\label{tbms1}
\begin{itemize}
\item[\rm (1)] $\gbm_f\subset \gbm_k\subset \gbm_p\subset \gbm_o$.
\item[\rm(2)]
Let $r\in\sset{o,p}$, $X\in\gbm_r$, and $Y\in \gbm_k$. Then $X\times Y\in \gbm_r$.
\item[\rm(3)]
Let $r\in\sset{k,f}$  and $X_n\in\gbm_r$ for $\nom$. Then $\prod_\nom X_n\in \gbm_r$.
\item[\rm(4)]
For $r\in\sset{f,k,p,o}$, if $X\in\gbm_r$ and $U\subset X$ are open subspaces, then $U\in\gbm_r$.
\item[\rm(5)]
For $r\in\sset{f,k,p,o}$, if the space $X$ is locally $\gbm_r$ (that is, any point has a neighborhood $U\in\gbm_r$), then $ X\in\gbm_r$.
\item[\rm(6)]
If $X$ is a quasi-regular space and belongs to one of the classes  $(\gbm_t)$, $t\in\sset{f,k,p,o}$, listed below, then $X$ is a $\gbm_t$-space:
\begin{itemize}
\item[$\mathrm{(\gbm_f)}$] metrizable spaces, Moore spaces, developable space, semiregular $\sigma$-spaces, and semiregular spaces with a countable network;
\item[$\mathrm{(\gbm_k)}$] compact spaces, $p$-spaces, semiregular strongly $\Sigma$-spaces;
\item[$\mathrm{(\gbm_p)}$] countably compact spaces, semiregular $\Sigma$-spaces, $w\D$-spaces;
\item[$\mathrm{(\gbm_o)}$] feebly compact spaces.
\end{itemize}
\end{itemize}
\end{theorem}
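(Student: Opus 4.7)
The plan is to derive the six parts by assembling the material built up so far, with part (6) containing the bulk of the work. Parts (1)--(3) are essentially free: part (1) follows by composing the relevant arrows $\gbm_f\to\gbm_k\to\gbm_p\to\gbm_o$ in the diagram of Proposition \ref{pbmb4}, while parts (2) and (3) are exact restatements of Propositions \ref{pbms4} and \ref{pbms5}.

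For part (4), I would take a winning strategy $\ts$ for $\al$ in $BM_r^*(X)$ and, in $BM_r^*(U)$ after the opening $U_0=U$, simulate an $X$-game in which $\be$'s moves $V_0\supset V_1\supset\dots$ inside $U$ are fed to $\ts$ as $\be$-moves in $X$; the descending chain of $\al$-responses stays inside $V_0\subset U$ automatically, and the defining conditions of $\vv_r$ (diameter control, base at a point, compact outer base) are local properties, so the winning condition in $\vv_r(U)\cup\vvbmi(U)$ transfers verbatim to $\vv_r(X)\cup\vvbmi(X)$. Part (5) builds on (4): cover $X$ by open $\gbm_r$-neighbourhoods $U_x$ and, after $\be$'s first move $V_0$ in $BM_r^*(X)$, let $\al$ pick a nonempty open $U_1\subset V_0\cap U_x$ for some $x$ with $V_0\cap U_x\neq\es$. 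Since $U_1$ is open in the $\gbm_r$-space $U_x$, part (4) gives $U_1\in\gbm_r$, and $\al$ continues with a winning strategy for $BM_r^*(U_1)$ whose output, by the same locality remark, again belongs to $\vv_r(X)\cup\vvbmi(X)$.

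Part (6) is the main task and proceeds by Assertion \ref{abng1}: for each listed class I would exhibit a sequence $(\ga_n)$ of families of open sets with $\cl{\bigcup\ga_n}=X$ so that any chain $\sqnn U\in\vvr(X)$ with $U_n\subset W_n\in\ga_n$ either has empty intersection or lies in $\vv_r(X)$. For the $(\gbm_f)$ classes, take $\ga_n$ to be the family of open sets of diameter $<1/n$ in the metric case, or the $n$-th element of a development in the Moore and developable cases; a chain subordinated to such $\ga_n$ automatically becomes a base at any point of its intersection, giving $\vv_f$. Semiregular $\sigma$-spaces and semiregular spaces with a countable network are handled by using semiregularity (and the regular-strategy refinement of Proposition \ref{pbmg5}) to promote the given network into families of regularly open sets with the same base-refining behaviour. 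For $(\gbm_k)$, plumages of $p$-spaces and the network structure of strongly $\Sigma$-spaces provide $\ga_n$'s under which the intersection of a nonempty chain is compact with $\sqnn U$ an outer base, so the chain lies in $\vv_k$. For $(\gbm_p)$, the open-cover sequences witnessing the $\Sigma$- or $w\D$-property yield $\ga_n$'s ensuring that any sequence of points drawn from the chain accumulates inside $\bigcap V_n$, giving $\vv_p$. For $(\gbm_o)$, feeble compactness forces $\vvbmi(X)=\es$ and a regular chain automatically satisfies the upper-limit condition defining $\vv_o$, so Assertion \ref{abng1} applies with any dense family $(\ga_n)$.

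The main obstacle is part (6), specifically the semiregular variants of the classical network-based classes, where one must combine the network data with semiregularity to produce open-set families $\ga_n$ that faithfully implement the defining property step by step. Once the right $(\ga_n)$ is constructed, Assertion \ref{abng1} immediately supplies an $\al$-favourable strategy in $BM(X,\cV^*)$, which is precisely the defining property of a $\gbm_r$-space.
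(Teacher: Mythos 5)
Your proposal matches the paper's proof essentially step for step: (1)--(3) cite the same propositions, (4) and (5) use the same restriction/localization of strategies, and (6) runs through Assertion \ref{abng1} with the same choices of $\ga_n$ (for the semiregular network-based classes the paper makes your ``promotion'' of the network precise via the families $\Om(\cF_n)$ of open sets that either miss a network element or have closure inside its closure, which is exactly the device you are gesturing at). One small correction: feeble compactness does not force $\vvbmi(X)=\es$ (a decreasing chain of nonempty open sets can have empty intersection even in a compact space, e.g.\ $U_n=(0,1/n)$ in $[0,1]$); what the argument actually needs, and what you in effect use, is that for a \emph{regular} chain $\sqnn U\in\vvr(X)$ feeble compactness gives $\ltu_{\nom}M_n\neq\es$ for nonempty open $M_n\subset U_n$, while regularity of the chain places this upper limit inside $\bigcap_{\nom}U_n$.
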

%\end{proposition}
\begin{proof}
Item (1) follows from Proposition \ref{pbmb4}, item (2) follows from Proposition \ref{pbms4}, and item (3) follows from Proposition \ref{pbms5}.

Let us prove (4). Player $\al$ has a winning strategy on $X$.

Let us prove (5). After the first move, player $\al$ chooses $U_1\subset V_0$ such that $U_1\in\gbm_r$ and  then follows the winning strategy for $U_1$.

Let us prove (6).
Let $\tp$ be the topology of $X$ and $\tps=\tp\setminus\sset\es$.
For $\cF\subset \Expne X$ we denote
\[
\Om(\cF)=\set{U\in\tps:\text{ either }U\cap M=\es\text{ or }\cl U\subset \cl M\text{ for }M\in\cF}.
\]
If $\cF$ is locally finite, then $\cl{\bigcup \Om(\cF)}=X$.
For $t\in\sset{f,k,p,o}$ and $\cV=\vv_t(X)$ we construct $\sqnn\ga$ as in Assertion \ref{abng1}.
\begin{itemize}
\item[$\mathrm{(\gbm_f)}$]
Let $X$ be a developable space. Take a development $\sqnn\ga$  of the space $X$.

Let $X$ be a semiregular $\sigma$-space. Let $\sqnn\cF$ be a sequence of locally finite families such that $\bigcup_{\nom}\cF_n$ is a network. We set $\ga_n=\Om(\cF_n)$.
\item[$\mathrm{(\gbm_k)}$]
Let $X$ be a compact space. We put $\ga_n=\sset{X}$.

Let $X$ be a $p$-space. We set $\ga_n$  equal to the family $\cU_n$ from the definition of $p$-spaces (Definition 3.15, \cite{gru1984}).

Let $X$ be a strongly $\Sigma$-spaces. We put $\ga_n=\Om(\cF_n)$, where $\cF=\bigcup_n\cF_n$ is a $\sigma$-discrete family in Definition 4.13 of \cite{gru1984}.
\item[$\mathrm{(\gbm_p)}$]
Let $X$ be a countably compact space. We put $\ga_n=\sset{X}$.

Let $X$ be a $\Sigma$-space. We put $\ga_n=\Om(\cF_n)$, where $\cF=\bigcup_n\cF_n$ is a $\sigma$-discrete family in Definition 4.13 of \cite{gru1984}.

Let $X$ be a $w\D$-space. We set $\ga_n$ equal to the family $\cG_n$ in Definition 3.1 of \cite{gru1984}).
\item[$\mathrm{(\gbm_o)}$]
Let $X$ be a feebly compact space. We put $\ga_n=\sset{X}$.
\end{itemize}
\end{proof}

%------------------------------------------------------------------------

In \cite{arh2010} proved Theorem \ref{tbms1} $(\gbm_k)$ for $p$-spaces.

\begin{proposition}[\cite{rezn2008,rezn2022-2008}]\label{pbms9}
If $t\in\sset{o,p}$,
$X$ is a $\god_{t,k}$-space, and $Y$ is a $\god_{t,l}$-space, then $X\times Y$ is a $\god_{t,l}$-space.
\end{proposition}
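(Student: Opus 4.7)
The plan is to build, from winning strategies for player $\al$ in $OD_{t,k}^*(X)$ and $OD_{t,l}^*(Y)$, a winning strategy for $\al$ in $OD_{t,l}^*(X\times Y)$ that operates coordinate-wise. By Proposition \ref{pbms3}, to verify $X\times Y\in\god_{t,l}$ it is enough to provide such a strategy against a restricted $\be$ who plays $V_n$ from the product base $\cB=\pbase{\sset{X,Y}}$; in the case $t=o$ the same proposition further lets us take $\cB$ as the $\pi$-net, so that also $M_n\in\cB$, and in the case $t=p$ the singleton moves $M_n=\sset{(a_n,b_n)}$ split automatically. In either case the moves of $\be$ have the form $V_n=V_{X,n}\times V_{Y,n}$ and $M_n=M_{X,n}\times M_{Y,n}$.

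Let $s_X$ and $s_Y$ be the given winning strategies. Define $\al$'s strategy on $X\times Y$ by playing, at step $n$, the product $U_n=U_{X,n}\times U_{Y,n}$, where $U_{X,n}$ is the response of $s_X$ to the $X$-components of the history so far and $U_{Y,n}$ is the response of $s_Y$ to the $Y$-components. If the full play satisfies $\bigcap_\nom V_n=\es$, then the play lies in $\ww_e(X\times Y)$ and $\al$ wins. Otherwise both $\bigcap_\nom V_{X,n}\neq\es$ and $\bigcap_\nom V_{Y,n}\neq\es$, so neither projected play is in $\ww_e$, and the hypothesis that $s_X$ and $s_Y$ are winning gives $\sqn{V_{X,n},M_{X,n}}{n\in\om}\in\ww_k(X)$ and $\sqn{V_{Y,n},M_{Y,n}}{n\in\om}\in\ww_l(Y)$.

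It remains to find a point in $\ltu_\nom M_n\cap\bigcap_\nom V_n$. From $\ww_l$ on $Y$ pick $y\in\ltu_\nom M_{Y,n}\cap\bigcap_\nom V_{Y,n}$. The family $\set{\set{n\in\om: U\cap M_{Y,n}\neq\es}: U\text{ a neighbourhood of }y}$ is a filter base on $\om$ consisting of infinite sets, and so extends to some $p\in\oms$. From $\ww_k$ on $X$ fix a sequence $\sqnn x$ with $x_n\in M_{X,n}$ whose $p$-limit $x\eqdef\limp p x_n$ exists and lies in $\bigcap_\nom V_{X,n}$. Then $(x,y)\in\bigcap_\nom V_n$, and for any basic neighbourhood $U_X\times U_Y$ of $(x,y)$ both $\set{n\in\om: x_n\in U_X}$ and $\set{n\in\om: U_Y\cap M_{Y,n}\neq\es}$ lie in $p$, so their intersection is infinite and witnesses $(x,y)\in\ltu_\nom M_n$.

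The main obstacle is the asymmetry of the winning conditions: the $\ww_l$-witness on $Y$ is only a point in an upper limit, while promoting $(x,y)$ into an actual point of the upper limit on the product needs a sequence on the $X$-side that converges along the very ultrafilter dictated by the $Y$-witness. This is exactly what $\ww_k$ on $X$ supplies, and is why one factor must be $\god_{t,k}$ while the other need only be $\god_{t,l}$.
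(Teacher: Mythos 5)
Your proposal is correct and follows essentially the same route as the paper: reduce to coordinate-wise play on the product base, run the two strategies in parallel, take the $\ww_l$-witness $y$ on the $Y$-side, extend the filter of sets $N(U)$ for neighbourhoods of $y$ to an ultrafilter $p$, and use the $\ww_k$-sequence on the $X$-side to produce $x=\limp p x_n$ so that $(x,y)\in\ltu_\nom M_n\cap\bigcap_\nom V_n$. You are in fact slightly more careful than the paper in explicitly dispatching the $\ww_e$ case of the starred game and in justifying why the $M_n$ may be assumed to split as products for each of $t=o$ and $t=p$.
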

\begin{proof}
Open sets of the form $V\times U$, where $V\subset X$ and $U\subset Y$, form a base $\cB$ of the space $X\times Y$.
Let us define a winning strategy for $\al$. By Proposition \ref{pbms2}, it suffices to consider the case when players $\al$ and $\be$ choose open sets of the form $V\times U\in\cB$ and $\be$ chooses sets $M_n$ of the form $ M_n=M_{X,i}\times M_{Y,i}$, $M_{X,i}\in \pn_t(X)$ and $M_{Y,i}\in \pn_t(Y)$.
 On the $n$th move, we choose open nonempty $U_{X,n}\subset V_{X,n-1}$ and $U_{Y,n}\subset V_{Y,n-1}$ according to strategies on $X$ and $Y$, where $V_{n-1}=V_{X,n-1}\times V_{Y,n-1}$ and $M_{n-1}=M_{X,n-1}\times M_{Y,n-1}$ is the choice of $\be$ at the $(n-1)$th step. Let $U_n=U_{X,n}\times U_{Y,n}$. 
% Let us check the payoff of player $\al$.
Let us check that the player $\al$ won.

Let $\sqnn x\in \ll(\sqn{V_{X,n},M_{X,n}})$.
Let $y\in \ltu_{\nom} M_{Y,n} \cap \bigcap_{\nom} V_{Y,n}$. Let $N(U)=\set{n\in\om: M_{Y,n}\cap U\neq\es}$ for $U\subset Y$ and 
\[
\cF=\set{N(U) :U\text{ is a neighborhood of the point }x}. 
\]
The family $\cF$ is a filter on $\om$. Let $p\in\oms$ be some ultrafilter containing $\cF$. There is $x\in\bigcap_\nom V_{X,n}$ for which $x=\limp p\sqnn x$. Then $(x,y)\in \ltu_{\nom} M_n \cap \bigcap_{\nom} V_n$.
\end{proof}

\begin{proposition}[\cite{rezn2008,rezn2022-2008}]\label{pbms10}
Let $D$ be an index set  and let
$X_\de$ be a $\god_{o,k}$-space for $\de\in D$. Then $X=\prod_{\de\in D}X_\de$ is a $\god_{o,k}$-space.
\end{proposition}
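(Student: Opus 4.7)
The plan is to lift coordinate-wise winning strategies to the product. First, I would invoke Proposition \ref{pbms3}(3) to reduce the problem to exhibiting $\al$'s winning strategy in the modified game $OD(X,\cB,\cW^*;\uu,\uu)$, where $\cB = \pbase{\sq{X_\de}{\de\in D}}$, $\cW^* = \ww_k(X)\cup\ww_e(X)$ and $\uu = \uu_p(X,\cB)$; in this game all of $\al$'s and $\be$'s open-set moves and $\be$'s net-element choices $M_n$ come from the canonical base $\cB$. For each $\de\in D$, I fix by Proposition \ref{pbms2}(6) a winning strategy $s_\de$ for $\al$ in $OD_{o,k}(X_\de)$ with the property that whenever $\be$ plays $V_k=X_\de$ and $M_k=X_\de$ at some step $k$, $s_\de$ responds with $U_{k+1}=X_\de$.

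Next, I would define $\al$'s strategy on the product coordinate-wise: at step $n$, given $\be$'s choice of $V_n,M_n\in\cB$, I set $U_{n+1,\de}$ to be what $s_\de$ prescribes after seeing the $\de$-projection of the play so far. The crucial observation is that $\supp(U_{n+1}) \subset \supp(V_n)\cup \supp(M_n)$: for any $\de\notin\supp(V_n)\cup\supp(M_n)$ we have $V_{n,\de}=X_\de=M_{n,\de}$, so the property of $s_\de$ forces $U_{n+1,\de}=X_\de$. Hence $U_{n+1}\in\cB$, and on each coordinate $\de$ the projected play is a valid play of $OD_{o,k}(X_\de)$ in which $\al$ follows the winning strategy $s_\de$.

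To verify the outcome is a win for $\al$ on the product, consider the final play $\sqn{V_n,M_n}$. If $\bigcap_n V_n=\es$, then $\sqn{V_n,M_n}\in\ww_e(X)$ and $\al$ wins. Otherwise $\bigcap_n V_n = \prod_\de \bigcap_n V_{n,\de}\neq\es$, so $\bigcap_n V_{n,\de}\neq\es$ for every $\de$; since $\al$ wins the $\de$-projected game, $\sqn{V_{n,\de},M_{n,\de}}\in\ww_k(X_\de)$. Thus for each $\de$ there is a sequence $(x^\de_n)_\nom$ with $x^\de_n\in M_{n,\de}$ such that for every $p\in\oms$, the $p$-limit $y_\de=\limp p x^\de_n$ exists in $\bigcap_n V_{n,\de}$. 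Setting $x_n=(x^\de_n)_{\de\in D}\in M_n$ and $y=(y_\de)_{\de\in D}\in\bigcap_n V_n$, the fact that $p$-limits in product spaces are computed coordinate-wise (verified on basic open neighborhoods of $y$ using that a finite intersection of sets in $p$ lies in $p$) gives $y=\limp p x_n$, so $\sqn{V_n,M_n}\in\ww_k(X)$ and $\al$ wins.

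The only real subtlety is the support bookkeeping that keeps $U_{n+1}$ basic open, and that is handled by the trivial-move condition of Proposition \ref{pbms2}(6); the combinatorial heart of the argument is the standard fact that $p$-limits in product spaces act coordinate-wise, so no uniformity in $p$ across coordinates is needed.
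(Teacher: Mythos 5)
Your proof is correct and follows essentially the same route as the paper's: reduce to basic open moves via Proposition \ref{pbms3}(3), then lift the coordinate strategies chosen to satisfy the trivial-move condition of Proposition \ref{pbms2}(6), which keeps the supports finite so that $\al$'s responses stay in $\pbase{\sq{X_\de}{\de\in D}}$. Your explicit verification of the payoff via coordinate-wise $p$-limits fills in a step the paper leaves implicit, but the underlying argument is the same.
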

\begin{proof}
By virtue of proposition \ref{pbms3} (3), it suffices to consider the case when players $\al$ and $\be$ choose sets $U_n,V_n,M_n$ from $\cB=\pbase{\sq{X_\de }{\de\in D}}$.

Let us define a winning strategy for player $\al$. Let $s_\de$ be a winning strategy for $\al$ on $X_\de$ satisfying condition (6) in Proposition \ref{pbms2}. Suppose that $n-1$ moves are made and  sets $U_k,V_k,M_k\in\cB$, $U_k=\prod_{\de\in D}U_{\de,k}$, $V_k=\prod_{ \de\in D}V_{\de,k}$, $M_k=\prod_{\de\in D}M_{\de,k}$ for $k<n$  are chosen. We put
\[
U_{\de,n} = s_\de(U_{\de,0},V_{\de,0},M_{\de,0},\dots ,U_{\de,n-1}, V_{\de,n-1},M_{\de,n-1})
\]
for $\de\in D$ and $U_{n} = \prod_{\de\in D} U_{\de,n}$. Since $U_{\de,k}=V_{\de,k}=M_{\de,k}=X_\de$ for almost all $\de$, we have $U_n\in\cB$.
\end{proof}

\begin{proposition}\label{pbms11}
Let $X_n$ be a $\god_{p,k}$-space for $\nom$. Then $X=\prod_{\nom}X_n$ is a $\god_{p,k}$-space.
\end{proposition}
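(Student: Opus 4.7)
The plan is to mirror the proof of Proposition~\ref{pbms10}, but since $\pn_p(X)$ consists of singletons and so $M_j=X$ never holds, I cannot invoke condition~(6) of Proposition~\ref{pbms2}. Instead, the right tool is condition~(5), which provides, for any prescribed $n\in\om$, a winning strategy on $X_n$ that ``waits through'' $n$ trivial $V$-moves. For each coordinate $n$, I would pick a winning strategy $s_n$ for $\al$ on $X_n$ satisfying the waiting condition of Proposition~\ref{pbms2}(5) with threshold $n+1$: if $\be$ chooses $V_l=X_n$ at some step $l<n+1$, then $\al$ responds with $U_{l+1}=X_n$. The fact that this threshold grows with $n$ is exactly what keeps the product strategy in the finite-support base.

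By Proposition~\ref{pbms3}(2), it suffices to play the game with both players drawing their open sets from the canonical base $\cB=\pbase{\sq{X_n}{n\in\om}}$, while $\be$ picks singletons $M_j=\{y_j\}$. Given a history $(V_i,M_i)_{i\leq j-1}$ with $V_i=\prod_n V_i^{(n)}\in\cB$ and $y_i=(y_i^{(n)})_n\in V_i$, I define $\al$'s response coordinate by coordinate,
\[
U_j^{(n)}\eqdef s_n\bigl(V_0^{(n)},\{y_0^{(n)}\},\ldots,V_{j-1}^{(n)},\{y_{j-1}^{(n)}\}\bigr),\qquad U_j\eqdef\prod_n U_j^{(n)}.
\]
If $n\geq j-1$ and $V_i^{(n)}=X_n$ for all $i<j$ (so $\be$ has not yet activated coordinate $n$), then the waiting property of $s_n$ with threshold $n+1\geq j$ forces $U_j^{(n)}=X_n$. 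Consequently $\supp U_j\subset\{0,1,\ldots,j-2\}\cup\bigcup_{i<j}\supp V_i$, which is finite, so $U_j\in\cB$; legality $U_j\subset V_{j-1}$ is coordinate-wise from the legality of each $s_n$.

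To check that $\al$ wins the resulting play $\sqn{V_j,M_j}$, I observe that for every $n$ the projection $(V_j^{(n)},\{y_j^{(n)}\})_j$ is a legal $\be$-play against $s_n$, so by $s_n$ being winning it lies in $\ww_k(X_n)\cup\ww_e(X_n)$. If some $\bigcap_j V_j^{(n)}=\es$, then $\bigcap_j V_j=\prod_n\bigcap_j V_j^{(n)}=\es$ and $\sqn{V_j,M_j}\in\ww_e(X)$. Otherwise, for each $n$ and each $p\in\oms$ I obtain $x^{(n)}=\limp p y_j^{(n)}\in\bigcap_j V_j^{(n)}$; since $p$-limits in the product topology are computed coordinate-wise, $x=(x^{(n)})_n=\limp p y_j\in\bigcap_j V_j$, placing $\sqn{V_j,M_j}$ in $\ww_k(X)$.

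The one delicate point is guaranteeing finite support. Because the waiting threshold for coordinate $n$ is only $n+1$, the strategy $s_n$ may force $U_j^{(n)}\neq X_n$ at step $j=n+2$ even when $\be$ has kept that coordinate trivial; but only coordinates with $n\leq j-2$ can be so ``activated by $\al$'' at step $j$, and these are finite. Everything else---the case split on whether the intersection is empty, plus the product formula for $p$-limits---is routine.
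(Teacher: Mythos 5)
Your proof is correct and follows essentially the same route as the paper: reduce to the canonical finite-support base via Proposition~\ref{pbms3}, take for each coordinate a winning strategy with the ``waiting'' property of Proposition~\ref{pbms2}(5) (threshold growing with the coordinate index), and play coordinatewise. You additionally spell out the finite-support bookkeeping and the coordinatewise $p$-limit verification of the payoff, which the paper's terser proof leaves implicit.
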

\begin{proof}
Let us describe a winning strategy for $\al$.
Let $s_n$ be a winning strategy for $\al$ on $X_n$ satisfying condition (5) of \ref{pbms2}. We denote $\cB=\pbase{\sqnn X}$.
It follows from Proposition \ref{pbms3} that it suffices to consider the case when the players $\al$ and $\be$ choose the sets $U_k,V_k\in\cB$. Suppose that $U_j,V_j\in\cB$, $x_j\in X$, $U_j=\prod_\nom U_{j,n}$, $V_j=\prod_\nom V_{j,n}$, and  $x_j=\sqn{x_{j,n}}$ for $j<k$. Let $U_{n,k}=s_n(U_{n,0},V_{n,0},x_{n,0},\dots ,V_{n,k-1},x_{n,k -1})$ for $\nom$ and $U_k=\prod_\nom U_{n,k}$.
\end{proof}

%------------------------------------------------------------------------

\begin{theorem}\label{tbms2}
\begin{itemize}
\item[\rm(1)]
In the diagram below, each arrow $A \to B$
means that $A\subset B$.
{
\def\b#1{\gbm_{#1}}
\def\o#1#2{\god_{#1,#2}}
\def\a#1{\arrow[#1]}
\[
\begin{tikzcd}
\b o \a{ddr} &&\b k \a{dr}& \b p \a{ddl}
\\
\o ok \a{dr} &&& \o pk \a{lll}\a{dl}
\\
&\o ol & \o pl \a l&
\end{tikzcd}
\]}

\item[\rm(2)]
%pbms9
Let $t\in\sset{o,p}$,
$X$ be a $\god_{t,k}$-space and $Y$ be a $\god_{t,l}$-space. Then $X\times Y$ is a $\god_{t,l}$-space.
\item[\rm(3)]
%pbms10
Let $D$ be an index set, and 
$X_\de$ be a $\god_{o,k}$-space for $\de\in D$. Then $\prod_{\de\in D}X_\de$ is a $\god_{o,k}$-space.
\item[\rm(4)]
%pbms11
Let $X_n$ be a  $\god_{p,k}$-space for $\nom$. Then $\prod_{\nom}X_n$ is a $\god_{p,k}$-space.
\item[\rm(5)]
For $t\in\sset{o,p}$ and $q\in\sset{l,k}$,
if $X\in\god_{t,q}$ and $U\subset X$ is an open subspace, then $U\in\god_{t,q}$.
\item[\rm(6)]
For $t\in\sset{o,p}$ and $q\in\sset{l,k}$,
if  $X$ is locally  $\god_{t,q}$ (i.e.,\ any point has a neighborhood $U\in\god_{t,q}$), then $X\in\god_{t, q}$.
\item[\rm(7)]
If $X$ is a quasi-regular space and belongs to one of the classes  $(\god_{t,q})$ for $t\in\sset{o,p}$ and $q\in\sset{l,k }$  listed below, then $X$ is a $\god_{t,q}$-space:
\begin{itemize}
\item[$\mathrm{(\god_{p,k})}$] metrizable spaces, Moore spaces, developable spaces, semiregular $\sigma$-spaces and semiregular spaces with a countable network,
 compact spaces, $p$-spaces, semiregular strongly $\Sigma$-spaces;
\item[$\mathrm{(\god_{p,l})}$] countably compact spaces, semiregular $\Sigma$-spaces, $w\D$-spaces;
\item[$\mathrm{(\god_{o,l})}$] feebly compact spaces.
\end{itemize}
\end{itemize}
\end{theorem}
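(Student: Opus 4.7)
The plan is to assemble the theorem from the preparatory material already in place rather than redoing work. Items (2), (3), and (4) are nothing more than restatements of Propositions~\ref{pbms9}, \ref{pbms10}, and \ref{pbms11} respectively, so they require no additional argument. Item (1) is likewise immediate: every arrow in the diagram of Theorem~\ref{tbms2}(1) already occurs, in its $\ggg$-space form, among the arrows in the diagram of Proposition~\ref{pbmb4}; applying part~(3) of that proposition yields the inclusions asserted here. Concretely, the arrows $\gbm_o\to\god_{o,l}$, $\gbm_k\to\god_{p,k}$, $\gbm_p\to\god_{p,l}$, $\god_{o,k}\to\god_{o,l}$, $\god_{p,k}\to\god_{o,k}$, $\god_{p,k}\to\god_{p,l}$, and $\god_{p,l}\to\god_{o,l}$ are each visible in Proposition~\ref{pbmb4}.

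For item (5), I would mimic the proof of Theorem~\ref{tbms1}(4). Given a winning strategy $s$ of $\al$ in $OD_{t,q}(X)$, I define a strategy $s'$ on the open subspace $U$ by having $\al$ imagine the play taking place in $X$: at the first move play $U_0' = U$, and from then on apply $s$ pretending that $\be$'s initial choice was $V_0 = U$. All subsequent open sets produced by $s$ lie inside $U$, and for $t\in\{o,p\}$ one has $\pn_t(U)\subset \pn_t(X)$ (open subsets of $U$ are open in $X$, and singletons of $U$ are singletons of $X$), so every legal $\be$-move in $U$ is a legal $\be$-move in $X$. Any run won by $\al$ in $X$ is therefore won by $\al$ in $U$.

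Item (6) follows the template of Theorem~\ref{tbms1}(5): after $\be$'s first move $V_0$, player $\al$ picks a point $x\in V_0$ together with a neighborhood $W\subset V_0$ with $W\in\god_{t,q}$, sets $U_1 = W$, and then from step~$2$ onward follows a winning strategy on $W$ (invoking item~(5) to pass to the open subset of $W$ chosen by $\be$ as necessary).

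Finally, for item~(7), the classes listed split into three groups that match precisely the classes from Theorem~\ref{tbms1}(6): $(\god_{p,k})$ is the union of $(\gbm_f)$ and $(\gbm_k)$, $(\god_{p,l})$ coincides with $(\gbm_p)$, and $(\god_{o,l})$ coincides with $(\gbm_o)$. Theorem~\ref{tbms1}(6) shows that each space listed is a $\gbm_r$-space for the appropriate $r$, and the inclusions $\gbm_f,\gbm_k\subset\god_{p,k}$, $\gbm_p\subset\god_{p,l}$, and $\gbm_o\subset\god_{o,l}$ from item~(1) finish the argument. No step is genuinely hard; the only mildly delicate point is the compatibility of $\pi$-nets in item~(5), which is settled by the trivial inclusion $\pn_t(U)\subset\pn_t(X)$.
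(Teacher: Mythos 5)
Your proposal follows essentially the same route as the paper's own proof: items (1)--(4) are read off from Proposition \ref{pbmb4} and Propositions \ref{pbms9}--\ref{pbms11}, items (5) and (6) are obtained by transplanting a winning strategy exactly as in the proofs of Theorem \ref{tbms1}(4)--(5), and item (7) is reduced to Theorem \ref{tbms1}(6) via the inclusions $\gbm_f\subset\gbm_k\subset\god_{p,k}$, $\gbm_p\subset\god_{p,l}$, $\gbm_o\subset\god_{o,l}$. The argument is correct and, if anything, slightly more detailed than the paper's one-line justifications.
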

%\end{proposition}
\begin{proof}
Item (1) follows from Proposition \ref{pbmb4}, item (2) follows from Proposition \ref{pbms9}, item (3) follows from Proposition \ref{pbms10} and item (4) follows from Proposition \ref{pbms11}.

Let us prove (5). Player $\alpha$ follows a winning strategy for $X$.

Let us prove (6). After the first move, player $\al$ chooses $U_1\subset V_0$ such that $U_1\in\gbm_r$, then follows the winning strategy for $U_1$.

Item (7) follows from Theorem \ref{tbms1}.
\end{proof}

%%% end file('../en/in/bms.tex') %%%

%\input in/gtt.tex
%%% begin file('../en/in/bme.tex') %%%
\section{Modifications of the Banach--Mazur game with four players}\label{sec-bme}

To formulate and prove the results of this section, it is necessary to define the game and related concepts precisely.

\subsection{General definition of a game.}
The game $\gi$ is defined by the following components:
\begin{itemize}
\item[(P)] $P$, a set of \term{players}\et(players);
\item[($\Ss$)] $\Ss=\iset{S_\al: \alp}$,  an indexed  family of \term{strategies} of players, in which  each player $\al$ has a nonempty set of strategies $S_\al$. A set
\[
\pwr\Ss_P=\prod\Ss
\]
is the \term{strategy space} in the game;
\item[(R)] $R$, a set of \term{plays}\et(plays), a record of the players' moves after they implement their strategies;
\item[($\pi$)] $\pi:\pwr\Ss_P\to R$, the \term{outcome function}, implementation of player strategies during the game, forming a play in the set of plays $R$;
\item[$\mathrm{(\cO)}$] $\cO=\iset{O_\al:\alp}$, the family of \term{outcomes of the game}: $O_\al$ determines the payoff for player $\al$;
\item[($\nu$)] $\nu: R\to \pwr\cO_P$, the \term{payoff function}, which determines the game outcome: $\nu=\diag_{\alp} \nu_\al$, where $\nu_\al=\pi_\al\circ \nu$.
\end{itemize}

The game goes as follows:
\begin{itemize}
\item each player $\alp$ chooses a strategy $s_\al\in S_\al$;
\item players play the game according to their chosen strategies and obtain a play $r=\pi(s)\in R$, where $s=(s_\al)_{\alp}\in \pwr\Ss_P$;
\item the payoff function $\nu$ determines the result of the play $r$: $(v_\al)_{\alp}=\nu(r)\in \pwr\cO_P$, where  $v_\al=\nu_\al(r)$ is the  payoff for player $\al$.
\end{itemize}

We consider games with 
$\cO=\iset{\B_\al:\alp}$, where $\B_\al=\B=\sset{0,1}$, i.e., when $\nu_\al$ is a Boolean function,
$0$ is treated as {\bf false}\et(false) and $1$ as {\bf true}\et(true). The result of a game $\nu_\al(r)$ is interpreted as the payoff of player $\al$: $\al$ wins if $\nu_\al(r)=1$ and $\al$ loses if $\nu_\al (r)=0$. Such games will be called \term{games with a Boolean payoff function}.

A game with a Boolean payoff function is called a \term{zero-sum game} if for any play $r\in R$ there exists a unique player $\alp$ for which $\nu_\al(r)$ equals $1$.
For games with two players, a zero-sum game is a game in which  the first player's gain is the second player's loss and the first player's loss is the second player's gain, i.e., ${\nu_\be}=\lnot \nu_\al$ if $P =\sset{\al,\be}$.

The player $\al$ is called \term{nature} if $\nu_\al$ is identically equal to zero.

A \term{coalition} $K\subset P$ is any set of players. The set $K^c=P\setminus K$ is  the \term{opposite coalition}. A set
\[
\pwr\Ss_K\eqdef\prod_{\al\in K}S_\al
\]
is called the \term{set of coalition strategies} of $K$ and $s=(s_\al)_{\al\in K}\in \pwr\Ss_K$, a \term{coalition strategy} of $K$.

If a game has a Boolean payoff function, then we denote
\[
\nu_K \eqdef \OR_{\al\in K}\nu_\al.
\]
For $r\in R$, $\nu_K(r)=1$ if and only if $\nu_\al(r)=1$ for some $\al\in K$.

A coalition strategy $s\in\pwr\Ss_K$ for a coalition $K$ is called \term{$K$-winning} if $\pi_K(\pi(s\fun t))=1$ for all $t\in \pwr\Ss_{K^c}$. A game is called \term{$K$-favorable} if the coalition $K$ has a $K$-winning strategy. A game is \term{$K$-unfavorable} if there is no $K$-winning strategy.

%----------------------------------------------------------------------

The following assertion is  checked  directly.
\begin{assertion} \label{abme1} Let $K\subset T\subset P$. If the game $\gi$ is $K$-favorable, then $\gi$ is $T$-favorable. If the game $\gi$ is $T$-unfavorable, then $\gi$ is $K$-unfavorable.
\end{assertion}

For $\alp$, the strategy $s_\al\in S_\al$ is called \term{$\al$-winning} if the strategy $\{\al\to s_\al\}$ of the coalition $\{\al\}$ is $\{\al\}$-winning.
The game is \term{$\al$-favorable} if it is $\sset\al$-favorable, and the game is \term{$\al$-unfavorable} if it is $\sset\al$-unfavorable.

%If every player in the coalition $K$ is a nature (i.e.\ $\nu_K\equiv 0$),
Let $Q=K^c=P\setminus K$, let $s\in\pwr\Ss_K$ be a strategy of $K$, and let $\al\in Q$ be a coalition. We define  games $\gi'=\gi[s]$ and $\gi''=\gi[s,\al]$. The components of the games $\gi'$ and $\gi''$ are the same, the difference is in the payoff function for the player $\al$.
The game $\gi'$ is defined by the following components:
\begin{itemize}
\item[(P)] $Q$, a  set of players;
\item[($\Ss$)] $\iset{S_\al: \al\in Q}$, a family of strategies;
\item[(R)] $R$, a set of games like in the game $\gi$;
\item[($\pi$)] $\pi':\pwr\Ss_Q\to R$, $\pi'(q)=\pi(q\fun s)$ for $q\in \pwr\Ss_Q $, the outcome function;
\item[$\mathrm{(\cO)}$] Boolean game;
\item[($\nu$)] $\nu'',\nu': R\to \pwr\cO_Q$, the payoff functions: $\nu''_\de=\nu'_\de= \nu_\de$ if $\de\neq\al$, $\nu'_\al=\nu_\al$ and $\nu''_\al=\nu_{K\cup \sset \al}= \nu_\al\lor \nu_{K}$.
\end{itemize}

We call a coalition $K$ \term{nature} if each player in the coalition is nature, that is, $\nu_K\equiv 0$.
A coalition $K$ is called \term{dummy} if $K$ is nature and for any coalition $L\subset Q$ it is $L$-favorable if and only if the game is $L\cup K$-favorable.

The following proposition follows from the definitions.

\begin{proposition} \label{pbme1-1} Let $\gi$ be a game with a Boolean payoff function, $P$ be the set of players in the game $\gi$, $K\subset P$ be a coalition, $Q=K\setminus K$ be the opposite coalition, and $s\in\pwr\Ss_K$.
Then the following assertions hold.
\begin{itemize}
\item[(1)]
The game $\gi[s,\al]$ is a zero-sum game for all $\al\in Q$.
\item[(2)]
The game $\gi[s]$ is a zero-sum game if and only if the coalition $K$ is nature. In this case the game $\gi[s]$ is the same as $\gi[s,\al]$ for all $\al\in Q$.
\item[(3)] Let $K$ be nature. A coalition $K$ is a dummy coalition if and only if $\gi[s]\sim \gi[s']$ for any $s'\in \pwr\Ss_K$.
\end{itemize}
\end{proposition}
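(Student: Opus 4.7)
The plan is to verify (1), (2) and (3) by directly unwinding the Boolean payoff functions of $\gi[s]$ and $\gi[s,\al]$ and comparing them with those of the original game $\gi$, using that the games under consideration are zero-sum on $P$ (so that exactly one player of $P$ wins in each play).

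For (1) I would fix an arbitrary play $r = \pi(q \fun s)$ with $q \in \pwr\Ss_Q$. By construction the players of $\gi[s,\al]$ are $Q$, with payoffs $\nu''_\al = \nu_\al \lor \nu_K$ and $\nu''_\de = \nu_\de$ for $\de \in Q \setminus \sset{\al}$. Zero-sumness of $\gi$ on $P$ gives a unique $P$-winner in $r$. If that winner lies in $K \cup \sset{\al}$, then $\nu''_\al(r) = 1$ while every other $\nu''_\de(r) = 0$; if instead the winner is some $\de^* \in Q \setminus \sset{\al}$, then $\nu''_{\de^*}(r) = 1$ and the rest vanish. Either way exactly one player of $Q$ wins in $r$, which is zero-sumness of $\gi[s,\al]$.

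For (2), if $K$ is nature then $\nu_K \equiv 0$, so $\nu''_\al = \nu_\al \lor \nu_K = \nu_\al = \nu'_\al$, and $\gi[s]$ coincides with $\gi[s,\al]$ as a Boolean game; (1) then yields zero-sum. Conversely, if $\gi[s]$ is zero-sum, then in each play $\pi(q \fun s)$ the unique $P$-winner lies in $Q$, so no player of $K$ wins in any such play; ranging $s$ over $\pwr\Ss_K$ exhausts all plays of $\gi$ and forces $\nu_K \equiv 0$.

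For (3), assume $K$ is nature so that $\nu_{L \cup K} = \nu_L$ for every $L \subset Q$. Then $L \cup K$-favourability of $\gi$ amounts to the existence of some pair $(\sigma, s^*) \in \pwr\Ss_L \times \pwr\Ss_K$ making $\gi[s^*]$ be $L$-favourable via $\sigma$, while $L$-favourability of $\gi$ amounts to the existence of a uniform $\sigma^*$ that is $L$-winning in $\gi[s]$ for every $s$ simultaneously. The dummy property identifies these: per-$s$ $L$-favourability for some $s$ becomes uniform $L$-favourability for all $s$. Running over all $L \subset Q$, this is exactly the assertion $\gi[s] \sim \gi[s']$ for all $s, s' \in \pwr\Ss_K$. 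The main obstacle will be this converse direction: given only the equivalence of all the games $\gi[s]$ one must extract a single winning $\sigma^*$ working against every $s$, and this uniformity has to be read off from the fact that $L$-favourability in $\gi[s]$ is being stipulated independent of $s$ for every $L$ at once.
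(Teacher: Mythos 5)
The paper offers no argument for this proposition beyond the phrase ``it follows from the definitions,'' so there is no proof to compare yours against step by step. Your treatment of (1) and (2) is sound: you correctly supply the hypothesis, implicit in the paper, that $\gi$ itself is zero-sum (without it (1) is false), and the converse of (2) does need either your device of ranging over all $s\in\pwr\Ss_K$ or the literal reading that ``zero-sum'' quantifies over every $r\in R$, $R$ being the same set of plays for $\gi[s]$ as for $\gi$.

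The genuine gap is the converse implication in (3); you have located it honestly but not closed it, and it cannot be closed at the level of generality of this abstract framework. Unwinding the definitions (with $K$ nature, so $\nu_{L\cup K}=\nu_L$): $\gi$ is $L\cup K$-favourable iff \emph{some} $\gi[s^*]$ is $L$-favourable, whereas $\gi$ is $L$-favourable iff a \emph{single} $\sigma\in\pwr\Ss_L$ is $L$-winning in $\gi[s']$ for \emph{every} $s'$ simultaneously. The hypothesis $\gi[s]\sim\gi[s']$ only upgrades ``some $\gi[s^*]$ is $L$-favourable'' to ``for every $s'$ there is a winning $\sigma_{s'}$''; it provides no uniformization, so your phrase ``this is exactly the assertion'' overstates the forward identification and the converse actually fails. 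Concretely: take $P=\sset{\al,\be,\ga}$, $K=\sset{\ga}$ nature, $S_\al=S_\ga=\sset{0,1}$, $S_\be$ a singleton, and let $\al$ win precisely when his choice equals $\ga$'s (and $\be$ win otherwise). Every $\gi[s']$ is $\al$-favourable and not $\be$-favourable, so all these games are equivalent; yet $\gi$ is $\sset{\al,\ga}$-favourable but not $\sset{\al}$-favourable, so $K$ is not dummy. Thus (3) should be read as (or weakened to) the forward implication ``$K$ dummy implies $\gi[s]\sim\gi[s']$ for all $s,s'$,'' which your argument does establish correctly and which is the only direction the paper uses (Theorem \ref{tbme1+1} together with this implication and Assertions \ref{abme8}, \ref{abme9} yields Proposition \ref{pbme2}).
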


%----------------------------------------------------------------------

\subsection{Definition of the game $\ode(X,\pn,\cW;\Om)$}
Let $(X,\tp)$ be a space, and let  $\tps=\tp\setminus\sset\es$.
Suppose that $\pn$ is a $\pi$-net of $X$, $\cW\subset \ww(X)$ and $\Om\subset \tps$.
\myparagraph{Game parameters:} $X,\pn,\cW$ and $\Om$.
\myparagraph{Game set of players:} $P=\sset{\al,\ga,\be,\de}$  (four-players game).
\myparagraph{The $n$th move:}
On the $n$th move, players choose sets
\[
U_n,G_n,\cG_n,V_n,M_n,D_n,\cD_n;
\]
in details:
\begin{center}
\begin{tabular}{ c | l | l}
player & selection & \\ \hline
$\al$ & $U_n$ & $U_n\in \tps$ \\
$\ga$ & $G_n, \cG_n$ & $G_n\in \tps$, $\cG_n\subset\tps$ \\
$\be$ & $V_n,M_n$ & $V_n\in \tps$, $M_n\in \pn$ \\
$\de$ & $D_n, \cD_n$ & $D_n\in \tps$, $\cD_n\subset\tps$ \\
\end{tabular}
\end{center}

For $U\in\tps$ we denote
\begin{align*}
\ppi U=\set{ (V,\cP)\in \tps \times \Expne{\tps}: \cP\text{ is a $\pi$-base } V}.
\end{align*}

On the first move, for $n=0$, the choice of players is
\begin{center}
\begin{tabular}{ c | l | l}
player & choice & choice definition \\ \hline
$\al$ & $U_0$ & $U_0\in \Om$ \\
$\ga$ & $G_0, \cG_0$ & $G_0=U_0$ and $(G_0,\cG_0)\in \ppi{U_0}$ \\
$\be$ & $V_0,M_0$ & $V_0\in\cG_0$, $M_0\in\pn$ and $M_0\subset U_0$ \\
$\de$ & $D_0, \cD_0$ & $(D_0,\cD_0)\in \ppi{V_0}$ \\
\end{tabular}
\end{center}

On the $n$th move, for $n>0$, we determine the choice of players is
\begin{center}
\begin{tabular}{ c | l | l}
player & choice & choice definition \\ \hline
$\al$ & $U_n$ & $U_n\in \cD_{n-1}$ \\
$\ga$ & $G_n, \cG_n$ & $(G_n,\cG_n)\in \ppi{U_n}$ \\
$\be$ & $V_n,M_n$ & $V_n\in\cG_n$, $M_n\in\pn$ and $M_n\subset U_n$ \\
$\de$ & $D_n, \cD_n$ & $(D_n,\cD_n)\in \ppi{V_n}$ \\
\end{tabular}
\end{center}
Note that for $\nom$
\[
U_0=G_0\supset\dots  \supset U_n \supset G_n \supset V_n \supset D_n \supset U_{n+1} \supset\dots 
\ \ \text{ and } \ \
M_n\subset U_n.
\]

\myparagraph{The conditions for players to win:}
Player $\al$ wins if $\sqn{U_n,M_n}\in \cW$. Player $\be$ wins if $\sqn{U_n,M_n}\notin \cW$. Players $\ga$ and $\de$ are nature, which means that they always lose.

%------------------------------------------------- ---------------------

\subsection{Definition of the game $\bme(X;\Om)$}
Let $(X,\tp)$ be the space, $\tps=\tp\setminus\sset\es$ and $\Om\subset \tps$.
\myparagraph{Game parameters:} $X$ and $\Om$.
\myparagraph{The set of players in the game:} $P=\sset{\al,\be}$ (two-player game).
\myparagraph{The $n$th move:}
On the $n$th move, players choose sets
\[
U_n,V_n\in\tps.
\]

On the first move, for $n=0$, player $\al$ chooses $U_0\in\Om$, and player $\be$ chooses $V_0\in\tps$, $V_0\subset U_0$.
On the $n$th move, for $n>0$, player $\al$ chooses $U_n\in\tps$, $U_n\subset V_{n-1}$, and player $\be$ chooses $V_n\in \tps$, $V_n\subset U_n$.

\myparagraph{The conditions for players to win:}
Player $\al$ wins if $\bigcap_\nom V_n\neq\es$, otherwise $\be$ wins.

\subsection{Relationship between the $MB$, $BM$ and $\bme$ games }
We denote $\Om_{BM}=\sset X$, and $\Om_{MB}=\tps$.
From the construction  we see that the following assertion holds.
\begin{assertion}\label{abme3}
\begin{align*}
\bme(X;\Om_{BM})&\sim BM(X),
\\
\bme(X;\Om_{MB})&\sim MB(X).
\end{align*}
\end{assertion}

The Banach--Oxtoby Theorem \ref{tbmb-bm} implies  the following proposition.

\begin{proposition} \label{pbme1}
Let $X$ be a space,  and let $\Om\subset \tps$. The game $\bme(X;\Om)$ is $\be$-unfavorable if and only if $U$ is Baire for some $U\in \Om$.
\end{proposition}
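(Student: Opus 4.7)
\textbf{Proof plan for Proposition \ref{pbme1}.}

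The plan is to reduce everything to the Banach--Oxtoby theorem (Theorem \ref{tbmb-bm}(1)) by observing that $\bme(X;\Om)$ essentially splits into a family of Banach--Mazur games indexed by player $\al$'s first move. Concretely: once $\al$ selects $U_0\in\Om$, every subsequent choice $U_n,V_n$ is forced to lie in $\tps$ below $U_0$, so it is an open set of the subspace $U_0$; and the winning condition $\bigcap_{\nom}V_n\neq\es$ is exactly that of $BM(U_0)$. Thus the tail of $\bme(X;\Om)$ after the move $U_0$ is literally the game $BM(U_0)$.

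For the implication ($\Leftarrow$), suppose some $U\in\Om$ is Baire. I argue by contradiction: assume $\be$ has a winning strategy $\fs$ in $\bme(X;\Om)$. Define a strategy $\fs_U$ for $\be$ in $BM(U)$ by feeding $\fs$ the phantom first move $U_0=U$ and then copying $\fs$'s responses. Since $\fs$ is winning, the resulting play satisfies $\bigcap_{\nom}V_n=\es$, so $\fs_U$ is $\be$-winning in $BM(U)$. By Theorem \ref{tbmb-bm}(1) this contradicts $U$ being Baire.

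For the implication ($\Rightarrow$), I prove the contrapositive: assume no $U\in\Om$ is Baire. By Theorem \ref{tbmb-bm}(1), for each $U\in\Om$ there is a $\be$-winning strategy $\fs_U$ in $BM(U)$; fix such a choice (using the axiom of choice over $\Om$). Define $\fs$ for $\be$ in $\bme(X;\Om)$ as follows: when $\al$ opens with $U_0\in\Om$, play the rest of the game according to $\fs_{U_0}$. Because each $\fs_{U_0}$ defeats every continuation by $\al$ in $BM(U_0)$, and the continuations of $\bme(X;\Om)$ after $U_0$ coincide with those of $BM(U_0)$, the strategy $\fs$ is $\be$-winning in $\bme(X;\Om)$. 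Hence $\bme(X;\Om)$ is $\be$-favorable.

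There is no real obstacle here; the only thing to be mindful of is the bookkeeping that translates strategies between $\bme(X;\Om)$ and $BM(U_0)$, which amounts to verifying that the legal-move constraints match on the nose once the first move is fixed. The trivial edge case $\Om=\es$ is consistent: $\al$ has no legal first move, so $\be$ is trivially $\be$-favorable, and vacuously no $U\in\Om$ is Baire, so both sides of the equivalence are false.
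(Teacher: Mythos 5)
Your proof is correct and follows exactly the route the paper intends: the paper states Proposition \ref{pbme1} as an immediate consequence of the Banach--Oxtoby theorem \ref{tbmb-bm} without writing out details, and your argument simply makes explicit the reduction (the tail of $\bme(X;\Om)$ after the first move $U_0$ is the game $BM(U_0)$, plus the choice of $\be$-winning strategies over $\Om$ for the contrapositive direction).
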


%----------------------------------------------------------------------

\subsection{Properties of the game $\ode(X,\pn,\cW;\Om)$}

We fix a mapping $\lll: \tps\times \Expne{\tps}\to\tp$ for which the following condition is satisfied: if $(U,\cP)\in \tp\times \Expne{\tps}$ and $ V=\lll(U,\cP)$, then
\begin{itemize}
\item
$V=U$ if $U\in\cP$;
\item
$V\in \cP'=\set{W\in\cP: W\subset U}$ if $\cP'\neq\es$;
\item
$V=\es$ otherwise.
\end{itemize}

\myparagraph{Notation of game components of the game $\ode(X,\pn,\cW;\Om)$:}
For $\ka\in P$, we denote by $S_\ka$ the strategy of player $\ka$,  and put $\Ss=\iset{S_\ka:\ka\in P}$. We denote by $\pi$ the outcome function, $\pi = \pwr \Ss_P\to R$, where $R$ is the set of plays.

\begin{assertion}\label{abme4}
Let $(s_\al,s_\ga,s_\de)\in S_\al\times S_\ga\times S_\de$. There is a strategy $q_\al\in S_\al$ such that for any $(q_\ga,q_\be,q_\de)\in S_\ga\times S_\be \times S_\de$ there exists $s_\be\in S_\be$ so for
$s=\iset{s_\ka:\ka\in P}$, $q=\iset{q_\ka:\ka\in P}$,
\begin{align*}
\sqn{\wt U_n,\wt G_n,\wt \cG_n,\wt V_n,\wt M_n,\wt D_n,\wt \cD_n}&=\pi(s),
\\
\sqn{U_n,G_n,\cG_n,V_n,M_n,D_n,\cD_n}&=\pi(q)
\end{align*}
condition is met:
\begin{itemize}
\item[\rm(a)]
for $\nom$ 
\begin{itemize}
\item[\rm(1)]
$\wt U_n\supset \wt G_n \supset U_n \supset G_n \supset V_n \supset D_n \supset \wt V_n \supset \wt D_n \supset \wt U_{n+1} $;
\item[\rm(2)]
$\wt M_n= M_n\subset U_n \subset \wt U_n$;
\item[\rm(3)]
$\wt V_{n+1}\subset V_{n+1} \subset \wt V_n \subset V_n$.
\end{itemize}
\item[\rm (b)] if the family $\cW$ is monolithic and $\sqn{\wt V_n,\wt M_n}\in \cW$, then $\sqn{V_n,M_n}\in \cW$.
\end{itemize}
\end{assertion}

\begin{proof}
Let $q_\al$ and $s_\be$  be strategies for which (a) holds. Item (b) follows from the definition of monolithic and item (a).
On the first move, for $n=0$, the choice of players is:
\begin{center}
\begin{tabular}{ c | l | l}
strategy & choice & definition of choice \\ \hline
 $s_\al$ & $\wt U_0$ & \\
 $s_\ga$ & $\wt G_0, \wt \cG_0$ & \\
 $q_\al$ & $U_0$ & $U_0 =\wt U_0$ \\
$q_\ga$ & $G_0, \cG_0$ & \\
 $q_\be$ & $V_0, M_0$ & \\
 $q_\de$ & $D_0, \cD_0$ & \\
 $s_\be$ & $\wt V_0, \wt M_0$ & $\wt V_0=\lll(D_0,\wt\cG_0)$, $\wt M_0=M_0$\\
 $s_\de$ & $\wt D_0, \wt \cD_0$ & \\
\end{tabular}
\end{center}

On the $n$th move, for $n>0$, the choice of players is:
\begin{center}
\begin{tabular}{ c | l | l}
strategy & choice & definition of choice \\ \hline
 $s_\al$ & $\wt U_n$ & \\
 $s_\ga$ & $\wt G_n, \wt \cG_n$ & \\
 $q_\al$ & $U_n$ & $U_n = \lll(\wt G_n,\cD_{n-1})$ \\
 $q_\ga$ & $G_n, \cG_n$ & \\
 $q_\be$ & $V_n, M_n$ & \\
 $q_\de$ & $D_n, \cD_n$ & \\
 $s_\be$ & $\wt V_n, \wt M_n$ & $\wt V_n=\lll(D_n,\wt\cG_n)$, $\wt M_n=M_n$\\
 $s_\de$ & $\wt D_n, \wt \cD_n$ & \\
\end{tabular}
\end{center}
\end{proof}

\begin{assertion}\label{abme5}
Let $(s_\ga,s_\be,s_\de)\in S_\ga\times S_\be\times S_\de$. There is a strategy $q_\be\in S_\be$ such that for any $(q_\al,q_\ga,q_\de)\in S_\al \times S_\ga \times S_\de$ there exists $s_\al\in S_\al$ so for
$s=\iset{s_\ka:\ka\in P}$, $q=\iset{q_\ka:\ka\in P}$,
\begin{align*}
\sqn{\wt U_n,\wt G_n,\wt \cG_n,\wt V_n,\wt M_n,\wt D_n,\wt \cD_n}&=\pi(s),
\\
\sqn{U_n,G_n,\cG_n,V_n,M_n,D_n,\cD_n}&=\pi(q)
\end{align*}
condition is met:
\begin{itemize}
\item[\rm(a)]
for $\nom$ 
\begin{itemize}
\item[\rm(1)]
$U_n\supset G_n \supset \wt U_n \supset \wt G_n \supset \wt V_n \supset \wt D_n \supset V_n \supset D_n \supset U_{n+1} $;
\item[\rm(2)]
$\wt M_n= M_n\subset \wt U_n \subset U_n$;
\item[\rm(3)]
$V_{n+1}\subset \wt V_{n+1} \subset V_n \subset \wt V_n$;
\end{itemize}
\item[\rm (b)] if the family $\cW$ is monolithic and $\sqn{V_n,M_n}\in \cW$, then $\sqn{\wt V_n,\wt M_n}\in \cW$.
\end{itemize}
\end{assertion}

\begin{proof}
Let $s_\al$ and $q_\be$ be strategies for which (a) holds. Item (b) follows from the definition of monolithic and item (a).
On the first move, for $n=0$, the choice of players is:
\begin{center}
\begin{tabular}{ c | l | l}
strategy & choice & definition of choice \\ \hline
 $q_\al$ & $U_0$ & \\
$q_\ga$ & $G_0, \cG_0$ & \\
$s_\al$ & $\wt U_0$ & $\wt U_0=G_0=U_0$\\
 $s_\ga$ & $\wt G_0, \wt \cG_0$ & \\
 $s_\be$ & $\wt V_0, \wt M_0$ & \\
 $s_\de$ & $\wt D_0, \wt \cD_0$ & \\
 $q_\be$ & $V_0, M_0$ & $V_0=\lll(\wt D_0,\cG_0), M_0=\wt M_0$ \\
 $q_\de$ & $D_0, \cD_0$ & \\
\end{tabular}
\end{center}

On the $n$th move, for $n>0$, the choice of players is:
\begin{center}
\begin{tabular}{ c | l | l}
strategy & choice & definition of choice \\ \hline
 $q_\al$ & $U_n$ & \\
$q_\ga$ & $G_n, \cG_n$ & \\
$s_\al$ & $\wt U_n$ & $\wt U_n=\lll(G_n,\wt \cD_n)$\\
 $s_\ga$ & $\wt G_n, \wt \cG_n$ & \\
 $s_\be$ & $\wt V_n, \wt M_n$ & \\
 $s_\de$ & $\wt D_n, \wt \cD_n$ & \\
 $q_\be$ & $V_n, M_n$ & $V_n=\lll(\wt D_n,\cG_n), M_n=\wt M_n$ \\
 $q_\de$ & $D_n, \cD_n$ & \\
\end{tabular}
\end{center}
\end{proof}

%----------------------------------------------------------------------

\begin{theorem}\label{tbme1}
Let $(X,\tp)$ be a space, $\tps=\tp\setminus\sset\es$, $\pn$ be a $\pi$-net of $X$, $\Om\subset \tps$ and a family $\cW\subset \ww(X)$ be monolithic. Let $\gi=\ode(X,\pn,\cW;\Om)$.
\begin{itemize}
\item[\rm (1)] The game $\gi$ is  $\al$-favorable if and only if $\gi$ is $\sset{\al,\ga,\de}$-favorable.
\item[\rm (2)] The game $\gi$ is $\be$-favorable if and only if $\gi$ is $\sset{\ga,\be,\de}$-favorable.
\item[\rm (3)] The game $\gi$ is  $\al$-unfavorable if and only if $\gi$ is $\sset{\al,\ga,\de}$-unfavorable.
\item[\rm (4)] The  game $\gi$ is $\be$-unfavorable if and only if $\gi$ is $\sset{\ga,\be,\de}$-unfavorable.
\end{itemize}
\end{theorem}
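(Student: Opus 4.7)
The plan is to prove (1) and (2); parts (3) and (4) are their logical negations (since ``$\ka$-unfavorable'' is by definition the negation of ``$\ka$-favorable''), so (3) follows from (1) and (4) from (2) by contraposition. In each of (1) and (2), one implication is immediate from Assertion \ref{abme1}: since $\{\al\}\subset\{\al,\ga,\de\}$ and $\{\be\}\subset\{\ga,\be,\de\}$, a single-player winning strategy enlarges trivially to a coalition winning strategy by padding with arbitrary strategies for $\ga$ and $\de$, who are natura and never win. The substantive content is therefore the converse direction in each case.

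For the converse of (1), suppose $(s_\al,s_\ga,s_\de)$ is a winning coalition strategy for $\{\al,\ga,\de\}$. Because $\ga$ and $\de$ never win, the coalition winning against every $s_\be$ means that $\al$ wins the resulting $s$-play for every $s_\be$, i.e., the sequence produced by the $s$-play lies in $\cW$. I invoke Assertion \ref{abme4} on $(s_\al,s_\ga,s_\de)$ to obtain a candidate $\al$-strategy $q_\al\in S_\al$. Given arbitrary opponent strategies $(q_\ga,q_\be,q_\de)$, the assertion produces an $s_\be$ such that the $s$-play $\pi(s)$ and the $q$-play $\pi(q)$ are related by properties (a) and (b). Since $\al$ wins the $s$-play, property (b) (which leverages the monolithicity of $\cW$ together with the nested structure provided by (a)) transfers the winning condition directly to $\pi(q)$, establishing that $q_\al$ is $\al$-winning.

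For the converse of (2), the argument is symmetric, using Assertion \ref{abme5} in place of \ref{abme4}. Starting from a winning coalition strategy $(s_\ga,s_\be,s_\de)$ for $\{\ga,\be,\de\}$, one obtains a $\be$-strategy $q_\be$; for any $(q_\al,q_\ga,q_\de)$, the assertion supplies an $s_\al$ so that the two plays are linked by (a), (b) of \ref{abme5}. The $s$-play is a $\be$-win, so its associated sequence is not in $\cW$, and the contrapositive of property (b) of \ref{abme5} (which as stated reads $\sqn{V_n,M_n}\in\cW \Rightarrow \sqn{\wt V_n,\wt M_n}\in\cW$) transfers the $\be$-win to the $q$-play. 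The one delicate point throughout is matching up the winning condition across the two plays whose chains of open sets are different; this is exactly what the tight interleaving recorded in (a) together with monolithicity of $\cW$ (property (b)) is designed to handle, so no further technical input beyond Assertions \ref{abme4} and \ref{abme5} is required.
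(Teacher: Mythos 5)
Your proposal is correct and follows essentially the same route as the paper: reduce (3),(4) to (1),(2) by negation, get the trivial direction from Assertion \ref{abme1} together with the fact that $\ga$ and $\de$ are natura, and obtain the substantive direction by extracting $q_\al$ from Assertion \ref{abme4} (resp.\ $q_\be$ from Assertion \ref{abme5}) and transferring the win between the linked plays via property (b), using the contrapositive of (b) in case (2). No gaps.
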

\begin{proof}
Items (3) and (4) follow from (1) and (2).

Let us prove (1). By Assertion \ref{abme1}, it suffices to show that if the game $\gi$ is $\sset{\al,\ga,\de}$-favorable, then $\gi$ is $\al$-favorable. Let $\{\al\to s_\al, \ga\to s_\ga, \de\to s_\de \}$ be a $\sset{\al,\ga,\de}$-winning strategy. Let $q_\al\in S_\al$ be the strategy from Assertion \ref{abme4}. Let us show that $q_\al$ is a winning strategy for player $\al$. Let $(q_\ga,q_\be,q_\de)\in S_\ga\times S_\be \times S_\de$. Assertion \ref{abme4}\,(b) and the fact that $\cW$ is monolithic imply that there exists $s_\be\in S_\be$ such that  for
$s=\iset{s_\ka:\ka\in P}$, $q=\iset{q_\ka:\ka\in P}$,
\begin{align*}
\sqn{\wt U_n,\wt G_n,\wt \cG_n,\wt V_n,\wt M_n,\wt D_n,\wt \cD_n}&=\pi(s),
\\
\sqn{U_n,G_n,\cG_n,V_n,M_n,D_n,\cD_n}&=\pi(q)
\end{align*}
the following condition is satisfied: if $\sqn{\wt V_n,\wt M_n}\in \cW$, then $\sqn{ V_n, M_n}\in \cW$.
Since $\{\al\to s_\al, \ga\to s_\ga, \de\to s_\de \}$ is a $\sset{\al,\ga,\de}$-winning strategy, we have $\sqn{\wt V_n,\wt M_n}\in \cW$. Hence $\sqn{ V_n, M_n}\in \cW$, and player $\al$ wins with the strategy $q_\al$.

Let us prove (2). By Assertion \ref{abme1}, it suffices to show that if $\gi$ is $\sset{\ga,\be,\de}$-favorable, then $\gi$ is $\be$-favorable. Let $\{\ga\to s_\ga, \be\to s_\be, \de\to s_\de \}$ be a $\sset{\ga,\be,\de}$-winning strategy. Let $q_\be\in S_\be$ be the strategy from Assertion \ref{abme5}. Let us show that $q_\be$ is a winning strategy for player $\be$. Let $(q_\al,q_\ga,q_\de)\in S_\al\times S_\ga \times S_\de$. Assertion \ref{abme5}\,(b) and the fact that $\cW$ is monolithic imply that there exists $s_\al\in S_\al$ such that for
$s=\iset{s_\ka:\ka\in P}$, $q=\iset{q_\ka:\ka\in P}$,
\begin{align*}
\sqn{\wt U_n,\wt G_n,\wt \cG_n,\wt V_n,\wt M_n,\wt D_n,\wt \cD_n}&=\pi(s),
\\
\sqn{U_n,G_n,\cG_n,V_n,M_n,D_n,\cD_n}&=\pi(q)
\end{align*}
the following condition is satisfied: if $\sqn{V_n,M_n}\in \cW$, then $\sqn{\wt V_n,\wt M_n}\in \cW$.
Since $\{\ga\to s_\ga, \be\to s_\be, \de\to s_\de \}$ is a $\sset{\ga,\be,\de}$-winning strategy, we have $\sqn{\wt V_n,\wt M_n}\notin \cW$. Hence $\sqn{ V_n, M_n}\notin \cW$, and player $\be$ wins with the strategy $q_\be$.
\end{proof}
The definition of the game $\ode$ and Theorem \ref{tbme1} imply the following result.
\begin{theorem}\label{tbme1+1}
Let $(X,\tp)$ be a space, $\tps=\tp\setminus\sset\es$, $\pn$ be a $\pi$-net of $X$, $\Om\subset \tps$, $\cW\subset \ww(X)$,  and $K=\sset{\ga,\de}$.
Let $\gi=\ode(X,\pn,\cW;\Om)$.
Then the following assertions hold.
\begin{itemize}
\item[\rm (1)] The coalition $K$ is nature in the game $\gi$.
\item[\rm (2)] If the family $\cW$ is monolithic, then $K$ is a dummy coalition.
\end{itemize}
\end{theorem}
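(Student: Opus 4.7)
The plan is to verify the two claims directly from the definitions of ``nature'' and ``dummy'' coalition, with the real work for part~(2) already encapsulated in Theorem~\ref{tbme1}.

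For part~(1), the game $\gi = \ode(X,\pn,\cW;\Om)$ is specified so that players $\ga$ and $\de$ are declared to be nature; in other words $\nu_\ga \equiv 0$ and $\nu_\de \equiv 0$. Hence $\nu_K = \nu_\ga \vee \nu_\de \equiv 0$, which is exactly the definition of $K$ being a nature coalition. No further machinery is required.

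For part~(2), assuming $\cW$ is monolithic, I must show that for every $L \subset Q = \sset{\al,\be}$ the game $\gi$ is $L$-favourable if and only if it is $(L\cup K)$-favourable. The forward direction is automatic from Assertion~\ref{abme1}, so only the reverse needs checking, which I would do by enumerating the four subsets of $\sset{\al,\be}$. For $L = \sset\al$ this is precisely Theorem~\ref{tbme1}(1), and for $L = \sset\be$ it is Theorem~\ref{tbme1}(2); both invocations use monolithicity of $\cW$ in an essential way. The two remaining cases are degenerate: for $L = \es$, neither side can be favourable, since $\nu_\es \equiv 0$ vacuously and $K$ is itself nature by part~(1) so $\nu_K \equiv 0$; for $L = Q = \sset{\al,\be}$, both sides are trivially favourable, because by construction of $\ode$ player $\al$ wins precisely when $\sqn{U_n,M_n} \in \cW$ and player $\be$ wins otherwise, so $\nu_\al \vee \nu_\be \equiv 1$ and every joint strategy of $\sset{\al,\be}$, hence also of $P$, wins.

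The substantive content is thus carried entirely by Theorem~\ref{tbme1}, and the only anticipated obstacle is bookkeeping around the edge cases $L = \es$ and $L = Q$: one must be careful that the conventions on vacuous disjunctions and on the $\al$/$\be$ partition of outcomes used in Proposition~\ref{pbme1-1} and Assertion~\ref{abme1} really do force both sides to agree in these degenerate cases. Once this is verified, the four-case enumeration closes the proof.
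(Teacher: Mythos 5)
Your proposal is correct and follows essentially the same route as the paper, which states that the theorem ``follows from the definition of the game $\ode$ and Theorem \ref{tbme1}'': part (1) is immediate since $\ga$ and $\de$ are declared to be nature, and part (2) reduces to Theorem \ref{tbme1}(1)--(2) for $L=\sset\al$ and $L=\sset\be$, the remaining cases $L=\es$ and $L=\sset{\al,\be}$ being degenerate exactly as you describe. Your write-up merely makes explicit the edge-case bookkeeping that the paper leaves tacit.
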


\begin{assertion} \label{abme6-1}
Let $\ts=\{\ga\to\ts_\ga,\de\to\ts_\de\}\in \pwr\Ss_{\sset{\ga,\de}}$ be a strategy of the coalition $\sset {\ga,\de}$, and let $U\in \Om\subset \tps$. If the game $\ode(X,\pn,\cW;\Om_{BM})$ is $\al$-favorable, then there exists a winning strategy $s_\al$ for player $\al$ in the game $\ode(X,\pn,\cW;\Om)$ such that player $\al$ chooses $U_0=U$ on the first move, i.e., $U=s_\al(\es)$.
\end{assertion}

\begin{assertion} \label{abme6}
Let $U\in \Om\subset \tps$. If the game $\ode(X,\pn,\cW;\Om_{BM})$ is $\al$-favorable, then there exists a winning strategy $s_\al$ for player $\al$ in the game $\ode(X,\pn,\cW;\Om)$such that player $\al$ chooses $U_0=U$ on the first move, i.e. $U=s_\al(\es)$.
\end{assertion}
\begin{proof}
Let $\bar s_\al$ be a winning strategy for player $\al$ in the game $\ode(X,\pn,\cW;\Om_{BM})$. Let us define a winning strategy $s_\al$ for player $\al$  in the game $\ode(X,\pn,\cW;\Om)$.
On the first move, player $\al$ chooses $U_0=U$. On the $n$th move, player $\al$ chooses
\[
U_n=\bar s_\al(X,G_0,\cG_0,V_0,D_0,\cD_0,\dots ,U_{n-1},G_{n-1},\cG_{n-1},V_{ n-1},D_{n-1},\cD_{n-1}).
\]
\end{proof}

%----------------------------------------------------------------------

\begin{assertion} \label{abme7}
Let $(s_\al,s_\ga,s_\be)\in S_\al\times S_\ga\times S_\be$, $U=s_\al(\es)$, and $\pi_{BM} $ be the outcome function in the game $BM(U)$. There is a strategy $q_\be$ of player $\be$ in the game $BM(U)$ such any strategy $q_\al$ of player $\al$ in the game $BM(U)$ there exists a strategy $s_\de \in  S_\de$ such that 
$s=\iset{s_\ka:\ka\in P}$, $q=\iset{q_\ka:\ka\in \sset{\al,\be}}$,
\begin{align*}
\sqn{U_n,G_n,\cG_n,V_n,M_n,D_n,\cD_n}&=\pi(s),
\\
\sqn{\wt U_n,\wt V_n}&=\pi_{BM}(q).
\end{align*}
satisfy the condition
\begin{itemize}
\item[]
$\wt V_n= V_n$ for $\nom$ and $\wt U_0=U$, $\wt U_n=D_{n-1}$ for $n>0$.
\end{itemize}
\end{assertion}
\begin{proof}
Let us define the desired strategies $q_\al$ and $s_\de$.
On the first move, for $n=0$, we define the choice of players  as
\begin{center}
\begin{tabular}{ c | c | l | l}
move &strategy & choice & choice definition \\ \hline
0& $q_\al$ & $\wt U_0=U$ &  \\
0&$s_\al$ & $U_0=U$ & \\
0& $s_\ga$ & $ G_0, \cG_0$ & \\
0& $s_\be$ & $ V_0, M_0$ & \\
0& $q_\be$ & $\wt V_0$ & $\wt V_0=V_0$ \\
1& $q_\al$ & $\wt U_1$ &  \\
0& $s_\de$ & $D_0, \cD_0$ & $D_0=\wt U_1$, $\cD_0=\set{V\in\tps: V\subset D_0}$ \\
\end{tabular}
\end{center}

On the $n$th move, the choice of players is
\begin{center}
\begin{tabular}{ c | c | l | l}
move &strategy & choice & choice definition \\ \hline
n& $s_\al$ & $U_n$ & \\
n& $s_\ga$ & $ G_n, \cG_n$ & \\
n& $s_\be$ & $ V_n, M_n$ & \\
n& $q_\be$ & $\wt V_n$ & $\wt V_n=V_n$\\
n+1& $q_\al$ & $\wt U_{n+1}$ & \\
n& $s_\de$ & $D_n, \cD_n$ & $D_n=\wt U_{n+1}$, $\cD_n=\set{V\in\tps: V\subset D_n}$ \\
\end{tabular}
\end{center}
\end{proof}

\begin{theorem}\label{tbme2}
Let $(X,\tp)$ be a space, $\tps=\tp\setminus\sset\es$, $\pn$ be a $\pi$-net of $X$, $\Om\subset \tps$ and  $\cW\subset \ww(X)$.
Let $\gi=\ode(X,\pn,\cW;\Om)$, $\cW^*= \cW\cup \ww_e(X)$, $\gi^*=\ode(X,\pn,\cW^*;\Om_{BM})$,  and $\gi_{BM}=\bme(X;\Om)$. If $\gi^*$ is $\al$-favorable and $\gi_{BM}$ is $\be$-unfavorable, then $\gi$ is $\sset{\ga,\be}$-unfavorable.
\end{theorem}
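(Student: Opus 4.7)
The plan is to argue by contradiction: assume $\{\ga,\be\}$ admits a winning coalition strategy $\{\ga\to s_\ga,\be\to s_\be\}$ in $\gi$, and build strategies $s_\al\in S_\al$, $s_\de\in S_\de$ for the opposing coalition such that the resulting play of $\gi$ makes $\al$ win, contradicting that assumption (since $\ga$ is nature, a win for $\al$ is a loss for the entire coalition $\{\ga,\be\}$). The two preparatory ingredients I would extract from the hypotheses are: (i) Proposition~\ref{pbme1} applied to the assumption that $\bme(X;\Om)$ is $\be$-unfavorable produces some Baire $U\in\Om$; and (ii) the assumption that $\gi^*=\ode(X,\pn,\cW^*;\Om_{BM})$ is $\al$-favorable, combined with Assertion~\ref{abme6}, produces a winning strategy $s_\al$ for $\al$ in $\ode(X,\pn,\cW^*;\Om)$ with $s_\al(\es)=U$.

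Given the assumed $(s_\ga,s_\be)$, I would then feed the triple $(s_\al,s_\ga,s_\be)$ into Assertion~\ref{abme7} to obtain a strategy $q_\be$ for player $\be$ in the classical Banach--Mazur game $BM(U)$. Because $U$ is Baire, Theorem~\ref{tbmb-bm} gives that $BM(U)$ is $\be$-unfavorable, so $q_\be$ is not winning; hence there is a counter-strategy $q_\al$ for $\al$ in $BM(U)$ whose resulting play $\sqn{\wt U_n,\wt V_n}$ satisfies $\bigcap_\nom\wt V_n\neq\es$. Invoking the remaining clause of Assertion~\ref{abme7} then supplies $s_\de\in S_\de$ for which the play of $\gi$ produced by $\{\al\to s_\al,\ga\to s_\ga,\be\to s_\be,\de\to s_\de\}$ satisfies $V_n=\wt V_n$ for every $\nom$.

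To finish, I would combine the two outputs. On one hand, $s_\al$ being $\al$-winning in $\ode(X,\pn,\cW^*;\Om)$ forces $\sqn{U_n,M_n}\in\cW^*=\cW\cup\ww_e(X)$; on the other hand, the chain $U_n\supset V_n\supset U_{n+1}$ built into the $\ode$-rules, together with $\bigcap_\nom V_n=\bigcap_\nom\wt V_n\neq\es$, yields $\bigcap_\nom U_n\neq\es$, which rules out $\sqn{U_n,M_n}\in\ww_e(X)$. Therefore $\sqn{U_n,M_n}\in\cW$, i.e. $\al$ wins, producing the contradiction. The main obstacle, and the reason the whole four-player machinery was introduced, is precisely the $V_n=\wt V_n$ matching promised by Assertion~\ref{abme7}: it is what converts a purely Baire-type statement about $BM(U)$ into control on $\bigcap V_n$ inside $\gi$, and the delicate bookkeeping between the dense chain $U_n,G_n,V_n,D_n$ of $\gi$ and the coarser $\wt U_n,\wt V_n$ of $BM(U)$ is where the technical content of the proof lives.
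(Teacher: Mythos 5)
Your proposal is correct and follows essentially the same route as the paper's own proof: extract a Baire $U\in\Om$ via Proposition~\ref{pbme1}, use Assertion~\ref{abme6} to get an $\al$-winning strategy in $\ode(X,\pn,\cW^*;\Om)$ starting at $U$, transfer the coalition's play to $BM(U)$ via Assertion~\ref{abme7}, defeat it there by Banach--Oxtoby, and pull the nonempty intersection back to rule out the $\ww_e(X)$ alternative. The only cosmetic difference is that you phrase the argument as a contradiction while the paper directly exhibits, for each $(s_\ga,s_\be)$, counter-strategies $(s_\al,s_\de)$ under which $\al$ wins.
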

\begin{proof} Let $(s_\ga,s_\be)\in S_\ga\times S_\be$. We need to find strategies $(s_\al,s_\de)\in S_\al\times S_\de$ such that for
$s=\iset{s_\ka:\ka\in P}$, and
\begin{align*}
\sqn{ U_n, G_n, \cG_n, V_n, M_n, D_n, \cD_n}&=\pi(s)
\end{align*}
we have 
$\sqn{V_n,M_n}\in\cW$.

Since $\gi_{BM}$ is $\be$-unfavorable, it follows from Proposition \ref{pbme1} that there exists a Baire subspace $U\in\Om$.
Let $\bar s_\al$ be a winning strategy for player $\al$ in the game $\gi^*$. Assertion \ref{abme6} implies that there exists a winning strategy $s_\al$ for player $\al$  in the game $\ode(X,\pn,\cW^*;\Om)$ under which player $\al$ chooses $U_0=U$ on the first move. Let $q_\be$ be the strategy of player $\be$ in the game $BM(U)$ from Assertion \ref{abme7}. Since $U$ is a Baire space, it follows by the Banach--Oxtoby  Theorem \ref{tbmb-bm},  that the game $BM(U)$ is $\be$-unfavorable. Therefore, there is a strategy $q_\al$ of player $\al$ in the game $BM(U)$ such that for
$q=\iset{q_\ka:\ka\in \sset{\al,\be}}$  and 
\begin{align*}
\sqn{\wt U_n,\wt V_n}&=\pi_{BM}(q).
\end{align*}
we have
\[
\bigcap_\nom \wt U_n = \bigcap_\nom \wt V_n \neq \es.
\]
From Assertion \ref{abme7} it follows that  there exists an $s_\de\in S_\de$ such that 
$s=\iset{s_\ka:\ka\in P}$ and
\begin{align*}
\sqn{U_n,G_n,\cG_n,V_n,M_n,D_n,\cD_n}&=\pi(s)
\end{align*}
satisfy the condition
\begin{itemize}
\item[]
$\wt V_n= V_n$ for $\nom$ and $\wt U_0=U$, $\wt U_n=D_{n-1}$ for $n>0$.
\end{itemize}
Hence $\bigcap_\nom V_n\neq \es$ and $\sqn{V_n,M_n}\notin\ww_e(X)$. Since the strategy $s_\al$ is winning for $\al$ in the game $\ode(X,\pn,\cW^*;\Om)$, it follows that $\sqn{V_n,M_n}\in \cW^* =\cW\cup \ww_e(X)$. We obtain $\sqn{V_n,M_n}\in\cW$.
\end{proof}

%----------------------------------------------------------------------

\subsection{Relationship between the $OD$, $DO$ and $\ode$ games }
Let $\uu,\vu\in\UU(X)$.
We define a strategy $\ts_\ga$ for player $\ga$: at the $n$th step, player $\ga$ chooses $G_n=U_n$ and $\cG_n=\uu(G_n)$.
Let us define a strategy $\ts_\de$ for player $\de$: at the $n$th step player $\de$ chooses $D_n=V_n$ and $\cD_n=\vu(D_n)$.
The strategy $\ts=\{\ga\to\ts_\ga,\de\to\ts_\de\}\in \pwr\Ss_{\sset{\ga,\de}}$ is a strategy of the coalition $\sset {\ga,\de}$.
From the construction of games we see that the following assertion holds.
\begin{assertion}\label{abme8}
\begin{align*}
\ode(X,\pn,\cW;\Om_{BM})[\ts]&\sim OD(X,\pn,\cW;\uu,\vu),
\\
\ode(X,\pn,\cW;\Om_{MB})[\ts]&\sim DO(X,\pn,\cW;\uu,\vu),
\end{align*}
\end{assertion}

Let us define a strategy $\bar s_\ga$ for player $\ga$: at the $n$th step player $\ga$ chooses $G_n=U_n$ and $\cG_n=\set{U\in\tps: U\subset  G_n}$.
Let us define a strategy $\bar s_\de$ for player $\de$: at the $n$th step player $\de$ chooses $D_n=V_n$ and $\cD_n=\set{U\in\tps: U\subset D_n}$.
The strategy $\bar s=\{\ga\to\ts_\ga,\de\to\ts_\de\}\in \pwr\Ss_{\sset{\ga,\de}}$ is a strategy of the coalition $\sset{\ga,\de}$.
From the construction of games we see that the following assertion holds.
\begin{assertion}\label{abme9}
\begin{align*}
\ode(X,\pn,\cW;\Om_{BM})[\bar s]&\sim OD(X,\pn,\cW),
\\
\ode(X,\pn,\cW;\Om_{MB})[\bar s]&\sim DO(X,\pn,\cW),
\end{align*}
\end{assertion}
Assertions \ref{abme8} and \ref{abme9}, Theorem \ref{tbme1+1} and Proposition \ref{pbme1-1} imply the following proposition.
\begin{proposition}[Proposition \ref{pbmg2}] \label{pbme2}
Let $X$ be a space, $\pn$ be a $\pi$-net of $X$, $\cW\subset \ww(X)$, and $\uu,\vu\in\UU(X)$.
If $\cW$ is a monolithic family, then
\begin{align*}
OD(X,\pn,\cW)&\sim OD(X,\pn,\cW;\uu,\vu),
\\
DO(X,\pn,\cW)&\sim DO(X,\pn,\cW;\uu,\vu).
\end{align*}
\end{proposition}

\begin{proposition}[Proposition \ref{pbmb3}] \label{pbme3}
Let $X$ be a space, $\pn$ be a $\pi$-net $X$, and $\cW\subset\ww(X)$ be a monolithic family. Suppose that $X$ is a $\god(\pn,\cW)$-space.  Then the following assertions hold.
\begin{itemize}
\item[\rm(1)]
If $X$ is nonmeager, then $X$ is $\god(\pn,\cW))$-nonmeager.
\item[\rm(2)]
If $X$ is Baire, then $X$ is $\god(\pn,\cW)$-Baire.
\end{itemize}
\end{proposition}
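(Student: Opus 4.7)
The plan is to reduce both parts to a single application of Theorem \ref{tbme2} — using $\Om=\Om_{BM}$ for (2) and $\Om=\Om_{MB}$ for (1) — and then transport the conclusion back to $OD(X,\pn,\cW)$ or $DO(X,\pn,\cW)$ through the equivalence $\gi[\bar s]\sim OD(X,\pn,\cW)$ of Assertion \ref{abme9}. A preparatory observation is that $\cW^*=\cW\cup\ww_e(X)$ is monolithic: $\ww_e(X)$ is monolithic because $U_{n+1}\subset V_n$ forces $\bigcap_n U_n\subset\bigcap_n V_n$, and a finite union of monolithic families is visibly monolithic. This lets me apply Theorem \ref{tbme1} to the auxiliary game $\gi^*=\ode(X,\pn,\cW^*;\Om_{BM})$ further on.

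The two hypotheses of Theorem \ref{tbme2} should fall out quickly. First, $\gi^*$ is $\al$-favourable: by assumption $X$ is a $\god(\pn,\cW)$-space, so $OD(X,\pn,\cW^*)\sim\gi^*[\bar s]$ (Assertion \ref{abme9}) is $\al$-favourable; coupling the winning $\al$-strategy with $\bar s$ for the coalition $\sset{\ga,\de}$ yields a winning $\sset{\al,\ga,\de}$-strategy in $\gi^*$, which Theorem \ref{tbme1}(1) — applicable because $\cW^*$ is monolithic — promotes to $\al$-favourability of $\gi^*$ itself. Second, $\gi_{BM}=\bme(X;\Om)$ is $\be$-unfavourable: by Proposition \ref{pbme1} this amounts to some $U\in\Om$ being Baire, which is $X$ itself in case (2) and some nonempty open subspace in case (1) — exactly the stated hypotheses.

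Theorem \ref{tbme2} then delivers that $\gi=\ode(X,\pn,\cW;\Om)$ is $\sset{\ga,\be}$-unfavourable, and hence $\be$-unfavourable by Assertion \ref{abme1}. The last step pushes this down to $\gi[\bar s]$. If $\gi[\bar s]$ were $\be$-favourable with some winning strategy $s_\be$, then $(\bar s_\ga,s_\be,\bar s_\de)$ would be a winning $\sset{\ga,\be,\de}$-coalition strategy in $\gi$, and by Theorem \ref{tbme1}(2) — using monolithicity of $\cW$ here — $\gi$ would be $\be$-favourable, contradicting what was just obtained. Hence $\gi[\bar s]$ is $\be$-unfavourable, which by Assertion \ref{abme9} is exactly the desired $\be$-unfavourability of $OD(X,\pn,\cW)$ in case (2) or $DO(X,\pn,\cW)$ in case (1).

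The delicate point to watch is the two-way transit between the four-player game $\gi$ and its two-player restriction $\gi[\bar s]$: Theorem \ref{tbme1} applied to $\cW^*$ at the top and to $\cW$ at the bottom is precisely what converts the auxiliary-coalition strategies obtained by fixing $\ga,\de=\bar s$ into genuine single-player $\al$- or $\be$-strategies, and the monolithicity of each family is what keeps that machinery available. Once those equivalences are lined up, the argument reduces to one invocation of Theorem \ref{tbme2} plus routine bookkeeping.
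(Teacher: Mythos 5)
Your proposal is correct and follows essentially the same route as the paper: establish $\al$-favourability of $\ode(X,\pn,\cW^*;\Om_{BM})$ via Assertion \ref{abme9} and Theorem \ref{tbme1}, feed this together with $\be$-unfavourability of $\bme(X;\Om)$ (with $\Om=\Om_{MB}$ or $\Om_{BM}$) into Theorem \ref{tbme2}, and transfer the resulting $\sset{\ga,\be}$-unfavourability back to $DO$ resp.\ $OD$ through Theorem \ref{tbme1} and Assertion \ref{abme9}. Your explicit check that $\cW^*=\cW\cup\ww_e(X)$ is monolithic is a detail the paper leaves implicit, and your contrapositive phrasing of the last step is the same argument in a different dress.
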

\begin{proof}
Let $\cW^*= \cW\cup \ww_e(X)$. Assertion \ref{abme9}, Theorem \ref{tbme1}, and the fact that $X$ is a $\god(\pn,\cW)$-space imply that the game $\ode(X,\pn,\cW^ *;\Om_{BM})$  is $\al$-favorable.

Let us prove (1). Assertion \ref{abme3} and the Banach--Oxtoby Theorem \ref{tbmb-bm} imply that $X$ is nonmeager if and only if the game $\bme(X;\Om_{MB})$ is $\be$-unfavorable. Theorem \ref{tbme2} implies that the game $\ode(X,\pn,\cW;\Om_{MB})$ is $\sset{\ga,\be}$-unfavorable and, moreover, $\be$-unfavorable. Theorem \ref{tbme1} implies that the game $\ode(X,\pn,\cW;\Om_{MB})$ is $\sset{\ga,\be,\de}$-unfavorable. Assertion \ref{abme9} implies that the game $DO(X,\pn,\cW)$ is $\be$-unfavorable, i.e., $X$ is a $\god(\pn,\cW))$-nonmeager space.

Let us prove (2). It follows from Assertion \ref{abme3} and the Banach-Oxtoby Theorem \ref{tbmb-bm} that $X$ is Baire if and only if the game $\bme(X;\Om_{BM})$ is $\be$-unfavorable. Theorem \ref{tbme2} implies that the game $\ode(X,\pn,\cW;\Om_{BM})$ is $\sset{\ga,\be}$-unfavorable and, moreover, $\be$-unfavorable. Theorem \ref{tbme1} implies that the game $\ode(X,\pn,\cW;\Om_{BM})$ is $\sset{\ga,\be,\de}$-unfavorable. Assertion \ref{abme9} implies that the game $OD(X,\pn,\cW)$ is $\be$-unfavorable, i.e. $X$ is a $\god(\pn,\cW))$-nonmeager space.
\end{proof}

%%% end file('../en/in/bme.tex') %%%
%%% begin file('../en/in/qe1.tex') %%%
\section{Examples and questions}\label{sec-qe}

In this section, we study how different the introduced classes of spaces are.

The following diagram shows the relationship of the most interesting classes of spaces.

Any arrow $\begin{tikzcd} A \arrow[r] & B \end{tikzcd}$ means that any $A$-Baire space is a $B$-Baire space and  the converse is not true.

The diagram follows from Proposition \ref{pbmb4}. Counterexamples will be constructed below.

{
\def\b#1{\gbm_{#1}}
\def\o#1#2{\god_{#1,#2}}
\def\a#1{\arrow[#1]}
\def\na#1{\arrow[#1,dashrightarrow]}
\def\nna#1{\arrow[#1,tail, dashed]}
\[
\begin{tikzcd}
\b o \a{ddr} && \b p \a{dd} \a{ll} &\b k \a{d} \a{l} &
\\
\o ok \a{dr} &&& \o pk \a{lll}\a{dl} &
\\
&\o ol & \o pl \a l & & \b f \a{uul}
\end{tikzcd}
\]
}

%\[A\rightsquigarrow B\]

We denote by $\B$ the discrete two-point space $\sset{0,1}$. The base of the topology in $\B^C$ is formed by sets of the form
\[
W(A,B,C)\eqdef \set{\sq{x_\al}{\al\in C}\in \B^C: x_\al=0 \text{ for }\al\in A\text{ and }x_\be=1 \text{ for }\be\in B }
\]
for finite disjoint $A,B\subset C$.

Propositions \ref{pbms1}, \ref{pbmb4q} and
Theorems \ref{tbms1}, \ref{tbms2}
imply the following assertion.
\begin{assertion} \label{aqe1}
Let $X$ be a regular space without isolated points.
\begin{itemize}
\item[\rm(1)]
If $X$ is compact, then $X$ is $\gbm_k$-Baire.
\item[\rm(2)]
If $X$ is $\god_{o,k}$-nonmeager, then $X$ contains an infinite compact set.
\item[\rm(3)]
If $X$ is countably compact, then $X$ is $\gbm_p$-Baire.
\item[\rm(4)]
If $X$ is $\god_{p,l}$-nonmeager, then $X$ contains a non-discrete countable space.
\item[\rm(5)]
If $X$ is pseudocompact, then $X$ is $\gbm_o$-Baire.
\item[\rm(6)]
If $X$ is $\gbm_f$-nonmeager, then $X$ contains points with a countable base of neighborhoods.
\item[\rm(7)]
If $X$ is $\gbm_o$-nonmeager, then $X$ contains $q_o$-points.
\item[\rm(8)]
If $X$ is a product of locally compact spaces (for example, $X=\R^\tau$), then $X$ is $\god_{p,k}$-Baire.
\end{itemize}
\end{assertion}

Let us give examples that distinguish the classes of spaces under consideration.
In the examples below, \bitem B,A. $X$ means that $X$ is an $A$-Baire space that is not $B$-nonmeager.

\begin{example}\label{eqe1}
\bitem \god_{p,l},\gbm_o.
$X_p$. Indeed, if $X_p$ is an infinite pseudocompact space without isolated points, in which all countable subsets are discrete and closed \cite{Shakhmatov1986,rezn1989}, 
\edemo
then the claim follows from Assertion \ref{aqe1} (5) and (4).
\endedemo
\end{example}

\begin{example}\label{eqe2}
\bitem \god_{o,k},\gbm_p. $X_c$.
If $X_c$ is an infinite countably compact space without isolated points that does not contain infinite compact spaces, for example, $X_c=X\setminus\om$, where $X$ is a countably compact dense subspace of $\be\om$ of cardinality $2^\om$ (see \cite[Proposition 16]{rezn2020}), 
\edemo
then the claim follows from
Assertion \ref{aqe1} (3) and (2).
\endedemo
\end{example}

\begin{example}\label{eqe3}
\bitem\gbm_f,\gbm_k.
$\B^{\om_1}$.
\edemo
The space $\B^{\om_1}$ is a compact space without points of countable character (see Assertion \ref{aqe1} (1) and (6)).
\endedemo
\end{example}

\begin{example}\label{eqe5}
\bitem \god_{p,k},\god_{o,k}. $Y$.
Consider
\begin{align*}
Y_0&=\set{\sq{x_\al}{\al<\om_1}\in\B^{\om_1}: |\set{\al<\om_1: x_\al=0}|\leq \om },
\\
Y_1&=\set{\sq{x_\al}{\al<\om_1}\in\B^{\om_1}: |\set{\al<\om_1: x_\al=1}|< \om} ,
\\
Y&=Y_0\cup Y_1.
\end{align*}
\edemo
Let us show that $OD_{o,k}(Y)$ is $\al$-favorable. A winning strategy for $\al$ is as follows. We choose $U_n$ such that $\cl{U_n}\subset V_{n-1}$. Let $x_n\in M_n\cap Y_0$. Then $K=\cl{\set{x_n:n<\om}}$ is compact.

Let us show that $DO_{p,k}(Y)$ is $\be$-favorable. A winning strategy for $\be$ is as follows. Choose $V_n=W(A_n,B_n,\om_1)$ such that $A_n\subset A_{n-1}$, $B_n\subset B_{n-1}$ and $|A_n|\geq n$, $x_n\in V_n \cap Y_1$, $M_n=\sset{x_n}$. Then $\sqnn x$ is a discrete and closed sequence in $Y$.
\endedemo
\end{example}

%%%%%%%%%%%%%%%%%%%%%%%%%%%%%

\begin{problem} \label{pqe2}
Let $\ggg\in \set{\gbm_r: r\in\sset{f,k,p,o}}\cup \set{\god_{t,q}: t\in\sset{o,p} \text{ and }q\in\sset{l,k}}$.
\begin{itemize}
\item[{\rm (1)}]
Does there exist a $\ggg$-Baire space that is not a $\ggg$-space?
\item[{\rm (2)}]
Is the class of $\ggg$-spaces multiplicative?  That is it true that if $X,Y\in\ggg$, then $X\times Y\in\ggg$?
\item[{\rm (3)}]
Let $X$ and $Y$ be $\ggg$-Baire spaces, and let $X\times Y$ be a Baire space. Is it true that $X\times Y$ is a $\ggg$-Baire space?
\end{itemize}
\end{problem}

The smallest class of spaces among those listed above is the class of $\gbm_f$-spaces, and the largest one is  the class of $\god_{o,l}$-spaces.

\begin{problem} \label{pqe4}
Let $X$ be a regular $\gbm_f$-Baire space.
\begin{itemize}
\item[{\rm (1)}]
Is it true that $X$ is a $\gbm_f$-space and contains a dense metrizable Baire subspace?
\item[{\rm (2)}]
Is it true that $X$ is a $\god_{o,l}$-space?
\end{itemize}
\end{problem}

A space $X$ is called \term{weakly pseudocompact} if there exists a compact Hausdorff extension $bX$ of the space $X$ in which the space $X$ is $G_\de$-dense, i.e., $X$ intersects any nonempty $G_\de$ subset of $bX$ \cite{arh-rezn2005}. It is clear that the product of weakly pseudocompact spaces is weakly pseudocompact; in particular, the product of pseudocompact spaces is weakly pseudocompact.

The next question is a version of Problem \ref{pqe2} (2) and (3).

\begin{problem} \label{pqe5}
Let $X$ be a weakly pseudocompact space (a product of pseudocompact spaces). Which of the following classes does $X$ belong to:
\begin{itemize}
\item[]
$\gbm_o$-spaces, $\gbm_o$-Baire spaces, $\god_{o,l}$-spaces, $\god_{o,l}$-Baire spaces?
\end{itemize}
\end{problem}

\begin{problem} \label{pqe7}
Let $X$ and $Y$ be (completely) regular countably compact spaces. Which of the following classes does the product $X\times Y$ belong to:
\begin{itemize}
\item[]
$\gbm_p$-spaces, $\gbm_p$-Baire spaces, $\god_{p,l}$-spaces, $\god_{p,l}$-Baire spaces, $\gbm_o$-spaces, $\gbm_o $-Baire spaces, $\god_{o,l}$-spaces, $\god_{o,l}$-Baire spaces?
\end{itemize}
\end{problem}

Recall that a group with a topology in which multiplication is continuous is called a \term{paratopological} group.
A group with a topology in which multiplication is separately continuous is called a \term{semitopological} group.
In \cite[Theorem 2.6]{Reznichenko1994} it is proved that a pseudocompact paratopological group is a topological group.
Every weakly pseudocompact semitopological group G of countable $\pi$-character is a topological group metrizable by a complete metric (see \cite[Corollary 2.28]{arh-rezn2005}).

\begin{problem} \label{pqe6}
Let $G$ be a weakly pseudocompact (a product of pseudocompact spaces, a product of two countably compact spaces) paratopological group. Is it true that $G$ is a topological group?
\end{problem}

Note that \cite[Theorem 14]{rezn2022-1} implies that a paratopological $\god_{o,l}$-Baire group is a topological group.

%%% end file('../en/in/qe1.tex') %%%

%\bibliographystyle{amsplain}
\bibliographystyle{elsarticle-num}
\bibliography{gbtg}
\end{document}